\colorlet{Darkyellow}{yellow!80!magenta}
\newtheorem*{rep@theorem}{\rep@title}
\newcommand{\newreptheorem}[2]{%
\newenvironment{rep#1}[1]{%
 \def\rep@title{#2 \ref{##1}}%
 \begin{rep@theorem}}%
 {\end{rep@theorem}}}
\def\blfootnote{\xdef\@thefnmark{}\@footnotetext}
\theoremstyle{plain} 
\newtheorem{theorem}{Theorem}[section]
\newtheorem{lemma}[theorem]{Lemma}
\newtheorem{cor}[theorem]{Corollary}
\newtheorem*{theorem*}{Theorem}
\newtheorem*{lemma*}{Lemma}
\newtheorem*{cor*}{Corollary}
\theoremstyle{definition} 
\newtheorem{definition}[theorem]{Definition}
\newtheorem{notation}[theorem]{Notation}
\newtheorem{example}[theorem]{Example}
\newtheorem{remark}[theorem]{Remark} 
\newtheorem*{definition*}{Definition}
\newtheorem*{notation*}{Notation}
\newtheorem*{example*}{Example}
\newtheorem*{remark*}{Remark}
\newtheorem*{question*}{Question}
\DeclareMathOperator{\Out}{Out}
\DeclareMathOperator{\Aut}{Aut}
\DeclareMathOperator{\Inn}{Inn}
\DeclareMathOperator{\Spe}{Spe}
\DeclareMathOperator{\cat}{CAT}
\DeclareMathOperator{\Isom}{Isom}
\DeclareMathOperator{\GL}{GL}
\DeclareMathOperator{\HTc}{\mathcal{HT}}
\DeclareMathOperator{\HT}{\mathrm{HT}}
\DeclareMathOperator{\Kc}{\mathcal{K}}
\DeclareMathOperator{\K}{\mathrm{K}}
\DeclareMathOperator{\Lk}{Lk} 
\DeclareMathOperator{\St}{St} 
\DeclareMathOperator{\LP}{(}
\DeclareMathOperator{\RP}{)}
\newcommand\numberthis{\addtocounter{equation}{1}\tag{\theequation}}
\title[Geometry of OUT($W_n$)]{The Geometry of Outer Automorphism Groups of Universal Right-Angled Coxeter Groups}
\author{Charles Cunningham}
\let\oldtocsection=\tocsection
\let\oldtocsubsection=\tocsubsection
\renewcommand{\tocsection}[2]{\hspace{0em}\oldtocsection{#1}{#2}}
\renewcommand{\tocsubsection}[2]{\hspace{1.1em}\oldtocsubsection{#1}{#2}}
\begin{document}

\begin{abstract}
We investigate the combinatorial and geometric properties of automorphism groups of universal right-angled Coxeter groups, which are the automorphism groups of free products of copies of $\mathbb{Z}_2$. It is currently an open question as to whether or not these automorphism groups have non-positive curvature. Analogous to Outer Space as a model for $\Out(F_n)$, we prove that the natural combinatorial and topological model for their outer automorphism groups can \emph{not} be given an equivariant $\cat(0)$ metric. This is particularly interesting as there are very few non-trivial examples of proving that a model space of independent interest is \emph{not} $\cat(0)$. 
\end{abstract}

\maketitle

\tableofcontents


\section{Introduction}\label{sec:intro}\blfootnote{Portions of this paper originally appeared in the author's Ph.D. thesis \cite{mythesis}.}
Right-angled Coxeter groups are a robust and interesting class of examples of non-positively curved groups that generalize groups generated by reflections in Euclidean and hyperbolic spaces. They are all known to be $\cat(0)$, a form of non-positive curvature that is shared by free groups, free abelian groups, and fundamental groups of closed Euclidean and hyperbolic manifolds. But while the automorphism groups $\Aut(F_n)$ and $\Out(F_n)$ for $n \geq 3$ are known to \emph{not} be $\cat(0)$ themselves \cite{Gersten,BV}, this is still open for the analogous automorphism groups of the universal right-angled Coxeter groups $\Aut(W_n)$ and $\Out(W_n)$, where $W_n$ is a free product of copies of $\mathbb{Z}_2$ instead of $\mathbb{Z}$. 

There is a natural contractible combinatorial and topological model of $\Out(F_n)$ called \emph{Outer Space}, $X_n$, in the sense that the simplicial automorphisms of $X_n$ are precisely $\Out(F_n)$ \cite{BV2}, although Bridson \cite{Bridson} showed that $X_n$ could not be given an $\Out(F_n)$-equivaraint $\cat(0)$ metric (before it was known that $\Out(F_n)$ isn't a $\cat(0)$ group at all). 

We now find ourselves in an analogous position in the right-angled Coxeter case. There is a natural contractible combinatorial and topological model of $\Out(W_n)$ called \emph{McCullough-Miller Space}, $K_n$, \cite{MM} in the sense that the simplicial automorphisms of $K_n$ are precisely $\Out(W_n)$ \cite{Piggott}, and it was unknown whether or not this space could be given an equivariant $\cat(0)$-metric. 

Following Bridson, we prove the following theorem:

\begin{reptheorem}{thm:noout0n}
There does not exist an $\Out^0(W_n)$-equivariant (or $\Out(W_n)$-equivariant) piecewise Euclidean (or piecewise hyperbolic) $\cat(0)$ $\LP\cat(-1)\RP$ metric on $\K_n$ for $n \geq 4$. 
\end{reptheorem}

It is thus still an open question as to whether or not $\Out(W_n)$ and $\Aut(W_n)$ are $\cat(0)$ groups, but another model space will be needed if they are. 

\section{Background}\label{sec:background}
A \emph{$\cat(0)$ metric space} is a geodesic metric space such that geodesic triangles are no fatter than corresponding Euclidean triangles with the same side lengths. If a finitely generated group $G$ acts on a $\cat(0)$ space $X$ properly discontinuously, co-compactly, and by isometries, then $G$ is called a \emph{CAT(0) group}. The standard reference on $\cat(0)$ groups and spaces is \cite{BH}. 

$\cat(0)$ groups are a generalized notion of non-positive curvature for groups. Unlike Gromov's $\delta$-hyperbolic groups, the property of being a $\cat(0)$ group is not a quasi-isometric invariant \cite{KL,BH}. Furthermore, even if a group has a natural geometric model, the failure of that model to be $\cat(0)$ doesn't preclude the possibility of the group acting geometrically on a different metric space which is $\cat(0)$. Thus, it can be a more subtle question to determine when a group is $\cat(0)$ or not.

In the 1930s, H.S.M. Coxeter introduced abstract Coxeter groups as a generalization of groups generated by geometric reflections. Their subsequent study has connected many areas of algebra, geometry, and combinatorics. 

\begin{definition*}
Given a finite simple graph $\Gamma$, the \emph{right-angled Coxeter group defined by $\Gamma$} is the group $W = W_{\Gamma}$ generated by the vertices of $\Gamma$.  The relations of $W_{\Gamma}$ declare that the generators all have order 2, and adjacent vertices in $\Gamma$ commute with each other. 
\end{definition*}

Right-angled Coxeter groups (commonly abbreviated RACGs) have a rich combinatorial and geometric history. They each act properly discontinuously and cocompactly by isometries on a metric space, called a Davis complex \cite{Davis}. Gromov \cite{Gromov} showed this space to be $\cat(0)$ for RACGs, and Moussong showed \cite{Moussong} that all Coxeter groups are in fact $\cat(0)$ groups.

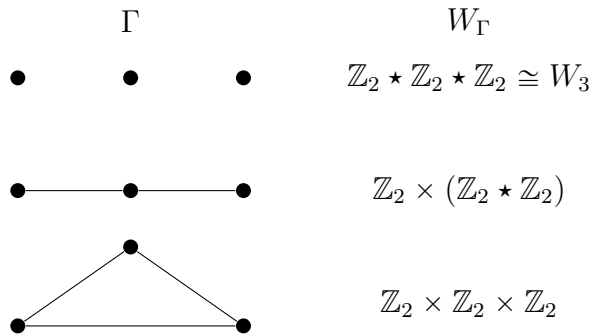
\begin{figure}[h]
\begin{center}
\begin{tikzpicture}[scale=1.5]

\node at (-2,0.5) {$\Gamma$};
\node at (1,0.5) {$W_\Gamma$};

\node (a1) [circle,fill,inner sep=2pt] at (-3,0) {};
\node (a2) [circle,fill,inner sep=2pt] at (-2,0) {};
\node (a3) [circle,fill,inner sep=2pt] at (-1,0) {};

\node at (1,0) {$\mathbb{Z}_2 \star \mathbb{Z}_2 \star \mathbb{Z}_2 \cong W_3$};

\node (b1) [circle,fill,inner sep=2pt] at (-3,-1) {};
\node (b2) [circle,fill,inner sep=2pt] at (-2,-1) {};
\node (b3) [circle,fill,inner sep=2pt] at (-1,-1) {};

\draw (b1) to (b2);
\draw (b2) to (b3); 

\node at (1,-1) {$\mathbb{Z}_2 \times \left(\mathbb{Z}_2 \star \mathbb{Z}_2\right)$};

\node (c1) [circle,fill,inner sep=2pt] at (-3,-2.2) {};
\node (c2) [circle,fill,inner sep=2pt] at (-2,-1.5) {};
\node (c3) [circle,fill,inner sep=2pt] at (-1,-2.2) {};

\draw (c1) to (c2);
\draw (c2) to (c3); 
\draw (c1) to (c3);

\node at (1,-2) {$\mathbb{Z}_2 \times \mathbb{Z}_2 \times \mathbb{Z}_2$};

\end{tikzpicture}
\caption[Defining Graphs of RACGs]{Some examples of defining graphs $\Gamma$ and their RACGs $W_\Gamma$.}
\label{fig:RACG}
\end{center}
\end{figure}

The combinatorial nature of RACGs makes them useful in studying their $\cat(0)$ geometry as they admit a biautomatic structure as well as a geodesic normal form. Thus, they have effective solutions to the word and conjugacy problems. They are also \emph{rigid}, which means a given RACG cannot arise from two different defining graphs \cite{Green,Droms,Laurence,Radcliffe}. Thus, all of the combinatorial information of the group is contained in the graph $\Gamma$. 

\begin{example*}
One particularly interesting class of examples is the universal right-angled Coxeter groups, $W_n$, whose defining graph is the empty graph on $n$ vertices. For instance, the group

$W_4 = \left\langle a_1, a_2, a_3, a_4 \mid a_1^2 = a_2^2 = a_3^2 = a_4^2 = 1 \right\rangle$ is a right-angled Coxeter group and so is $\cat(0)$. Note that $\displaystyle W_n \cong \Asterisk_{i=1}^n \mathbb{Z}_2$. 
\end{example*}

The automorphisms of right-angled Coxeter groups are generated by automorphisms that come in three varieties \cite{CRSV,Green,Lauthesis}:
\begin{enumerate}
\item Graph symmetries, which are automorphisms of $W_{\Gamma}$ induced by graph automorphisms of $\Gamma$. For instance, if two vertices of $\Gamma$ are adjacent to the same set of vertices, then $W_{\Gamma}$ has an automorphism which exchanges those two generators and leaves all other generators fixed.

\item Partial Conjugations, which conjugate a certain set of generators, $D$, by a particular generator $a_i$ while leaving all other generators fixed. The combinatorics of $\Gamma$ constrain which subsets $D$ of the generators result in automorphisms of $W_{\Gamma}$ for each $a_i$. 

\item Transvections, which send $a_i$ to $a_i a_j$ for a particular pair of generators and leave all other generators fixed. 
\end{enumerate}

\begin{definition}\label{def:pc}
Following \cite{Piggott}, we denote by $x_{i,D}$ the partial conjugation of $W_\Gamma$ defined by:
\[
a_j \mapsto 
\begin{cases}
a_i a_j a_i &\mbox{if } j \in D \\
a_j &\mbox{if } j \in [n] \setminus D
\end{cases}
\]
We call $x_{i,D}$ the \emph{partial conjugation with acting letter $a_i$ and domain $D$}.

If $\St(a_i)$ is the star of the vertex $a_i$ in $\Gamma$, then $x_{i,D}$ is an automorphism of $W_n$ if and only if $D$ is a union of connected components of $\Gamma \setminus \St(a_i)$. 

When $D$ is a single connected component of $\Gamma \setminus \St(a_i)$, we follow \cite{CEPR} and call $x_{i,D}$ an \emph{elementary partial conjugation}. 
\end{definition}

Any automorphism of a group must send involutions to involutions, and the only involutions of $W_\Gamma$ are conjugates of commuting products of its generators \cite{Bourbaki}. Furthermore, no commuting products of generators are conjugate to one another in $W_\Gamma$ \cite{Davis}, and so any automorphism of $W_\Gamma$ must permute the conjugacy classes of commuting products of the generators. Thus, $\Aut(W_\Gamma)$ acts on the set of conjugacy classes of commuting products of the generators, whose kernel is denoted $\Aut^0 (W_\Gamma)$.

\begin{definition*}
$\Aut^0(W_\Gamma)$ consists of all automorphisms of $W_\Gamma$ that map each vertex to a conjugate of itself. 
\end{definition*}

$\Aut^0(W_\Gamma) \lhd \Aut(W_\Gamma)$ is generated by the set of all partial conjugations or the set of all elementary partial conjugations \cite{Muhlherr,Lauthesis}. 

The quotient of $\Aut^0(W_\Gamma)$ by the inner automorphisms gives a subgroup $\Out^0(W_\Gamma)$ of the full outer automorphism group. This quotient splits, and $\Out^0(W_\Gamma)$ is isomorphic to a subgroup of the full automorphism group. In fact, a full decomposition of the automorphism group was given in \cite{GPR}:

\begin{theorem*}[Gutierrez-Piggott-Ruane]
\[\Aut(W_\Gamma) = \underbrace{\left( \Inn \left(W_\Gamma \right) \rtimes \Out^0 \left(W_\Gamma \right) \right)}_{\Aut^0 \left(W_\Gamma \right)} \rtimes \Aut^1 \left(W_\Gamma \right)\] 
\end{theorem*}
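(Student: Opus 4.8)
The plan is to establish the two semidirect-product decompositions separately: the \emph{outer} split $\Aut(W_\Gamma) = \Aut^0(W_\Gamma)\rtimes\Aut^1(W_\Gamma)$, and then, inside the normal factor, the \emph{inner} split $\Aut^0(W_\Gamma) = \Inn(W_\Gamma)\rtimes\Out^0(W_\Gamma)$. For this I take $\Aut^1(W_\Gamma)$ to be the subgroup generated by the graph symmetries and the transvections from the generating set recalled above; the content of the theorem is then that this subgroup is a complement to $\Aut^0(W_\Gamma)$, and that the earlier-observed inclusion $\Inn(W_\Gamma)\le\Aut^0(W_\Gamma)$ is itself split.

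For the outer split, recall that $\Aut^0(W_\Gamma)$ is by definition the kernel of the action of $\Aut(W_\Gamma)$ on the finite set $\mathcal{C}$ of conjugacy classes of commuting products of generators, hence normal. Since the three families of generators distribute as ``partial conjugations in $\Aut^0(W_\Gamma)$; graph symmetries and transvections in $\Aut^1(W_\Gamma)$'' and $\Aut^0(W_\Gamma)$ is normal, $\Aut(W_\Gamma) = \Aut^0(W_\Gamma)\,\Aut^1(W_\Gamma)$ is immediate. It then remains to see $\Aut^0(W_\Gamma)\cap\Aut^1(W_\Gamma) = 1$, i.e.\ that a nonidentity element of $\Aut^1(W_\Gamma)$ induces a nontrivial permutation of $\mathcal{C}$: one uses that distinct generators of $W_\Gamma$ are nonconjugate, that a nonidentity graph symmetry moves some class $[a_i]$ to a different length-one class, and that a transvection $a_i\mapsto a_ia_j$ sends $[a_i]$ to the class of a strictly longer commuting product, and then checks that these effects cannot cancel in a nonempty product by tracking the induced permutation on all of $\mathcal{C}$, not only on its length-one part.

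For the inner split, note $\Inn(W_\Gamma)\le\Aut^0(W_\Gamma)$ since conjugation sends each $a_i$ to a conjugate of itself, so $\Out^0(W_\Gamma) = \Aut^0(W_\Gamma)/\Inn(W_\Gamma)$ and the job is to split the quotient map. As recalled above, $\Aut^0(W_\Gamma)$ is generated by the elementary partial conjugations $x_{i,C}$ with $C$ a connected component of $\Gamma\setminus\St(a_i)$; moreover $\mathrm{conj}_{a_i} = \prod_C x_{i,C}$, the product over all components of $\Gamma\setminus\St(a_i)$, so $\Inn(W_\Gamma)$ is generated by these ``full'' partial conjugations. I would fix, for each vertex $a_i$ with more than one component in $\Gamma\setminus\St(a_i)$, a distinguished such component $C_0(a_i)$, and set $S := \langle\, x_{i,C} : C\neq C_0(a_i)\,\rangle$. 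Using $x_{i,C}^2 = \mathrm{id}$, $x_{i,C}x_{i,C'} = x_{i,\,C\cup C'}$ for distinct components, and $\mathrm{conj}_{a_i} = \prod_C x_{i,C}$, every $x_{i,D}$ is seen to lie in $\Inn(W_\Gamma)\cdot S$, so by normality of $\Inn(W_\Gamma)$ we get $\Inn(W_\Gamma)\cdot S = \Aut^0(W_\Gamma)$. The substantive point is then $S\cap\Inn(W_\Gamma) = 1$: I would record each $\phi\in\Aut^0(W_\Gamma)$ by its tuple of conjugators $(g_1,\dots,g_n)$ with $\phi(a_i) = g_ia_ig_i^{-1}$ (each $g_i$ taken in a fixed transversal of the centralizer of $a_i$) and argue by a normal-form/support analysis that no nonidentity element of $S$ has all $g_i$ equal up to centralizers. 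In the universal case one may instead invoke the Fouxe-Rabinovitch/Gilbert/McCullough--Miller theory of automorphisms of free products, for which this splitting is part of the package, the role of the general-graph case being to extend it to arbitrary defining graphs.

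The main obstacle is exactly this last splitting of $\Aut^0(W_\Gamma)\twoheadrightarrow\Out^0(W_\Gamma)$. The outer split and the ``$\Inn(W_\Gamma)\cdot S = \Aut^0(W_\Gamma)$'' half are bookkeeping with the known generators and the $\mathcal{C}$-action; but showing that the combinatorial complement $S$ meets $\Inn(W_\Gamma)$ trivially needs a careful normal-form argument, and the choice of the distinguished components $C_0(a_i)$ must be made so that the argument is not defeated by indirect conjugation (one partial conjugation applied after another can conjugate the acting letter itself, reintroducing ``forbidden'' letters into the conjugators). Equivalently, one can appeal to the explicit presentation of $\Out^0(W_\Gamma)$ from the McCullough--Miller and Gutierrez--Piggott--Ruane circle of results and verify that the chosen lifts of its generators satisfy every lifted relation. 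A minor additional case is when $\Gamma$ has a cone vertex, so that $Z(W_\Gamma)\neq 1$ and $\Inn(W_\Gamma)$ is a proper quotient of $W_\Gamma$; this does not affect the structure of the argument.
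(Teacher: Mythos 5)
The paper does not prove this theorem; it quotes it as a known result of Gutierrez--Piggott--Ruane \cite{GPR}, so there is no internal proof to measure your attempt against. What one can assess is whether your outline matches the published strategy and whether it actually closes. Your plan does track the standard decomposition: split off $\Aut^1$ as a complement to the normal subgroup $\Aut^0$, then split $\Inn$ out of $\Aut^0$ by choosing, for each acting vertex $a_i$, a distinguished component $C_0(a_i)$ of $\Gamma\setminus\St(a_i)$ and taking $S$ to be generated by the remaining elementary partial conjugations. That second construction is exactly the complement GPR use (and it is the one the paper itself records when it says $\mathcal{P}^0$ consists of the $x_{i,D}$ with $D\subseteq[n]\setminus\{i,j\}$, $j$ minimal), so the route is the right one.

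The issue is that the two load-bearing claims are not actually established. For the outer split, $\Aut^0\cap\Aut^1=1$ is reduced to ``track the induced permutation on $\mathcal{C}$ and check effects cannot cancel,'' but cancellation in a word mixing graph symmetries and transvections is precisely the nontrivial point; the cleaner route (and what is really needed) is to observe that $\Aut^0$ is the kernel of the map $\Aut(W_\Gamma)\to\GL(n,2)$ to the abelianization and then show your chosen $\Aut^1$ maps injectively, which is a statement you would still have to prove. For the inner split, $S\cap\Inn(W_\Gamma)=1$ is flagged by you as the main obstacle, and the proposed ``record each $\phi$ by a tuple of conjugators and do a support analysis'' is a gesture at an argument rather than one; the alternative you mention (lift Mühlherr's relations for $\Out^0$ and check they hold in $S$) is viable but equally left undone. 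So what you have is a correct plan with the right complement, but with both triviality-of-intersection steps still open; as written it is a strategy, not a proof.
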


Now $\Inn(W_\Gamma) \cong W_\Gamma / Z(W_\Gamma)$, and the center of a RACG is the subgroup generated by the vertices of $\Gamma$ connected to all other vertices \cite{GPR}. $W_\Gamma$ then splits as $W_{\Gamma'} \times Z(W_\Gamma)$, where $\Gamma'$ is  the induced graph in $\Gamma$ of the non-central vertices. Thus, $\Inn(W_\Gamma) \cong W_{\Gamma'}$ is a RACG itself.

Additionally, for a RACG $W_\Gamma$, $\Aut^1(W_\Gamma)$ is a subgroup of $\GL(n,2)$, and so is a finite group \cite{GPR}. So, both $\Aut^1(W_\Gamma)$ and $\Inn(W_\Gamma)$ have well-understood large scale geometry. Therefore, studying the geometry of $\Aut^0(W_\Gamma)$, or even $\Aut(W_\Gamma)$, relies on understanding the geometry of $\Out^0(W_\Gamma)$. 

Since $\Aut^0(W_\Gamma)$ and $\Out^0(W_\Gamma)$ are generated by involutions (the partial conjugations), it is a natural question to ask:

\begin{question*}
For a given RACG $W_{\Gamma}$, are $\Aut^0(W_\Gamma)$ or $\Out^0(W_\Gamma)$ themselves RACGs or even just $\cat(0)$ groups?
\end{question*}

To answer this, we need not just a generating set but a full finite presentation for $\Aut^0(W_\Gamma)$ and $\Out^0(W_\Gamma)$ and preferably a geometric model for each to act upon. A full presentation for $\Aut^0(W_\Gamma)$ is given in both \cite{Lauthesis, Muhlherr}, and McCullough-Miller space will give one such potential geometric model for the simpler case of $\Out(W_n)$ \cite{Piggott}. 

For $W_n$, there are no transvections and $\Aut^1(W_n)$ consists of only the graph symmetries and so is isomorphic to $\Sigma_n$, the symmetric group on $n$ letters. Since $W_n$ has trivial center, $\Inn(W_n) \cong W_n$. Thus in the case of $W_n$, we have the decomposition:

\begin{cor*}
\begin{align*}
\Aut(W_n) &= \underbrace{\left( W_n \rtimes \Out^0 \left(W_n \right) \right)}_{\Aut^0 \left(W_n \right)} \rtimes \Sigma_n \\
\Out (W_n) &= \Out^0 \left(W_n \right) \rtimes \Sigma_n 
\end{align*}
\end{cor*}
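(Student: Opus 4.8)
The statement is essentially a matter of feeding the special features of $W_n$ into the Gutierrez--Piggott--Ruane decomposition
\[
\Aut(W_\Gamma) = \bigl(\Inn(W_\Gamma) \rtimes \Out^0(W_\Gamma)\bigr) \rtimes \Aut^1(W_\Gamma),
\]
so the plan is first to identify the outer two factors when $\Gamma$ is the empty graph on $n$ vertices, obtaining the line for $\Aut(W_n)$, and then to deduce the formula for $\Out(W_n)$ by quotienting out $\Inn(W_n)$.

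For the first line I would use the facts already recorded above. Since no vertex of the empty graph is joined to every other vertex (for $n \geq 2$), $W_n$ has trivial center, and hence $\Inn(W_n) \cong W_n/Z(W_n) \cong W_n$. Since $W_n$ admits no transvections — the image $a_i a_j$ of a transvection would again have to be an involution, which forces $a_i a_j = (a_i a_j)^{-1} = a_j a_i$, and distinct generators of $W_n$ never commute — the group $\Aut^1(W_n)$ is carried entirely by the graph symmetries, and every one of the $n!$ vertex permutations is an automorphism of the edgeless graph, so $\Aut^1(W_n) \cong \Sigma_n$. Substituting these two identifications into the displayed decomposition yields the first line, the bracketed subgroup being $\Aut^0(W_n)$ by definition.

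For the second line I would pass to the quotient by $\Inn(W_n)$. Because $\Inn(W_n)$ is characteristic in $\Aut(W_n)$, it is normal in the iterated semidirect product; it lies inside the normal subgroup $\Aut^0(W_n) = \Inn(W_n) \rtimes \Out^0(W_n)$, whose quotient by it is $\Out^0(W_n)$ by definition; and the complement $\Aut^1(W_n) \cong \Sigma_n$ meets $\Aut^0(W_n)$, hence $\Inn(W_n)$, trivially. Therefore $\Out(W_n) = \Aut(W_n)/\Inn(W_n) \cong \Out^0(W_n) \rtimes \Sigma_n$, with $\Sigma_n$ acting through the action it induces on $\Aut^0(W_n)/\Inn(W_n)$. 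There is no genuine obstacle in this argument; the only step deserving care is the last one — checking that quotienting the iterated semidirect product by the inner factor leaves an honest semidirect product — and this is precisely where one uses that $\Aut^1(W_n)$ is a complement to $\Aut^0(W_n)$ consisting of no inner automorphisms.
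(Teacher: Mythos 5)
Your proposal is correct and follows essentially the same route the paper takes: specialize the Gutierrez--Piggott--Ruane decomposition by noting $W_n$ has trivial center (so $\Inn(W_n) \cong W_n$) and admits no transvections (so $\Aut^1(W_n)$ reduces to the graph symmetries, giving $\Sigma_n$), then quotient by $\Inn(W_n)$ for the second line. You simply spell out the justification for the absence of transvections and the safety of the quotient step, which the paper leaves implicit.
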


\begin{remark*}
When we write $x_{i,D} \in \Out^0(W_n)$, we can think of $\Out^0(W_n)$ as either a subgroup of $\Aut(W_n)$, in which case $x_{i,D}$ is a single automorphism, or else as a subgroup of $\Out(W_n)$, in which case $x_{i,D}$ is an equivalence class of automorphisms that differ by inner automorphisms. In the former case, both the acting letter $i$ and the domain $D$ are uniquely determined by the group element $x_{i,D}$. In the latter case, this is almost true. The acting letter $i$ is determined, but there are exactly two domains that result in the same outer automorphism class, namely $x_{i,D} = x_{i,D^{c}}$, where $D^{c} = [n] \setminus \{D \cup \{i\}\}$. If we need to pick a unique representative for $x_{i,D}$, we follow \cite{Piggott} and choose the $D$ that does \emph{not} contain the smallest possible index (which is usually 1, unless 1 is the acting letter, in which case it is 2). 
\end{remark*}

What about the geometry of $\Out^0(W_n)$? While $\Aut(W_3)$ is known to be $\cat(0)$ \cite{PRW} and $\Out^0(W_3) \cong W_3$, for $n \geq 4$, it was open as to whether or not $\Aut^0(W_n)$ or $\Out^0(W_n)$ is a right-angled Coxeter group or even a $\cat(0)$ group. It is now known that $\Out^0(W_n)$ is not a right-angled Coxeter group \cite{mythesis}.

For each of the groups $G = \Out^0(W_n)$ or $\Out(W_n)$, we might ask the following questions:
\begin{enumerate}
\item Is $G$ a $\cat(0)$ group?
\item Is there an accurate geometric model for $G$, i.e., a geodesic metric space $X$ such that $\Isom(X) \cong G$? 

\end{enumerate}

Piggott \cite{Piggott} proved that McCullough-Miller space is an accurate combinatorial and topological model for $\Out(W_n)$, although we show in Section  \ref{sec:metmm4} that it cannot be promoted to a true geometric model for either $\Out(W_n)$ or $\Out^0(W_n)$.

In particular, we prove the following main theorem:

\begin{reptheorem}{thm:noout0n}
There does not exist an $\Out^0(W_n)$-equivariant (or $\Out(W_n)$-equivariant) piecewise Euclidean (or piecewise hyperbolic) $\cat(0)$ $\LP\cat(-1)\RP$ metric on $\K_n$ for $n \geq 4$. 
\end{reptheorem}

\section{Hypertrees}\label{sec:hypertrees}

The following section is inspired by the exposition in ~\cite{Piggott}.

An accurate geometric model for $\Out^0(W_n)$ is given by McCullough-Miller space, which was originally defined using a simplicial complex associated to labeled bipartite trees \cite{MM}. However, an equivalent definition of the space is derived through a complex of labeled hypertrees \cite{McCammond}.

The connection between hypertrees and $\Out^0(W_n)$ is encapsulated in the following main theorem of this section.

\begin{reptheorem}{thm:comhyp}
Let $x_{i_1,D_1}, x_{i_2,D_2}, \dotsc, x_{i_p,D_p}$ be partial conjugations in $\Out^0(W_n) \leq \Aut^0(W_n)$. Then there exists a hypertree $\Theta \in \HTc_n$ that carries all of the \[x_{i_1,D_1}, x_{i_2,D_2}, \dotsc, x_{i_p,D_p}\] if and only if  they pairwise commute. 
\end{reptheorem}

First, we must define the relevant concepts. 

\begin{definition}
A \emph{hypergraph} $\Gamma$ is an ordered pair $(V_\Gamma , E_\Gamma )$ consisting of a set of \emph{vertices} $V_\Gamma$ and a set of \emph{hyperedges} $E_\Gamma$, where for each $e \in E_\Gamma$, $e \subseteq V_\Gamma$ and $|e| \geq 2$. Often we will label the vertices which leads to a \emph{labeled hypergraph}, and we say that $\Gamma$ is a \emph{(labeled) hypergraph on $V_\Gamma$}. A hypergraph in which every edge contains exactly two vertices is a \emph{(simple) graph}. 

We consider two equivalences on the class of hypergraphs. First, two hypergraphs $\Gamma$ and $\Gamma'$ are \emph{isomorphic as unlabeled hypergraphs} if there exists a bijection $f: V_\Gamma \to V_{\Gamma'}$ such that for each subset $S \subseteq V_\Gamma$, $f(S) \in E_{\Gamma'}$ if and only if $S \in E_{\Gamma}$. $f$ is then called a \emph{hypergraph isomorphism}. Second, two hypergraphs $\Gamma$ and $\Gamma'$ are \emph{isomorphic as labeled hypergraphs} if $V_{\Gamma} = V_{\Gamma'}$ and the identity map $V_{\Gamma} \to V_{\Gamma}$ is a hypergraph isomorphism. Unless stated otherwise, labeled hypergraphs will be considered up to labeled hypergraph isomorphism. 

A \emph{simple walk} from $v$ to $v'$ in $\Gamma$ is a sequence of alternating hypervertices and hyperedges $v=v_0 \overset{e_1}{\rightarrow} v_1 \overset{e_2}{\rightarrow}  \dotsb \overset{e_p}{\rightarrow} v_p = v'$ where $\{v_i , v_{i+1}\} \subseteq e_{i+1}$ for all $0 \leq i \leq p-1$, $v_{i} \neq v_{j}$ for all $0 \leq i\neq j \leq p$, and $e_{i} \neq e_{j}$ for all $1 \leq i\neq j \leq p$.

A \emph{hypertree} is a hypergraph $\Gamma$ where for all $v,w \in V_\Gamma$, there exists a unique simple walk from $v$ to $w$ in $\Gamma$. A hypertree which is also a graph is a \emph{tree}. 
\end{definition}

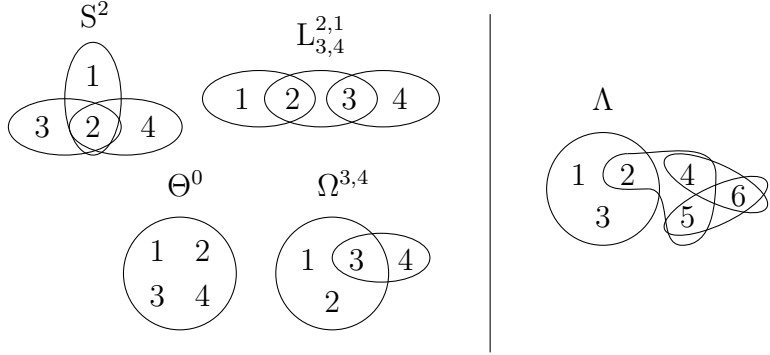
\begin{figure}
\begin{center}
\begin{tikzpicture}[scale=1.5]

\node (T) at (1.8, -0.5) {$\Theta^0$};

\draw ($(T)+(-0.05,-0.8)$) circle [radius=0.5];
\node at ($(T)+(-0.25,-0.6)$) {$1$};
\node [color=black] at ($(T)+(0.15,-0.6)$) {$2$};
\node [color=black] at ($(T)+(-0.25,-1.0)$) {$3$};
\node [color=black] at ($(T)+(0.15,-1.0)$) {$4$};

\node (L) at (3, 0.8) {$\mathrm{L}^{2,1}_{3,4}$};

\draw ($(L)+(-0.55,-0.55)$) circle[color=black, x radius=0.5, y radius=0.25, rotate = 0];
\draw ($(L)+(0,-0.55)$) circle[color=black, x radius=0.5, y radius=0.25, rotate = 0];
\draw ($(L)+(0.55,-0.55)$) circle[color=black, x radius=0.5, y radius=0.25, rotate = 0];
\node [color=black] at ($(L)+(-0.7,-0.55)$) {$1$};
\node [color=black] at ($(L)+(-0.25,-0.55)$) {$2$};
\node [color=black] at ($(L)+(0.25,-0.55)$) {$3$};
\node [color=black] at ($(L)+(0.7,-0.55)$) {$4$};

\node (S) at (1, 1) {$\mathrm{S}^{2}$};

\draw ($(S)+(-0.27,-1)$) circle[color=black, x radius=0.5, y radius=0.25, rotate = 0];
\draw ($(S)+(0.27,-1)$) circle[color=black, x radius=0.5, y radius=0.25, rotate = 0];
\draw ($(S)+(-0.02,-0.75)$) circle[color=black, x radius=0.5, y radius=0.25, rotate = 90];
\node [color=black] at ($(S)+(-0.02,-1)$) {$2$};
\node [color=black] at ($(S)+(-0.47,-1)$) {$3$};
\node [color=black] at ($(S)+(0.47,-1)$) {$4$};
\node [color=black] at ($(S)+(-0.02,-0.55)$) {$1$};

\node (O) at (3.2, -0.5) {$\Omega^{3,4}$};

\draw ($(O)+(-0.1,-0.8)$) circle [radius=0.5];
\draw ($(O)+(0.333,-0.66)$) circle[color=black, x radius=0.433, y radius=0.2165, rotate = 0];
\node [color=black] at ($(O)+(0.1165,-0.675)$) {$3$};
\node [color=black] at ($(O)+(-0.3165,-0.675)$) {$1$};
\node [color=black] at ($(O)+(-0.1,-1.05)$) {$2$};
\node [color=black] at ($(O)+(0.5495,-0.675)$) {$4$};

\draw (4.5,1) -- (4.5,-2);

\node (N) at (5.5, 0.25) {$\Lambda$};

\draw ($(N)+(0,-0.8)$) circle [radius=0.5];
\draw ($(N)+(1,-0.75)$) circle[color=black, x radius=0.5, y radius=0.17, rotate = -25];
\draw ($(N)+(1,-0.95)$) circle[color=black, x radius=0.5, y radius=0.17, rotate = 25];

\draw ($(N)+(0,-0.675)$) to[out=-90,in=180] ($(N)+(0.4,-0.8)$);
\draw ($(N)+(0.4,-0.8)$) to[out=0,in=180] ($(N)+(0.7,-1.3)$);
\draw ($(N)+(0.7,-1.3)$) to[out=0,in=-90] ($(N)+(1,-0.675)$);
\draw ($(N)+(1,-0.675)$) to[out=90,in=0] ($(N)+(0.2165,-0.4875)$);
\draw ($(N)+(0.2165,-0.4875)$) to[out=180,in=90] ($(N)+(0,-0.675)$);

\node [color=black] at ($(N)+(0.2165,-0.675)$) {$2$};
\node [color=black] at ($(N)+(-0.2165,-0.675)$) {$1$};
\node [color=black] at ($(N)+(0,-1.05)$) {$3$};
\node [color=black] at ($(N)+(0.75,-0.675)$) {$4$};
\node [color=black] at ($(N)+(0.75,-1.05)$) {$5$};
\node [color=black] at ($(N)+(1.2,-0.8625)$) {$6$};

\end{tikzpicture}
\end{center}
\caption[Examples of Hypergraphs]{Examples of hypergraphs: $\Theta^0$, $\mathrm{S}^2$, $\mathrm{L}^{2,1}_{3,4}$, and $\Omega^{3,4}$ are hypertrees. $\mathrm{S}^2$ and $\mathrm{L}^{2,1}_{3,4}$ are trees. $\Lambda$ is a hypergraph but not a hypertree, since both $4 \rightarrow 6 \rightarrow 5$ and $4 \rightarrow 5$ are simple walks in $\Lambda$.}
\end{figure}

\begin{remark*}[\cite{Piggott}]
The set of hypertrees on a set $S$ is in one-to-one correspondence with the set of bipartite labeled trees whose labeled vertices are in bijection with $S$.  
\end{remark*}

\begin{definition}
For each positive integer $n$, let $[n] := \{1,2,\dotsc,n\}$. Consider $\mathcal{HT}_{n}$, defined to be the set of hypertrees on $[n]$ up to labeled hypergraph isomorphism.

Given hypertrees $\Theta, \Theta' \in \HTc_n$, we say that $\Theta'$ is obtained from $\Theta$ by \emph{a single fold} if there exists distinct hyperedges $e,e' \in E_{\Theta}$ such that $e \cap e' \neq \varnothing$ and 
\[
E_{\Theta'} = \left( E_{\Theta} \setminus \{e,e'\} \right) \cup \{e \cup e'\},
\]
i.e., $E_{\Theta'}$ is the result of replacing $e$ and $e'$ in $E_{\Theta}$ by their union (which is still a hyperedge). Since $e$ and $e'$ are required to intersect, folding a hypertree results in a hypertree.  For each pair $\Theta, \Lambda \in \HTc_n$, we write $\Theta \leq \Lambda$ and say that $\Theta$ is a result of \emph{folding} $\Lambda$ if $\Theta$ may be obtained from $\Lambda$ by a (possibly empty) sequence of folds. Then $\left(\HTc_{n},\leq\right)$ is a partially ordered set called the \emph{hypertree poset of rank $n$}. We will often abuse notation and refer to this partially ordered set by $\HTc_n$. 
\end{definition}

\begin{figure}
\begin{center}
\begin{tikzpicture}[scale=1.5]

\node (T) at (0, 0) {$\Theta^0$};

\draw ($(T)+(-0.05,-0.8)$) circle [radius=0.5];
\node at ($(T)+(-0.25,-0.6)$) {$1$};
\node [color=black] at ($(T)+(0.15,-0.6)$) {$2$};
\node [color=black] at ($(T)+(-0.25,-1.0)$) {$3$};
\node [color=black] at ($(T)+(0.15,-1.0)$) {$4$};

\node (O) at (-1, 1.5) {$\Omega^{3,4}$};

\draw ($(O)+(-0.1,-0.8)$) circle [radius=0.5];
\draw ($(O)+(0.333,-0.66)$) circle[color=black, x radius=0.433, y radius=0.2165, rotate = 0];
\node [color=black] at ($(O)+(0.1165,-0.675)$) {$3$};
\node [color=black] at ($(O)+(-0.3165,-0.675)$) {$1$};
\node [color=black] at ($(O)+(-0.1,-1.05)$) {$2$};
\node [color=black] at ($(O)+(0.5495,-0.675)$) {$4$};

\node (O2) at (1, 1.5) {$\Omega^{1,2}$};

\draw ($(O2)+(-0.1,-0.8)$) circle [radius=0.5];
\draw ($(O2)+(0.333,-0.66)$) circle[color=black, x radius=0.433, y radius=0.2165, rotate = 0];
\node [color=black] at ($(O2)+(0.1165,-0.675)$) {$1$};
\node [color=black] at ($(O2)+(-0.3165,-0.675)$) {$3$};
\node [color=black] at ($(O2)+(-0.1,-1.05)$) {$4$};
\node [color=black] at ($(O2)+(0.5495,-0.675)$) {$2$};

\node (L) at (-2.3, 2.7) {$\mathrm{L}^{2,1}_{3,4}$};

\draw ($(L)+(-0.55,-0.55)$) circle[color=black, x radius=0.5, y radius=0.25, rotate = 0];
\draw ($(L)+(0,-0.55)$) circle[color=black, x radius=0.5, y radius=0.25, rotate = 0];
\draw ($(L)+(0.55,-0.55)$) circle[color=black, x radius=0.5, y radius=0.25, rotate = 0];
\node [color=black] at ($(L)+(-0.7,-0.55)$) {$1$};
\node [color=black] at ($(L)+(-0.25,-0.55)$) {$2$};
\node [color=black] at ($(L)+(0.25,-0.55)$) {$3$};
\node [color=black] at ($(L)+(0.7,-0.55)$) {$4$};

\node (L2) at (2.3, 2.7) {$\mathrm{L}^{1,2}_{3,4}$};

\draw ($(L2)+(-0.55,-0.55)$) circle[color=black, x radius=0.5, y radius=0.25, rotate = 0];
\draw ($(L2)+(0,-0.55)$) circle[color=black, x radius=0.5, y radius=0.25, rotate = 0];
\draw ($(L2)+(0.55,-0.55)$) circle[color=black, x radius=0.5, y radius=0.25, rotate = 0];
\node [color=black] at ($(L2)+(-0.7,-0.55)$) {$2$};
\node [color=black] at ($(L2)+(-0.25,-0.55)$) {$1$};
\node [color=black] at ($(L2)+(0.25,-0.55)$) {$3$};
\node [color=black] at ($(L2)+(0.7,-0.55)$) {$4$};

\node (S) at (0, 3.2) {$\mathrm{S}^{3}$};

\draw ($(S)+(-0.27,-1)$) circle[color=black, x radius=0.5, y radius=0.25, rotate = 0];
\draw ($(S)+(0.27,-1)$) circle[color=black, x radius=0.5, y radius=0.25, rotate = 0];
\draw ($(S)+(-0.02,-0.75)$) circle[color=black, x radius=0.5, y radius=0.25, rotate = 90];
\node [color=black] at ($(S)+(-0.02,-1)$) {$3$};
\node [color=black] at ($(S)+(-0.47,-1)$) {$2$};
\node [color=black] at ($(S)+(0.47,-1)$) {$4$};
\node [color=black] at ($(S)+(-0.02,-0.55)$) {$1$};

\draw ($(L)+(0,-0.85)$) -- ($(O)+(-0.4,-0.35)$);
\draw ($(L2)+(-1.1,-0.55)$) -- ($(O)+(0.4,-0.35)$);
\draw ($(S)+(0,-1.28)$) -- ($(O)+(0.3,-0.3)$);
\draw ($(L2)+(0,-0.85)$) -- ($(O2)+(0.3,-0.3)$);
\draw ($(O)+(0.2,-1.3)$) -- ($(T)+(-0.3,-0.3)$);
\draw ($(O2)+(-0.3,-1.35)$) -- ($(T)+(0.3,-0.35)$);

\end{tikzpicture}
\end{center}
\caption[The Partial Order on Hypertrees]{The lines represent folding relations on hypertrees in $\HTc_4$. So $\Theta^0 \leq \Omega^{3,4} \leq \mathrm{L}^{2,1}_{3,4}$, $\mathrm{ S}^3$, $\mathrm{L}^{1,2}_{3,4}$, while $\Theta^0 \leq \Omega^{1,2} \leq \mathrm{L}^{1,2}_{3,4}$.}
\label{fig:htc4}
\end{figure}
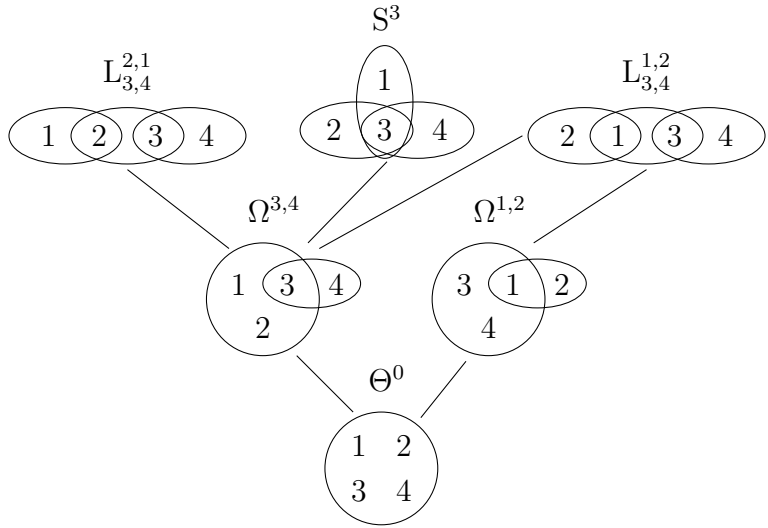

\begin{definition}
The simplicial realization of $\left(\HTc_n, \leq \right)$ is the \emph{hypertree complex of rank $n$}, $\HT_n$. This means that $\HT_n$ is a simplicial complex whose vertices are in bijective correspondence with the set of hypertrees in $\HTc_n$ and where $\Theta_1, \Theta_2, \dotsc , \Theta_k$ span a $k$-simplex in $\HT_n$ if and only if (up to reordering) $\Theta_1 \leq \Theta_2 \leq \dotsb \leq \Theta_k$ in $\HTc_n$. Since maximal chains in $\HTc_n$ involve folding trees a single fold at a time, the dimension of $\HT_n$ is $n-2$. 
\end{definition}

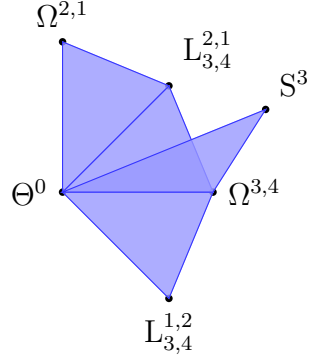
\begin{figure}
\begin{center}
\begin{tikzpicture}[scale=2]

\node (T) [circle,fill,inner sep=1pt,label=180:$\Theta^0$] at (0, 0) {};
\node (L) [circle,fill,inner sep=1pt,label=45:$\mathrm{L}^{2,1}_{3,4}$] at (0.707, 0.707) {};
\node (O) [circle,fill,inner sep=1pt,label=0:$\Omega^{3,4}$] at (1, 0) {};
\node (O2) [circle,fill,inner sep=1pt,label=90:$\Omega^{2,1}$] at (0, 1) {};
\node (L2) [circle,fill,inner sep=1pt,label=270:$\mathrm{L}^{1,2}_{3,4}$] at (0.707, -0.707) {};

\node (S) [circle,fill,inner sep=1pt,label=45:$\mathrm{S}^{3}$] at (1.35, 0.55) {};

\draw [fill,opacity = 0.8, color=blue!40!white] ($(T)$) -- ($(O)$) -- ($(L)$) -- ($(T)$) -- cycle;
\draw [color=blue!80!white] ($(T)$) -- ($(O)$) -- ($(L)$) -- ($(T)$) -- cycle;

\draw [fill,opacity = 0.8, color=blue!40!white] ($(T)$) -- ($(O)$) -- ($(L2)$) -- ($(T)$) -- cycle;
\draw [color=blue!80!white] ($(T)$) -- ($(O)$) -- ($(L2)$) -- ($(T)$) -- cycle;

\draw [fill,opacity = 0.8, color=blue!40!white] ($(T)$) -- ($(L)$) -- ($(O2)$) -- ($(T)$) -- cycle;
\draw [color=blue!80!white] ($(T)$) -- ($(L)$) -- ($(O2)$) -- ($(T)$) -- cycle;

\draw [fill,opacity = 0.8, color=blue!40!white] ($(T)$) -- ($(O)$) -- ($(S)$) -- ($(T)$) -- cycle;
\draw [color=blue!80!white] ($(T)$) -- ($(O)$) -- ($(S)$) -- ($(T)$) -- cycle;

\end{tikzpicture}
\end{center}
\caption[The Hypertree Complex, $\HT_4$]{A portion of the hypertree complex, $\HT_4$.}
\label{fig:ht4}
\end{figure}

\begin{remark*}
For $n=4$, $|\HTc_4| = 29$ and the height of $\HTc_4$ is 3. Thus, $\HT_4$ is a simplicial 2-complex. 
\end{remark*}

Now $\Sigma_n$ acts on $\HTc_n$ in an obvious way: Each permutation of $[n]$ just permutes the labels of the hypertrees, which preserves the partial order, and so is an order automorphism of $\HTc_n$. This action by order automorphisms of $\left(\HTc_n, \leq \right)$ naturally extends to an action by simplicial automorphisms on $\HT_n$ \cite{Piggott}. One might wonder: Are there any other hidden automorphisms of either $\HTc_n$ or $\HT_n$? It turns out the answer is ``no''. 

\begin{theorem}[Piggott \cite{Piggott}]
For all integers $n \geq 3$, 
\[
\Aut(\HT_n) \cong \Aut(\HTc_n) \cong \Sigma_n,
\]
where $\Aut(\HT_n)$ is the set of simplicial automorphisms of $\HT_n$, $\Aut(\HTc_n)$ is the set of order isomorphisms of $\HTc_n$, and  $\Sigma_n$ is the symmetric group on $n$ letters. 
\end{theorem}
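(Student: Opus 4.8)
The plan is built around the chain of injections $\Sigma_n\hookrightarrow\Aut(\HTc_n)\hookrightarrow\Aut(\HT_n)$ — the first because relabelling preserves folding (as recalled above) and is already detected by the induced action on the star hypertrees $\mathrm S^1,\dots,\mathrm S^n$, the second because an order isomorphism of $\HTc_n$ induces a simplicial automorphism of its order complex $\HT_n$, injectively — so everything comes down to showing that an arbitrary $\psi\in\Aut(\HT_n)$ is induced by a permutation. I would first pass from $\HT_n$ back to $\HTc_n$. The poset $\HTc_n$ has a unique minimal element $\Theta^0$ (the only hypertree with a single hyperedge), and it is graded, the rank of a hypertree being its number of hyperedges minus one, with top rank $n-2$ (so $\HT_n$ is pure of that dimension). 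Since the facets of $\HT_n$ are exactly the maximal chains of $\HTc_n$, all of which begin at $\Theta^0$, and since $\downarrow\mathrm S^i\cap\downarrow\mathrm S^j=\{\Theta^0\}$ for $i\ne j$ (a hypertree all of whose hyperedges contain two fixed letters has only one hyperedge), $\Theta^0$ is the unique vertex of $\HT_n$ contained in every facet; hence $\psi(\Theta^0)=\Theta^0$. One then checks that, with $\Theta^0$ marked, the rank function is recoverable simplicially: a vertex has rank $\ge r$ exactly when it lies in an $r$-simplex together with $\Theta^0$ and with a vertex of each rank $1,\dots,r-1$, the point being that a chain of a graded poset meets each rank at most once, so such a simplex must list $\Theta^0$, then ranks $1,\dots,r-1$, then the vertex on top. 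An induction on $r$ then shows $\psi$ preserves rank, whence (it fixes $\Theta^0$ and respects rank) $\psi$ carries comparable pairs to comparable pairs in rank order, i.e.\ it is an order automorphism. So it suffices to prove $\Aut(\HTc_n)\cong\Sigma_n$; for $n=3$ this is immediate, since $\HTc_3$ consists of $\Theta^0$ and the three stars only.

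For $n\ge4$, the star hypertrees are canonical in $\HTc_n$: $\mathrm S^i$ is a maximal element, and because every subset of the edge set of the graph $\mathrm S^i$ spans a subtree, every partition of that edge set is the pattern of a fold; hence $\downarrow\mathrm S^i$ is isomorphic to the partition lattice on $[n]\setminus\{i\}$ (folding corresponding to coarsening), and $|\downarrow\mathrm S^i|$ equals the Bell number $B_{n-1}$. For any non-star tree this principal ideal is strictly smaller (some two-element set of edges is disconnected), so $\{\mathrm S^1,\dots,\mathrm S^n\}$ is precisely the set of maximal elements with largest principal ideal, and is therefore invariant under any $\phi\in\Aut(\HTc_n)$. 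Composing $\phi$ with the permutation it induces on the stars, I may assume $\phi$ fixes every $\mathrm S^i$, and I must show $\phi=\mathrm{id}$. Restricting $\phi$ to $\downarrow\mathrm S^i\cong\Pi_{n-1}$ gives an automorphism of the partition lattice on $n-1\ge3$ letters, which by the classical computation $\Aut(\Pi_m)\cong\Sigma_m$ ($m\ge3$) is a permutation $\tau_i$ of $[n]\setminus\{i\}$. I would also record that, the stars being fixed, for every tree $T$ and letter $k$ the intersection $\downarrow T\cap\downarrow\mathrm S^k$ is the principal ideal of the hypertree obtained from $T$ by collapsing each branch at $k$ to a single hyperedge; its size is therefore $B_{\deg_T(k)}$, so (Bell numbers being strictly increasing) $\phi$ preserves the degree of every letter in every tree, and $\tau_k$ carries the partition of $[n]\setminus\{k\}$ recorded by the branches of $T$ at $k$ to the one recorded by the branches of $\phi(T)$ at $k$.

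The heart of the argument is to glue the local permutations $\tau_k$ into one global permutation. Because distinct star-ideals meet only in $\Theta^0$, no two $\tau_k$ can be compared directly, and the relations have to be squeezed out of higher-rank trees that interface with several stars at once. For distinct letters $c,q$ and $p\in[n]\setminus\{c,q\}$, I would use the \emph{broom} $B(c;q,p)$: $c$ adjacent to every letter except $p$, and $p$ a pendant on $q$. For $n\ge5$ its degree sequence ($n-2$ at $c$, $2$ at $q$, $1$ elsewhere) is rigid enough that $\phi\bigl(B(c;q,p)\bigr)=B(c;q,\rho(p))$ for some permutation $\rho$ of $[n]\setminus\{c,q\}$; the branch partition of $B(c;q,p)$ at $q$ has a unique singleton block, namely $\{p\}$, and matching it through $\tau_q$ forces $\tau_q(p)=\rho(p)$ for all $p\in[n]\setminus\{c,q\}$. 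Thus $\tau_q$ preserves the subset $[n]\setminus\{c,q\}$ of its domain $[n]\setminus\{q\}$, so $\tau_q(c)=c$; letting $c$ vary over $[n]\setminus\{q\}$ gives $\tau_q=\mathrm{id}$ for every $q$. Once all $\tau_k$ are trivial, $\phi$ fixes every star-ideal pointwise, so for an arbitrary hypertree $\Theta$ and every $k$ the ideal $\downarrow\Theta\cap\downarrow\mathrm S^k$ is fixed, which means the branches of $\phi(\Theta)$ at $k$ agree with those of $\Theta$ for all $k$; since a hypertree is reconstructed from the collection of its branch partitions (these determine, for any two letters, whether they lie in a common hyperedge), $\phi(\Theta)=\Theta$. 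Hence $\phi=\mathrm{id}$, $\Aut(\HTc_n)=\Sigma_n$, and therefore $\Aut(\HT_n)=\Sigma_n$ as well.

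The main obstacle is exactly this gluing step: because the principal ideals of the stars overlap only in $\Theta^0$, one cannot play two stars off against each other directly and must instead manufacture a family of trees — the brooms — rigid enough that a single one of them, through its branch partition at a degree-$2$ letter, already pins $\tau_q$ down on a codimension-one subset of its domain, which then forces triviality. Secondary points that still need care are: the exact isomorphism types of the ideals $\downarrow\mathrm S^i$ and $\downarrow T$ and the reconstruction of a hypertree from its branch partitions; the dependence of $\Aut(\Pi_m)\cong\Sigma_m$ on $m\ge3$; the reduction from $\HT_n$ to $\HTc_n$ (the inductive recovery of ranks, which must be carried out carefully); and the case $n=4$, where the broom degenerates to a path whose two non-leaf letters have equal degree, so that step must be replaced by a direct finite verification — feasible since $|\HTc_4|=29$.
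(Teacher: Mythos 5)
The paper does not prove this theorem; it is stated with a citation to Piggott \cite{Piggott}, so there is no in‑paper argument to compare against, and your proposal has to be judged on its own merits.

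The core of your argument — that $\Aut(\HTc_n)\cong\Sigma_n$ for $n\geq 5$, via the canonicity of the star ideals $\downarrow\mathrm{S}^i\cong\Pi_{n-1}$, the classical $\Aut(\Pi_m)\cong\Sigma_m$, Bell‑number rigidity of degrees, and the broom trees to glue the local permutations $\tau_k$ — is a nice and, as far as I can see, sound line of reasoning, with the explicitly flagged caveat that $n=4$ requires a separate finite verification that you do not carry out. The reconstruction of a hypertree from its branch partitions also checks out (pairwise‑adjacent triples must lie in a common hyperedge, by uniqueness of simple walks).

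However, there is a genuine gap in the reduction from $\HT_n$ to $\HTc_n$. Your claim is that, once $\Theta^0$ is fixed, the rank function is recoverable simplicially by the characterization ``$v$ has rank $\geq r$ iff $v$ lies in an $r$-simplex with $\Theta^0$ together with a vertex of each rank $1,\dots,r-1$,'' and then ``an induction on $r$ shows $\psi$ preserves rank.'' This induction is circular: to characterize $\{\mathrm{rank}\geq r\}$ you need to already know the sets $\{\mathrm{rank}=s\}$ for $s<r$, but $\{\mathrm{rank}=r-1\}=\{\mathrm{rank}\geq r-1\}\setminus\{\mathrm{rank}\geq r\}$ requires the very set $\{\mathrm{rank}\geq r\}$ you are trying to determine. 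More seriously, the underlying claim is false for graded posets in general, even with the minimum marked. For example, take the graded poset on $\{\hat 0,a,b,x,y\}$ with $\hat 0<a,\ \hat 0<b,\ a<x,\ a<y,\ b<x,\ b<y$; its order complex is a cone with apex $\hat 0$ over the $4$-cycle $a\!-\!x\!-\!b\!-\!y\!-\!a$, whose simplicial automorphism group fixing $\hat 0$ is $D_4$ of order $8$, whereas the order automorphisms fixing $\hat 0$ form $\mathbb Z_2\times\mathbb Z_2$ of order $4$ — the rotation swaps ranks $1$ and $2$ and is simplicial but not order-preserving. So you cannot conclude $\Aut(\HT_n)\hookrightarrow\Aut(\HTc_n)$ from gradedness and the marked minimum alone; you must exploit special combinatorial features of $\HTc_n$. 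Concretely, already in $\HTc_4$ the rank-$1$ vertices $\Omega^{i,j}$ and the rank-$2$ vertices $\mathrm{S}^i$ have identical links in $\HT_4$ (both are stars $K_{1,3}$ with $\Theta^0$ at the center), and the same number of incident facets ($3$ each), so a first-order local invariant does not separate them; one needs, e.g., that the link of $\Omega^{i,j}$ contains vertices (the line trees) with a distinct link type (a path), whereas the link of $\mathrm{S}^i$ does not. Some argument of this flavor, specific to $\HTc_n$, is needed to justify the passage from simplicial to order automorphisms, and as written the proposal does not supply it.
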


Thus, $\HT_n$ provides an accurate (topological) model and $\HTc_n$ provides an accurate (combinatorial) model for $\Sigma_n$. If we endowed $\HT_n$ with any $\Sigma_n$-equivariant metric, for instance a piecewise Euclidean one with equilateral triangles, then $\Sigma_n$ would act by isometries and so $\HT_n$ would be an accurate geometric model for $\Sigma_n$ as well. 

\begin{definition}\label{def:hypclass}
A hypertree $\Theta \in \HTc_n$ has between one and $n-1$ hyperedges, and the \emph{height} of $\Theta$ is defined to be one less than its number of hyperedges. Notice that hypertrees of height $n-2$ are actually trees.

We note a few special classes of hypertrees:
\begin{enumerate}
\item There is a unique hypertree of height zero, denoted $\Theta_n^0$.
\item $\mathcal{S}_n = \{\mathrm{S}_n^j \mid j \in [n]\}$, the set of \emph{star trees}, where $\mathrm{S}_n^j$ is the hypertree of height $n-2$ (tree) whose hyperedges (edges) are exactly $\{i,j\}$ for $i \neq j$. 
\item $\mathcal{L}_n$, the set of \emph{line trees}, which are the trees (hypertrees of height $n-2$) in which exactly two vertices are leaves. 
\item $\mathcal{M}_n^1 = \{\Omega_n^{i,j} \mid i \neq j \in [n]\}$, the set of \emph{omega hypertrees}, are those hypertrees of height 1 that contain the hyperedges $\{i,j\}$ and $[n]\setminus \{j\}$. 
\end{enumerate}
\end{definition}

Two elements in one of these classes are isomorphic as unlabeled hypertrees, and so the action of $\Sigma_n$ on $\HTc_n$ acts transitively on each of these classes. Additionally, in $W_4$, this list actually exhausts all possible hypertrees. 

\begin{question*}
What does $\HT_n$ have to do with $\Out^0(W_n)$?
\end{question*}

It turns out that hypertrees encode commuting relations in $\Out^0(W_n)$.

\begin{definition}
A hypertree $\Theta \in \HTc_n$ \emph{carries} a partial conjugation $x_{i,D}$ if and only if for all $d \in D$, $j \in [n] \setminus D$, the simple walk from $d$ to $j$ visits $i$. 

A general automorphism $\alpha \in \Out^0(W_n)$ is \emph{carried} by $\Theta$ if and only if there exists partial conjugations $x_{i_1,D_1}, x_{i_2,D_2}, \dotsc, x_{i_p,D_p} \in \Out^0(W_n)$ such that $\alpha = x_{i_1,D_1} x_{i_2,D_2} \dotsb x_{i_p,D_p}$ and $x_{i_j,D_j}$ is carried by $\Theta$ for each $1 \leq j \leq p$. 
\end{definition}

For this definition, we may think of $\Out^0(W_n)$ as either a subgroup or a quotient of $\Aut^0(W_n)$.  Inner automorphisms are trivially carried by all hypertrees, since the only element of $[n] \setminus D$ is $i$. Thus, the notion of a hypertree carrying an automorphism is actually well-defined up to outer automorphism class. In particular, we can use this fact to freely switch between representatives  $x_{i,D} = x_{i,D^{c}}$ in $\Out^0(W_n)$, where $D^{c} = [n] \setminus \{D \cup \{i\}\}$. For notation, also let $\widetilde{D} = D \cup \{i\}$ and $\widetilde{D^c} = D^c \cup \{i\}$.

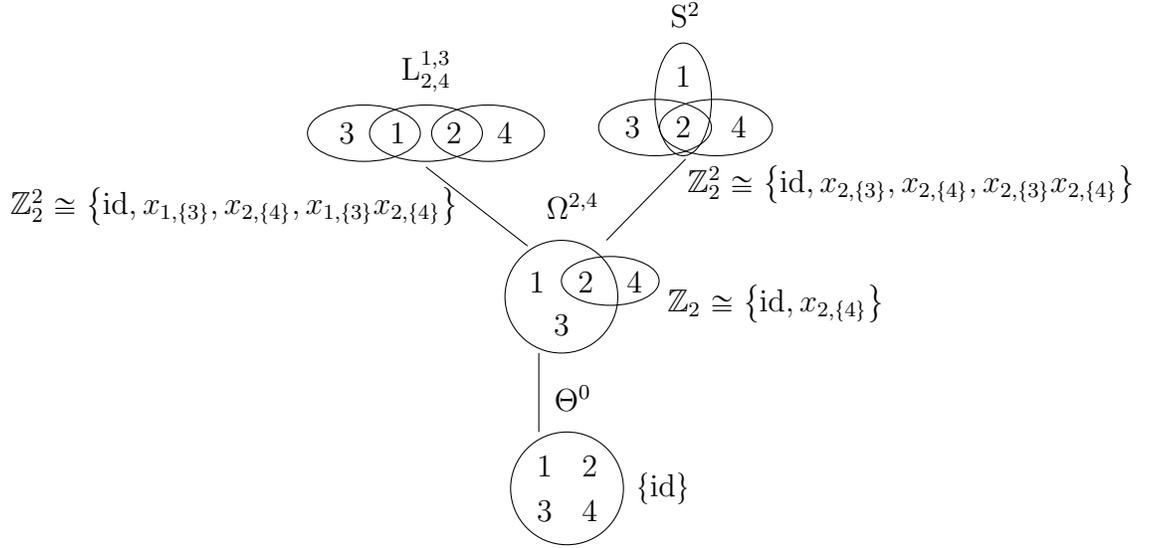
\begin{figure}
\begin{center}
\begin{tikzpicture}[scale=1.5]

\node (T) at (-1, -0.2) {$\Theta^0$};

\draw ($(T)+(-0.05,-0.8)$) circle [radius=0.5];
\node at ($(T)+(-0.25,-0.6)$) {$1$};
\node [color=black] at ($(T)+(0.15,-0.6)$) {$2$};
\node [color=black] at ($(T)+(-0.25,-1.0)$) {$3$};
\node [color=black] at ($(T)+(0.15,-1.0)$) {$4$};
\node [color=black] at ($(T)+(0.8,-0.8)$) {$ \left\{\textrm{id}\right\}$};

\node (O) at (-1, 1.5) {$\Omega^{2,4}$};

\draw ($(O)+(-0.1,-0.8)$) circle [radius=0.5];
\draw ($(O)+(0.333,-0.66)$) circle[color=black, x radius=0.433, y radius=0.2165, rotate = 0];
\node [color=black] at ($(O)+(0.1165,-0.675)$) {$2$};
\node [color=black] at ($(O)+(-0.3165,-0.675)$) {$1$};
\node [color=black] at ($(O)+(-0.1,-1.05)$) {$3$};
\node [color=black] at ($(O)+(0.5495,-0.675)$) {$4$};
\node [color=black] at ($(O)+(1.8,-0.8625)$) {$\mathbb{Z}_2 \cong \left\{\textrm{id},x_{2,\{4\}}\right\}$};

\node (L) at (-2.3, 2.7) {$\mathrm{L}^{1,3}_{2,4}$};

\draw ($(L)+(-0.55,-0.55)$) circle[color=black, x radius=0.5, y radius=0.25, rotate = 0];
\draw ($(L)+(0,-0.55)$) circle[color=black, x radius=0.5, y radius=0.25, rotate = 0];
\draw ($(L)+(0.55,-0.55)$) circle[color=black, x radius=0.5, y radius=0.25, rotate = 0];
\node [color=black] at ($(L)+(-0.7,-0.55)$) {$3$};
\node [color=black] at ($(L)+(-0.25,-0.55)$) {$1$};
\node [color=black] at ($(L)+(0.25,-0.55)$) {$2$};
\node [color=black] at ($(L)+(0.7,-0.55)$) {$4$};
\node [color=black] at ($(L)+(-1.7,-1.2)$) {$\mathbb{Z}_2^2 \cong \left\{\textrm{id},x_{1,\{3\}},x_{2,\{4\}},x_{1,\{3\}} x_{2,\{4\}}\right\}$};

\node (S) at (0, 3.2) {$\mathrm{S}^{2}$};

\draw ($(S)+(-0.27,-1)$) circle[color=black, x radius=0.5, y radius=0.25, rotate = 0];
\draw ($(S)+(0.27,-1)$) circle[color=black, x radius=0.5, y radius=0.25, rotate = 0];
\draw ($(S)+(-0.02,-0.75)$) circle[color=black, x radius=0.5, y radius=0.25, rotate = 90];
\node [color=black] at ($(S)+(-0.02,-1)$) {$2$};
\node [color=black] at ($(S)+(-0.47,-1)$) {$3$};
\node [color=black] at ($(S)+(0.47,-1)$) {$4$};
\node [color=black] at ($(S)+(-0.02,-0.55)$) {$1$};
\node [color=black] at ($(S)+(2,-1.5)$) {$\mathbb{Z}_2^2 \cong \left\{\textrm{id},x_{2,\{3\}},x_{2,\{4\}},x_{2,\{3\}} x_{2,\{4\}}\right\}$};

\draw ($(L)+(0,-0.85)$) -- ($(O)+(-0.4,-0.35)$);
\draw ($(S)+(0,-1.28)$) -- ($(O)+(0.3,-0.3)$);
\draw ($(O)+(-0.3,-1.3)$) -- ($(T)+(-0.3,-0.3)$);

\end{tikzpicture}
\end{center}
\caption[Automorphisms Carried by Hypertrees]{A portion of $\HTc_4$ and the automorphisms in $\Out^0(W_4)$ carried by each hypertree.}
\end{figure}

\begin{remark}
\label{rem:carry}
Hypertrees of height $h$ carry $2^h$ automorphisms in $\Out^0(W_n)$, including the identity automorphism, and if $\Theta \leq \Lambda$, then $\Lambda$ carries all the automorphisms that $\Theta$ does \cite{Piggott}. In fact, the $2^h$ automorphisms carried by $\Theta$ all commute and generate a $\mathbb{Z}_2^h$, which follows from Theorem \ref{thm:comhyp} below.
\end{remark}

\begin{lemma}[Gutierrez-Piggott-Ruane]
\label{lem:compc}
Let $x_{i_1,D_1}$ and $x_{i_2,D_2}$ be partial conjugations in $\Out^0(W_n) \leq \Aut^0(W_n)$. Then they commute if and only if one of the following four cases hold:
\begin{enumerate}
\item $i_1 = i_2$
\item $i_1 \neq i_2$, $i_1 \in D_2$, $i_2 \not\in D_1$, $D_1 \subseteq D_2$, i.e., $i_1 \neq i_2$, $\widetilde{D_1} \cap \widetilde{D_2^c} = \varnothing$.
\item $i_1 \neq i_2$, $i_1 \not\in D_2$, $i_2 \in D_1$, $D_2 \subseteq D_1$, i.e., $i_1 \neq i_2$, $\widetilde{D_1^c} \cap \widetilde{D_2} = \varnothing$.
\item $i_1 \neq i_2$, $i_1 \not\in D_2$, $i_2 \not\in D_1$, $D_1 \cap D_2 = \varnothing$, i.e., $i_1 \neq i_2$, $\widetilde{D_1} \cap \widetilde{D_2} = \varnothing$.
\end{enumerate}

%

\end{lemma}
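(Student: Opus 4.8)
The plan is to compute both composite automorphisms $x_{i_1,D_1}x_{i_2,D_2}$ and $x_{i_2,D_2}x_{i_1,D_1}$ on each generator and decide exactly when they differ by an inner automorphism. It is convenient to record an automorphism $\alpha\in\Aut^0(W_n)$ by its tuple of \emph{multipliers} $(g_1^{\alpha},\dots,g_n^{\alpha})$, where $\alpha(a_k)=g_k^{\alpha}a_k(g_k^{\alpha})^{-1}$ and $g_k^{\alpha}$ is well defined modulo right multiplication by $a_k$, the centralizer of $a_k$ in the free product $W_n$ being $\langle a_k\rangle$. Composition obeys the cocycle rule $g_k^{\alpha\beta}\equiv\alpha(g_k^{\beta})\,g_k^{\alpha}$, and post-composing with an inner automorphism replaces every $g_k^{\alpha}$ by $g\,g_k^{\alpha}$ for a single $g\in W_n$. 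Hence $\alpha$ and $\beta$ commute in $\Out^0(W_n)$ if and only if there is \emph{one} element $g\in W_n$ with $g_k^{\alpha\beta}\equiv g\,g_k^{\beta\alpha}$ modulo $a_k$ for every $k\in[n]$; moreover one may freely replace $D_j$ by $D_j^{c}$ throughout, since $x_{i_j,D_j}=x_{i_j,D_j^{c}}$ in $\Out^0(W_n)$. For a partial conjugation the multiplier is transparent: $g_k=a_i$ if $k\in D$ and $g_k=1$ otherwise.

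Next I would carry out the composition with $\alpha=x_{i_1,D_1}$, $\beta=x_{i_2,D_2}$. Since $\alpha(a_{i_2})$ equals $a_{i_1}a_{i_2}a_{i_1}$ or $a_{i_2}$ according to whether $i_2\in D_1$, the entry $g_k^{\alpha\beta}=\alpha(g_k^{\beta})g_k^{\alpha}$ is one of the words $1,\ a_{i_1},\ a_{i_2},\ a_{i_1}a_{i_2},\ a_{i_2}a_{i_1},\ a_{i_1}a_{i_2}a_{i_1}$, determined by the location of $k$ relative to $D_1$, $D_2$ and $\{i_1,i_2\}$, and $g_k^{\beta\alpha}$ is given by the same recipe with the two indices interchanged. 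If $i_1=i_2$ then both composites equal $x_{i_1,\,D_1\triangle D_2}$, so case (1) always gives commuting elements. If $i_1\neq i_2$, using the $D\leftrightarrow D^{c}$ move I would normalize so that at most one of $i_1\in D_2$, $i_2\in D_1$ holds. When $i_1\in D_2$ (hence $i_2\notin D_1$), the generators in $D_2\setminus(D_1\cup\{i_1\})$ force the reconciling translate $g$ to be trivial, after which a hypothetical element of $D_1\setminus D_2$ would produce an entry that $g=1$ cannot fix; so $D_1\subseteq D_2$, which is case (2), and swapping $i_1,i_2$ gives case (3). When neither $i_1\in D_2$ nor $i_2\in D_1$, the comparison on $D_1\cap D_2$ against the generators outside $D_1\cup D_2\cup\{i_1,i_2\}$ forces $D_1\cap D_2=\varnothing$, which is case (4).

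For the converse I would verify that in each of cases (1)--(4) the two composites have identical multiplier tuples, so in fact $\alpha$ and $\beta$ already commute in $\Aut^0(W_n)$: for (1) both products are $x_{i_1,\,D_1\triangle D_2}$, and for (2)--(4) the computations of $\alpha\beta$ and $\beta\alpha$ on generators agree once one feeds in $D_1\subseteq D_2$, resp. $D_2\subseteq D_1$, resp. $D_1\cap D_2=\varnothing$; the passage to the stated forms is then the set identity $\widetilde{D_2^{c}}=[n]\setminus D_2$, which turns ``$i_1\in D_2$, $i_2\notin D_1$, $D_1\subseteq D_2$'' into $\widetilde{D_1}\cap\widetilde{D_2^{c}}=\varnothing$, and likewise for (3) and (4). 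I expect the main obstacle to be not any single calculation but keeping the bookkeeping honest: the multipliers are defined only modulo $\langle a_k\rangle$, the inner-automorphism ambiguity must be realized by one element $g$ valid at all $k$ simultaneously, and the characterization is phrased up to the substitution $D_j\leftrightarrow D_j^{c}$ --- for instance it is tempting to conclude that a pair with $i_1\in D_2$ and $i_2\in D_1$ never commutes, whereas replacing both domains by their complements may land it in case (4). As a cross-check, and a quick way to discard non-commuting configurations, one can use that each $x_{i,D}$ is an involution, so $\alpha$ and $\beta$ commute in $\Out^0(W_n)$ if and only if $(x_{i_1,D_1}x_{i_2,D_2})^2\in\Inn(W_n)$.
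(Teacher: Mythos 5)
The paper does not actually prove this lemma; its entire ``proof'' is the one--line citation ``This follows from Lemma 4.3 in \cite{GPR}.'' Your proposal is therefore a genuinely different route: a self-contained computational verification rather than a deferral to the literature. The cocycle formalism you set up is sound and is, in fact, the natural machinery here: the centralizer of $a_k$ in $W_n$ is indeed $\langle a_k\rangle$ (so multipliers are well defined modulo $a_k$), the composition rule $g_k^{\alpha\beta}\equiv\alpha(g_k^{\beta})g_k^{\alpha}$ is correct, and ``commute in $\Out^0$'' translating to the existence of a single $g$ with $g_k^{\alpha\beta}\equiv g\,g_k^{\beta\alpha}\pmod{a_k}$ for all $k$ is exactly right. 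I also checked the easy direction by computing multipliers in cases (2) and (4): the tuples of $\alpha\beta$ and $\beta\alpha$ agree on the nose, so commutation holds already in $\Aut^0(W_n)$, as you claim. Your closing caveat --- that $i_1\in D_2$ and $i_2\in D_1$ can land in case (4) after complementing both domains --- is an important observation that the lemma's four listed conditions only exhaust the truth after applying the $D\leftrightarrow D^{c}$ moves (equivalently, with the paper's canonical representative convention, the fourth disjointness pattern $\widetilde{D_1^{c}}\cap\widetilde{D_2^{c}}=\varnothing$ cannot occur because the minimal index always lies in both).

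The one place where the write-up is looser than it should be is the forcing of $g=1$ in the ``only if'' direction. You say that the generators in $D_2\setminus(D_1\cup\{i_1\})$ pin $g$ down, but that set may be empty or a singleton, and a single constraint of the form $g\in\{1,\,a_{i_2}a_k a_{i_2}\}$ does not determine $g$. The clean fix is to observe that for every $k\notin D_1\setminus D_2$ (in case (a)) one gets $g_k^{\alpha\beta}=g_k^{\beta\alpha}$, hence $g\in\langle a_k\rangle$ up to the ambient ambiguity; since $i_1$ and $i_2$ are always two distinct indices outside $D_1\setminus D_2$, the intersection $\langle a_{i_1}\rangle\cap\langle a_{i_2}\rangle=\{1\}$ forces $g=1$ (equivalently, compute $(x_{i_1,D_1}x_{i_2,D_2})^2$ and note it fixes $a_{i_1}$ and $a_{i_2}$, so if it were inner it would be trivial). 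Once $g=1$, an element of $D_1\setminus D_2$ contributes the word $a_{i_2}a_{i_1}a_{i_2}a_{i_1}$ as a multiplier of $\gamma^{2}$, which does not lie in any $\langle a_k\rangle$, giving the contradiction; the same pattern, with $D_1\cap D_2$ in place of $D_1\setminus D_2$, handles case (c). With that repair, the argument is complete and gives a transparent, elementary proof of what the paper treats as an external black box.
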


\begin{proof}
This follows from Lemma 4.3 in \cite{GPR}.
\end{proof}

\begin{theorem}
\label{thm:comhyp}
Let $x_{i_1,D_1}, x_{i_2,D_2}, \dotsc, x_{i_p,D_p}$ be partial conjugations in $\Out^0(W_n) \leq \Aut^0(W_n)$. Then there exists a hypertree $\Theta \in \HTc_n$ that carries all of the \[x_{i_1,D_1}, x_{i_2,D_2}, \dotsc, x_{i_p,D_p}\] if and only if  they pairwise commute. 
\end{theorem}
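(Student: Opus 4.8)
\emph{Strategy.} Prove the two implications separately. The forward direction (a common carrier forces pairwise commutativity) reduces to a pair of partial conjugations and follows from a short argument about simple walks together with Lemma~\ref{lem:compc}; the backward direction is an induction on $n$ in which one cuts $[n]$ along a single partial conjugation from the list.

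\emph{Forward direction.} Suppose the hypertree $\Theta$ carries $x_{i_1,D_1}$ and $x_{i_2,D_2}$; it is enough to show these commute. If $i_1=i_2$ we are in case~(1) of Lemma~\ref{lem:compc}, so assume $i_1\neq i_2$. The one fact I would use about $\Theta$ is that its simple walks are unique, hence a prefix of a simple walk from $u$ is \emph{the} simple walk from $u$ to its final vertex; in particular, if $w$ precedes $j$ on the simple walk from $u$ to $j$, then the simple walk from $u$ to $w$ avoids $j$. Feeding this into the definition of ``carries'' (for $d\in D_k$ and $j\notin D_k$, the walk $d\to j$ visits $i_k$) I would argue according to whether $i_1\in D_2$ and/or $i_2\in D_1$: the configuration $i_1\in D_2,\ i_2\in D_1$ becomes, after replacing $D_2$ by $D_2^c$ (which $\Theta$ also carries, carrying being invariant under $D\mapsto D^c$), the configuration $i_2\in D_1,\ i_1\notin D_2$; if $i_2\notin D_1$ and $i_1\notin D_2$ then $D_1\cap D_2=\varnothing$, which is case~(4); if $i_2\in D_1$ and $i_1\notin D_2$ then $D_2\subseteq D_1$, which is case~(3); and symmetrically case~(2). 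Each deduction is a one-line contradiction from the prefix fact: e.g.\ in the first case, if $v\in D_1\cap D_2$ then the walk $v\to i_1$ visits $i_2$ (since $i_2\notin D_1$) and the walk $v\to i_2$ visits $i_1$ (since $i_1\notin D_2$), but the first walk ends at $i_1$, so its prefix up to $i_2$ avoids $i_1$ --- impossible. Lemma~\ref{lem:compc} then yields commutativity, and iterating over pairs gives the claim.

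\emph{Backward direction.} I would first record the reformulation, a direct consequence of the definition, that $\Theta$ carries $x_{i,D}$ if and only if $D$ is a union of the \emph{branches of $\Theta$ at $i$}, namely the classes on $[n]\setminus\{i\}$ of the equivalence relation ``the simple walk $u\to v$ avoids $i$''. Now induct on $n$. For $n\le 2$ every partial conjugation is trivial in $\Out^0(W_n)$ and $\Theta_n^0$ works. For $n\ge 3$, discard the partial conjugations on the list that are trivial in $\Out^0(W_n)$; if none remain, take $\Theta_n^0$. Otherwise fix a nontrivial $x_{i,D}$ from the list, so $\varnothing\subsetneq D\subsetneq[n]\setminus\{i\}$, and use the decomposition $[n]=\widetilde{D}\cup\widetilde{D^c}$ with $\widetilde{D}\cap\widetilde{D^c}=\{i\}$. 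Appealing to Lemma~\ref{lem:compc}, whose commutativity criterion mentions only the acting letters and domains and not the ambient vertex set, I would show that every other $x_{i_j,D_j}$ either has acting letter $i$, or (after possibly replacing $D_j$ by $D_j^c$) satisfies $\widetilde{D_j}\subseteq\widetilde{D}$ or $\widetilde{D_j}\subseteq\widetilde{D^c}$; and that in the first alternative, when $i_j\neq i$, necessarily $i\notin D_j$. Form the list $S_A$ in $\Out^0$ of the universal RACG on $\widetilde{D}$ consisting of those $x_{i_j,D_j}$ with $\widetilde{D_j}\subseteq\widetilde{D}$ together with $x_{i,D'\cap D}$ for each $x_{i,D'}$ on the original list with acting letter $i$, and symmetrically $S_B$ on $\widetilde{D^c}$; another application of Lemma~\ref{lem:compc} shows $S_A$ and $S_B$ are again pairwise commuting. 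By induction there are hypertrees $\Theta_A$ on $\widetilde{D}$ and $\Theta_B$ on $\widetilde{D^c}$ carrying $S_A$ and $S_B$, and I set $\Theta$ to be the union of their hyperedge sets, a hypergraph on $[n]$ glued along $i$. As the hyperedges of $\Theta_A$ and $\Theta_B$ can meet only inside $\{i\}$, any simple walk crossing between the two sides passes through $i$ exactly once, so $\Theta$ is a hypertree. It remains to see that $\Theta$ carries every original $x_{i_j,D_j}$, using that the branches of $\Theta$ at $i$ are the disjoint union of the branches of $\Theta_A$ and of $\Theta_B$ at $i$, while at a vertex $v\neq i$ on, say, the $\widetilde{D}$-side the branches of $\Theta$ are those of $\Theta_A$ at $v$ except that the branch containing $i$ is enlarged by all of $D^c$. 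For acting letter $i$ this follows since $\Theta_A,\Theta_B$ carry the $\cap D$ and $\cap D^c$ pieces; for $i_j\neq i$ with $\widetilde{D_j}\subseteq\widetilde{D}$, the condition $i\notin D_j$ forces $D_j$ to avoid the enlarged branch, so saturation at $i_j$ is inherited from $\Theta_A$.

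\emph{Main obstacle.} The substance is all in the backward direction: proving that the sublists $S_A,S_B$ stay pairwise commuting and that the glued hypertree carries every original partial conjugation. Both rest on the complement-free form of Lemma~\ref{lem:compc} and on the exact behavior of branches under the gluing $[n]=\widetilde{D}\cup\widetilde{D^c}$; the crux is extracting $i\notin D_j$ from the commutativity of $x_{i_j,D_j}$ with $x_{i,D}$ in the case $\widetilde{D_j}\subseteq\widetilde{D}$, $i_j\neq i$.
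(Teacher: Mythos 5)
Your proposal is correct in outline and takes a genuinely different route from the paper's. In the forward direction you re-derive Piggott's Lemma~4.4 (which the paper simply cites) via a ``prefix of a simple walk is a simple walk'' argument plus Lemma~\ref{lem:compc}; this works. The real divergence is in the backward direction. The paper's proof is a \emph{bottom-up} iteration on $p$: it builds a growing hypertree $\Theta_1 \leq \Theta_2 \leq \dotsb$, at step $k$ intersecting every hyperedge of $\Theta_{k-1}$ with $\widetilde{D_k}$ and $\widetilde{D_k^c}$, and uses commutativity to prove that the resulting pieces share $\{i_k\}$. Your proof is a \emph{top-down} recursion on $n$: you fix one partial conjugation $x_{i,D}$, cut $[n]$ as $\widetilde{D} \cup \widetilde{D^c}$, sort the remaining partial conjugations (after choosing the domain representative with $i \notin D_j$) onto one side or the other by Lemma~\ref{lem:compc}, solve the two sub-problems recursively, and glue the resulting hypertrees along $\{i\}$. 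Both approaches hinge on the same observation --- the partition $\widetilde{D}/\widetilde{D^c}$ interacts cleanly with hypertrees and with the commutativity criterion --- but yours cuts the problem itself rather than the partial solution. The details you flag as the ``main obstacle'' do go through: the non-complement form of Lemma~\ref{lem:compc} (conditions~(1)--(4) phrased via $i_1 \in D_2$, $D_1 \subseteq D_2$, etc.) makes no reference to the ambient $[n]$, so it is inherited by the sub-universal-RACGs on $\widetilde{D}$ and $\widetilde{D^c}$; the $i \notin D_j$ extraction is immediate from cases~(2)--(4) once the representative is chosen; and the branch bookkeeping under gluing is exactly as you describe. One minor caveat: when checking that the ``intersected'' elements $x_{i,D' \cap D}$ still commute with the $x_{i_j,D_j}$ on the $\widetilde{D}$-side, the fact that case~(2) of Lemma~\ref{lem:compc} is ruled out (since $i \notin D_j$) is what makes the verification a two-line case analysis rather than a headache, so that step deserves a sentence. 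What the paper's route buys is a single running induction with a concrete, explicit folding description (yielding Corollary~\ref{cor:comhyp} nearly for free); what your route buys is a cleaner structural recursion on $n$ at the price of first establishing that commutativity and carrying descend to the sub-problems.
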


\begin{proof}
One direction is Lemma 4.4 in \cite{Piggott}, and is reproduced here for convenience:

Suppose that $\Theta$ carries each of the $x_{i_j,D_j}$ for $j \in \{1,\dotsc,p\}$. If $i_k \neq i_l$, then $x_{i_k,D_k}$ and $x_{i_l,D_l}$ commute by Lemma 1.1 in \cite{MM}. Because the $\mathbb{Z}_2$ factors in $W_n$ are abelian (or just directly from the definition of partial conjugation), whenever $i_k = i_l$, then $x_{i_k,D_k}$ and $x_{i_l,D_l}$ commute. 

Conversely, suppose that $x_{i_1,D_1}, x_{i_2,D_2}, \dotsc, x_{i_p,D_p}$ pairwise commute. 


We will build the hypertree $\Theta$ inductively. 

Let $\Theta_1$ be the hypertree on $[n]$ that has two hyperedges: one containing $\widetilde{D_1} = D_1 \cup \{i_1\}$ and the other containing $\widetilde{D_1^c} = D_1^c \cup \{i_1\} = [n] \setminus D_1$. Any simple walk from $D_1$ to $D_1^c$ must pass through $i$, so $\Theta_1$ carries $x_{i_1,D_1}$. In fact, the only automorphisms carried by $\Theta_1$ are the identity and $x_{i_1,D_1}$.

Now inductively assume that there is a hypertree $\Theta_{k-1}$ on $[n]$ that carries $x_{i_1,D_1}, x_{i_2,D_2}, \dotsc, x_{i_{k-1},D_{k-1}}$ for $1 \leq k-1 \leq p-1$ and that $x_{i_k,D_k}$ commutes with all automorphisms carried by $\Theta_{k-1}$.  Since $\Theta_{k-1}$ is a hypertree, every hypervertex is in at least one hyperedge, and any two hyperedges are either disjoint or else intersect in exactly one hypervertex. Consider $x_{i_k,D_k}$, and denote the hyperedges of $\Theta_{k-1}$ by $E_{k-1}$. Now define $E_{k}$ to be the set of non-empty intersections between the hyperedges of $\Theta_{k-1}$ and either $D_k$ or $D_k^c$, i.e.,
\[
E_{k} := \left(\left\{E \cap \widetilde{D_k} \mid E \in E_{k-1}\right\} \cup \left \{ E \cap \widetilde{D_k^c} \mid E \in E_{k-1} \right \}\right) \setminus \varnothing,
\]
and let $\Theta_k$ be the hypergraph defined on $[n]$ with $E_{k}$ as its hyperedges. Suppose that both $E^1 = E \cap \widetilde{D_k}$ and $E^2 = E \cap \widetilde{D_k^c}$ are non-empty. We claim that $i_k \in E$ and so $E^1 \cap E^2 = \{i_k\}$:

$\Theta_{k-1}$ is a hypertree that carries at least one non-identity automorphism, so it has at least 2 hyperedges, and thus there is a neighboring hyperedge to $E$, $E'$, such that $E \cap E' = \{m\}$ for some $m \in [n]$. If $m = i_k$, then $i_k \in E$. Otherwise, suppose that $m \neq i_k$. Since $\Theta_{k-1}$ is a hypertree, $\Theta_{k-1} \setminus \{m\}$ is disconnected. Let $D_m$ be the connected component of $\Theta_{k-1} \setminus \{m\}$ that contains $E \setminus \{m\}$ and $D_m^c$ be the union of the rest of the components. Then $\Theta_{k-1}$ must carry $x_{m,D_m}$. Thus, by assumption, $x_{i_k,D_k}$ commutes with $x_{m,D_m}$. If $i_k \not\in E$, then $i_k \not\in D_m$ since $E \subseteq D_m$. Also, the non-empty element of $E^1$ can't be $i_k$, so it must be an element of $E \cap D_k$, i.e., $E \cap D_k \neq \varnothing$. But the same is true for $E^2$, $E \cap D_k^c \neq \varnothing$. By Lemma \ref{lem:compc}, this leaves only the option that $m \in D_k$ and $D_m \subseteq D_k$, which contradicts that $E \cap D_k^c \neq \varnothing$. Thus, $i_k$ must be in $E$.

Suppose that $v=v_0 \overset{e_1}{\rightarrow} v_1 \overset{e_2}{\rightarrow}  \dotsb \overset{e_p}{\rightarrow} v_p = v'$ is the unique simple walk in $\Theta_{k-1}$ from $v$ to $v'$. In $\Theta_k$, each $e_j$ is partitioned into (at most) two hyperedges, $e_j^1 = e_j \cap \widetilde{D_k}$ and $e_j^2 = e_j \cap \widetilde{D_k^c}$. If $v_{j-1}$ and $v_{j}$ are both in the same hyperedge, say $e_j^1$, then just replace $e_j$ with $e_j^1$ in the walk above. Otherwise, without loss of generality, $v_{j-1} \in e_j^1$ and $v_{j} \in e_j^2$. Since both $e_j^1$ and $e_j^2$ are non-empty, from above we know that $e_j^1 \cap e_j^2 = \{i_k\}$. Now replace $v_{j-1} \overset{e_j}{\rightarrow} v_{j}$ in the walk with $v_{j-1} \overset{e_j^1}{\rightarrow} i_k \overset{e_j^2}{\rightarrow} v_{j}$. This can only happen once in the walk since otherwise $i_k$ would be in two non-consecutive hyperedges, and a different, shorter simple walk would have been possible in $\Theta_{k-1}$. So this construction shows that $\Theta_k$ is a hypertree. It carries $x_{i_k,D_k}$ since if $v_0 \in D_k$ and $v_p \in D_k^c$, then there must be some point in the walk where the hyperedges go from the $e_j^1$ to the $e_j^2$, at which point either $i_k$ was already in the walk or else it gets inserted in the construction. 

It is immediate that $\Theta_{k-1} \leq \Theta_{k}$ since folding $E \cap \widetilde{D_k}$ and $E \cap \widetilde{D_k^c}$ into $E$ is necessary only when both are non-empty. In particular, that means that $\Theta_k$ carries all of the automorphisms that $\Theta_{k-1}$ carried. Recall that $x_{i_j,D_j}$ is carried by a hypertree $\Lambda$ if and only if $D_j$ is a union of connected components (other than the one containing the lowest index of $[n]\setminus \{i_j\}$) of $\Lambda \setminus \{i_j\}$ (\cite{Piggott}). 

Let $D_{m_1}, D_{m_2}, \dotsc, D_{m_l}$ be the connected components of $\Theta_{k} \setminus \{i_k\}$ other than the one with minimal index. These exactly correspond with the analogous connected components in $\Theta_{k-1}$ except that one is added each time a hyperedge (which had to contain $i_k$) was split. Since the number of outer automorphisms carried by a hypertree is $2^h$ (where $h$ = height = number of hyperedges minus one), this unfolding increases the height by exactly the number of edges with $e_j^1 \cap e_j^2 = \{i_k\}$. All of the $x_{i_k,D_j}$ carried by $\Theta_{k-1}$ are products of the $x_{i_k,D_{m_s}}$. So by a counting argument, all of the automorphisms carried by $\Theta_k$ are given by products of the $x_{i_j,D_j}$ (with $i_j \neq i_k$ and $1 \leq j \leq k$) and the $x_{i_k,D_{m_s}}$ (with $1 \leq s \leq l$). It suffices to prove that all of these commute with the remaining automorphisms on our list. 

Now, let $1 \leq j \leq p,\ 1 \leq s \leq l$ and consider $x_{i_j,D_j}$ and $x_{i_k,D_{m_s}}$. By construction, $D_{m_s} \subseteq D_k$ or $D_{m_s} \subseteq D_k^c$. It suffices to show that $x_{i_j,D_j}$ or $x_{i_j,D_j^c}$ commutes with $x_{i_k,D_{m_s}}$ or $x_{i_k,D_{m_s}^c}$ since $\Out^0(W_n)$ as a quotient is isomorphic to $\Out^0(W_n)$ as a subgroup of $\Aut^0(W_n)$. So without loss of generality, suppose that $D_{m_s} \subseteq D_k$, and so $\widetilde{D_{m_s}} \subseteq \widetilde{D_k}$. If $i_k = i_j$, this is trivial, so suppose not. By the definition of $x_{i_k,D_{m_s}}$, there is some component $D$ of $\Theta_{k-1} \setminus \{i_k\}$ such that $D_{m_s} = D_k \cap D$, and thus $\widetilde{D_{m_s}} = \widetilde{D_k} \cap \widetilde{D}$. $x_{i_k,D}$ is then carried by $\Theta_{k-1}$ and so commutes with $x_{i_j,D_j}$. Now $\widetilde{D_{m_s}} \cap \widetilde{D_j} = \left( \widetilde{D_k} \cap \widetilde{D_j} \right) \cap \left( \widetilde{D} \cap \widetilde{D_j} \right)$, and if this is empty, we are done. Similarly, $\widetilde{D_{m_s}} \cap \widetilde{D_j^c} = \left( \widetilde{D_k} \cap \widetilde{D_j^c} \right) \cap \left( \widetilde{D} \cap \widetilde{D_j^c} \right)$, and if this is empty, we are done. Otherwise, all four intersections must be non-empty. But by Lemma \ref{lem:compc}, this forces $\widetilde{D_k^c} \cap \widetilde{D_j} = \varnothing$ and $\widetilde{D^c} \cap \widetilde{D_j} = \varnothing$, i.e., $i_k \not\in D_j,\ i_j \in D_k,\ D_j \subseteq D_k$ and $i_k \not\in D_j,\ i_j \in D,\ D_j \subseteq D$. Thus, $i_k \not\in D_j,\ i_j \in D_{m_s},\ D_j \subseteq D_{m_s}$, and so we are done again. Thus, every automorphism on our list commutes with every automorphism carried by $\Theta_k$. This completes the induction. 

\end{proof}

In fact, examining the proof of Theorem \ref{thm:comhyp}, we actually proved a stronger corollary. 

\begin{cor}\label{cor:comhyp}
Given a hypertree $\Theta \in \HTc_n$ and a partial conjugation $x_{i,D}$, then there exists an unfolding of $\Theta$ to a hypertree $\Lambda \geq \Theta$ that carries $x_{i,D}$ if and only if $x_{i,D}$ commutes with every automorphism carried by $\Theta$. 
\end{cor}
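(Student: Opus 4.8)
The plan is to obtain both implications from material already established: the nontrivial implication — that commutation forces the existence of a suitable unfolding — is, almost verbatim, the inductive step in the proof of Theorem~\ref{thm:comhyp}, while the converse is a quick consequence of the easy half of that theorem together with Remark~\ref{rem:carry}.

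\emph{The converse direction.} Suppose $\Lambda \ge \Theta$ is a hypertree carrying $x_{i,D}$. By Remark~\ref{rem:carry}, $\Lambda$ carries every automorphism that $\Theta$ carries; in particular it carries $x_{i,D}$ together with every partial conjugation carried by $\Theta$. Applying the forward (easy) implication of Theorem~\ref{thm:comhyp} to the pair consisting of $x_{i,D}$ and any single partial conjugation $y$ carried by $\Theta$, we conclude that $x_{i,D}$ and $y$ commute. Since by definition every automorphism carried by $\Theta$ is a product of partial conjugations carried by $\Theta$, each such automorphism commutes with $x_{i,D}$.

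\emph{The forward direction.} Assume $x_{i,D}$ commutes with every automorphism carried by $\Theta$. After disposing of the trivial case in which $x_{i,D}$ is the identity (where $\Lambda = \Theta$ already works), we may assume $\widetilde D$ and $\widetilde{D^c}$ each have at least two elements. I would then run the construction of $\Theta_k$ from the inductive step of Theorem~\ref{thm:comhyp} with $\Theta_{k-1} := \Theta$ and $x_{i_k,D_k} := x_{i,D}$: define $\Lambda$ to be the hypergraph on $[n]$ with hyperedge set
\[
E_\Lambda := \left( \left\{ E \cap \widetilde D \mid E \in E_\Theta \right\} \cup \left\{ E \cap \widetilde{D^c} \mid E \in E_\Theta \right\} \right) \setminus \varnothing.
\]
The argument given there — that whenever both $E \cap \widetilde D$ and $E \cap \widetilde{D^c}$ are nonempty one has $i \in E$, so the two pieces meet exactly in $\{i\}$, whence each unique simple walk of $\Theta$ lifts to a unique simple walk of $\Lambda$ — shows that $\Lambda$ is a hypertree, that $\Theta \le \Lambda$ (each split hyperedge is undone by a single fold), and that $\Lambda$ carries $x_{i,D}$. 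The one point not literally covered by the proof of Theorem~\ref{thm:comhyp} is the case $\Theta = \Theta_n^0$, which carries only the identity and so has the single hyperedge $[n]$: here $E_\Lambda = \{\widetilde D, \widetilde{D^c}\}$, i.e., $\Lambda$ is the hypertree $\Theta_1$ appearing in that proof, which is indeed an unfolding of $\Theta_n^0$ carrying $x_{i,D}$.

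\emph{Main obstacle.} There is no genuine difficulty here; the only thing to verify is that the construction in the inductive step of Theorem~\ref{thm:comhyp} uses the earlier partial conjugations $x_{i_1,D_1},\dots,x_{i_{k-1},D_{k-1}}$ only through the hypertree $\Theta_{k-1}$ and the hypothesis that $x_{i_k,D_k}$ commutes with every automorphism carried by $\Theta_{k-1}$ — both of which are exactly what the corollary supplies — and that the portion of that step needed here (producing the hypertree and checking that it carries $x_{i,D}$) invokes Lemma~\ref{lem:compc} only for pairs of the permitted form. Inspection of the proof confirms this, so no new ideas are required.
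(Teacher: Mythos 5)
Your proof is correct and takes the same approach as the paper, which establishes the corollary only by remarking that it follows from re-examining the inductive step of Theorem \ref{thm:comhyp}. Your fleshing out is careful and notably catches the edge case $\Theta = \Theta_n^0$, where the paper's justification of the claim ``$i \in E$'' does not literally apply (the hypertree having only one hyperedge, hence no neighboring hyperedge $E'$) but the conclusion holds trivially with $\Lambda$ equal to the hypertree $\Theta_1$ from the base case.
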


Now that we know how the hypertree complex encodes the commuting information of $\Out^0(W_n)$, we can use this to build a complex that $\Out^0(W_n)$ can act on.

\section{McCullough-Miller Space}\label{sec:mmspace}

McCullough and Miller originally \cite{MM} defined their complex using labeled bipartite trees, but McCammond and Meier \cite{McCammond} showed an equivalent way to define the space using $\HTc_n$. Piggott \cite{Piggott} then characterized the automorphism groups of these spaces. 

McCullough-Miller space is constructed by taking a copy of $\HTc_n$ for each element of $\Out^0(W_n)$ and then gluing these copies together according to the hypertree carrying relation. 

\begin{definition}\label{def:MM}
First, define an equivalence relation $\sim$ on $\Out^0(W_n) \times \HTc_n$ as follows: $(\alpha,\Theta) \sim (\beta, \Lambda)$ if and only if $\Theta = \Lambda$ and $\alpha^{-1} \beta$ is carried by $\Theta$. Write $[\alpha,\Theta]$ for the $\sim$-equivalence class of $(\alpha,\Theta)$ and let $\Kc_n$ be the set of $\sim$-equivalence classes. 

Now, define a partial order $\leq$ on $\Kc_n$: $[\alpha,\Theta] \leq [\beta,\Lambda]$ if and only if $\Lambda$ folds to $\Theta$ and $\alpha^{-1}\beta$ is carried by $\Lambda$, i.e., $\Theta \leq \Lambda$ in $\HTc_n$ and $[\alpha,\Lambda] = [\beta,\Lambda]$. 

\emph{McCullough-Miller space} $\K_n$ is the simplicial realization of $(\Kc_n,\leq)$. We will often abuse notation and have $[\alpha,\Theta]$ refer to both its equivalence class in $\Kc_n$ as well as its corresponding vertex in $\K_n$. 
\end{definition}

\begin{remark}
For a hypertree $\Theta$ of height $h$ in $\HTc_n$, $\Theta$ carries $2^h$ automorphisms, and so $[\alpha,\Theta]$ will be glued to $2^h - 1$ other copies of $\Theta$. In particular, $\left[\alpha,\Theta_n^0\right]$ is a singleton, is not glued to any other element, and $\left[\alpha,\Theta_n^0\right] \leq [\beta,\Lambda]$ if and only if $\left[\alpha,\Lambda\right] = \left[\beta,\Lambda\right]$. These are called \emph{nuclear vertices} of $\K_n$. So $\K_n$ consists of partially glued copies of $\HT_n$ indexed by $\Out^0(W_n)$. 
\end{remark}

Recall that $\Sigma_n$ acts on $\HTc_n$ by permuting labels, and that $\Out(W_n) \cong \Out^0(W_n) \rtimes \Sigma_n$. So any $\alpha \in \Out(W_n)$ has a unique representative $\phi \sigma$, where $\phi \in \Out^0(W_n)$, $\sigma \in \Sigma_n$, and $\alpha = \phi \sigma$ in $\Out(W_n)$. 

\begin{definition}
$\Out(W_n)$ acts on $\Out^0(W_n) \times \HTc_n$ by:
\[
\phi \sigma \cdot \left(\alpha, \Theta \right) = \left(\phi (\sigma \alpha \sigma^{-1}), \sigma \Theta \right)
\]
Since $\Out^0(W_n) \trianglelefteq \Out(W_n)$, $(\sigma \alpha \sigma^{-1}) \in \Out^0(W_n)$, and so $\phi \sigma \alpha \sigma^{-1} \in \Out^0(W_n)$. The action of $\sigma$ on $\Theta$ is by permuting the labels. 

This action of $\Out(W_n)$ preserves $\sim$ as well as the partial order $\leq$. Thus, this descends to an action of $\Out(W_n)$ on $\Kc_n$ by order automorphisms as well as $\K_n$ by simplicial automorphisms \cite{Piggott}. 
\end{definition}

\begin{example*}
Let $(1\ 2) \in \Sigma_4$ be the transposition that exchanges 1 and 2, and let $(1 - 2 - 3 - 4)$ be the line (hyper)tree that contains the edges $\{1,2\},\ \{2,3\},\ \{3,4\}$.
\begin{align*}
&\phantom{= \ } \left(x_{1,\{3\}}, (1\ 2)\right) \cdot \left[x_{2,\{4\}}, (1 - 2 - 3 - 4)\right] \\
&= \left[x_{1,\{3\}} \left((1\ 2) x_{2,\{4\}} (1\ 2)^{-1} \right), (1\ 2) \cdot (1 - 2 - 3 - 4) \right] \\
&= \left[x_{1,\{3\}} x_{1,\{4\}}, (2 - 1 - 3 - 4) \right] \\
&= \left[x_{1,\{3,4\}}, (2 - 1 - 3 - 4) \right] \\
\end{align*}
\end{example*}

As with $\HT_n$, this action induces an injective map from $\Out(W_n)$ into both $\Aut(\Kc_n,\leq)$ and $\Aut(K_n)$, and one might wonder whether or not there any other other hidden symmetries in these spaces. The answer is once again in the negative, and so these spaces serve as accurate combinatorial and topological models for $\Out(W_n)$. 

\begin{theorem}[Piggott \cite{Piggott}, Thm 1.1]
For $n \geq 4$, 
\[
\Aut(\Kc_n,\leq) \cong \Aut(K_n) \cong \Out(W_n).
\] 
\end{theorem}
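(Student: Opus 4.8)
The plan is to leverage the two injections already in hand. The action gives an injection $\Out(W_n)\hookrightarrow\Aut(\Kc_n,\leq)$, and realizing an order automorphism as a simplicial automorphism of the order complex gives an injection $\Aut(\Kc_n,\leq)\hookrightarrow\Aut(\K_n)$ (injective since an order automorphism trivial on objects is trivial). Their composite is the natural map $\Out(W_n)\to\Aut(\K_n)$, which is injective. A chain of injections whose composite is surjective forces every map in the chain to be an isomorphism, so it suffices to prove the single statement that every simplicial automorphism of $\K_n$ is realized by an element of $\Out(W_n)$; this gives all three identifications at once.

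So fix $\Phi\in\Aut(\K_n)$. The first step is an intrinsic combinatorial characterization of the nuclear vertices $[\alpha,\Theta_n^0]$: they are exactly the minimal elements of $\Kc_n$ (if $[\alpha,\Theta]$ is minimal then $\Theta=\Theta_n^0$, since $[\alpha,\Theta_n^0]<[\alpha,\Theta]$ whenever $\Theta\neq\Theta_n^0$), and — what matters here — this class is detected by local data. Concretely, $\overline{\St}([\alpha,\Theta_n^0])$ is the full subcomplex of $\K_n$ on the vertex set $\{[\alpha,\Lambda]:\Lambda\in\HTc_n\}$, hence is isomorphic to $\HT_n$, whereas a positive-height vertex $[\alpha,\Theta]$ with $\Theta$ of height $h\geq 1$ is glued to $2^{h}-1$ other copies and its closed star is strictly larger than, and not isomorphic to, $\HT_n$. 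Thus $\Phi$ permutes nuclear vertices. Since $\Out^0(W_n)$ acts by left multiplication on the first coordinate and is simply transitive on nuclear vertices, we may post-compose $\Phi$ with an element of $\Out^0(W_n)$ so that $\Phi$ fixes $[1,\Theta_n^0]$; then $\Phi$ preserves $\overline{\St}([1,\Theta_n^0])\cong\HT_n$, and by Piggott's theorem $\Aut(\HT_n)\cong\Sigma_n$ the restriction equals the action of some $\sigma\in\Sigma_n\leq\Out(W_n)$, so after post-composing with $\sigma^{-1}$ we may assume $\Phi$ fixes $\overline{\St}([1,\Theta_n^0])$ pointwise.

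It remains to show such a $\Phi$ is the identity, which I would prove by propagation along the graph $\mathcal{N}$ on the nuclear vertices in which $[\alpha,\Theta_n^0]$ and $[\alpha x_{i,\{j\}},\Theta_n^0]$ are adjacent for all $i\neq j$: these are exactly the two nuclear vertices below the single vertex $[\alpha,\Omega_n^{i,j}]$, so the edge is visible in $\K_n$, and $\mathcal{N}$ is connected because the elementary partial conjugations $x_{i,\{j\}}$ generate $\Out^0(W_n)$. Suppose $\Phi$ fixes $\overline{\St}([\alpha,\Theta_n^0])$ pointwise. Then $\Phi$ fixes $[\alpha,\Omega_n^{i,j}]$, hence fixes both nuclear vertices beneath it, so $\Phi([\alpha x_{i,\{j\}},\Theta_n^0])=[\alpha x_{i,\{j\}},\Theta_n^0]$; moreover $\Phi$ fixes $[\alpha,T]$ for every hypertree $T$ carrying $x_{i,\{j\}}$, and each such vertex equals the vertex labeled $T$ in the copy $\overline{\St}([\alpha x_{i,\{j\}},\Theta_n^0])\cong\HT_n$. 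Applying $\Aut(\HT_n)\cong\Sigma_n$ to the restriction of $\Phi$ to this copy, we get a permutation fixing the labeled hypertree $T$ for every tree $T$ in which $j$ is a leaf adjacent to $i$; taking two such trees that are paths with distinct reversal symmetries — possible precisely because $n\geq 4$ makes $[n]\setminus\{i,j\}$ have at least two elements — forces this permutation to be trivial. Hence $\Phi$ fixes $\overline{\St}([\alpha x_{i,\{j\}},\Theta_n^0])$ pointwise, and by connectedness of $\mathcal{N}$ it fixes the closed star of every nuclear vertex pointwise. Since every vertex of $\K_n$ lies in the closed star of a nuclear vertex, $\Phi$ is the identity on vertices, so $\Phi=\mathrm{id}$, completing the surjectivity and hence the theorem.

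The step I expect to be the main obstacle is the intrinsic recognition of the nuclear vertices: one must rule out any "exotic" vertex whose closed star is abstractly isomorphic to $\HT_n$ (or, using a different invariant, has the same link or the same number of top-dimensional simplices through it). This comes down to a careful comparison of local structures, using the classification of low-height hypertrees and the precise way the copies of $\HT_n$ are glued along carried automorphisms. The propagation step is comparatively routine once the right pair of path-trees in the overlap $\overline{\St}([\alpha,\Theta_n^0])\cap\overline{\St}([\alpha x_{i,\{j\}},\Theta_n^0])$ is chosen, and it is exactly there that $n\geq 4$ enters (for $n=3$ one has $\Out^0(W_3)\cong W_3$ and $\K_3$ acquires extra symmetry).
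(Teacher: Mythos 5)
The paper does not actually prove this statement; it cites it verbatim from Piggott's paper, so there is no in-paper argument to compare against. Your proposal is therefore best read as an attempted reconstruction of Piggott's proof, and its overall architecture (injections $\Out(W_n)\hookrightarrow\Aut(\Kc_n,\leq)\hookrightarrow\Aut(\K_n)$; reduce to showing the composite is surjective; intrinsically locate nuclear vertices; use $\overline{\St}([\alpha,\Theta_n^0])\cong\HT_n$ and $\Aut(\HT_n)\cong\Sigma_n$ to normalize $\Phi$; propagate along a connected graph of nuclear vertices, exploiting path-trees through the edge $j$--$i$ with independent reversal symmetries to see where $n\geq 4$ enters) is a plausible shape for such a proof, and the propagation half looks sound.

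There is, however, a genuine gap, and the specific claim you offer to close it is false. You assert that the closed star of a positive-height vertex $[\alpha,\Theta]$ with $\Theta$ of height $h\geq 1$ is ``strictly larger than, and not isomorphic to, $\HT_n$.'' It is not larger. Although $[\alpha,\Theta]$ lies in $2^h$ glued copies of $\HT_n$ (one for each nuclear vertex beneath it), within each such copy the simplices incident to $[\alpha,\Theta]$ form only the closed star of $\Theta$ inside that copy, a \emph{proper} subcomplex of $\HT_n$ whenever $\Theta\neq\Theta_n^0$. The resulting closed star of $[\alpha,\Theta]$ in $\K_n$ is a union of $2^h$ such small subcomplexes and is substantially \emph{smaller} than $\HT_n$, not larger. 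The $n=4$ data in this very paper make this concrete: the link of a nuclear vertex has $28$ vertices (Figure \ref{fig:lknuc}), while the links of the $\Omega$, $L$, and $S$ vertices have $5$, $8$, and $10$ vertices respectively (Figures \ref{fig:lkomeg}, \ref{fig:lkline}, \ref{fig:lkstar}). So the correct discriminant points the other way (nuclear vertices have the uniquely largest links), and that fact would need to be proved for general $n$, not asserted; being in more copies of $\HT_n$ does not translate into a larger star. You do flag the recognition step as the main obstacle, which is the right instinct, but the specific argument you put forward to discharge it would not survive scrutiny, and without a correct recognition lemma the rest of the argument has nothing to stand on.
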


\begin{remark}
As in the case of $\HT_n$ and $\Sigma_n$, this shows that $\Kc_n$ is an accurate combinatorial model and $\K_n$ is an accurate topological or simplicial model for $\Out(W_n)$. In fact, $\Out(W_n)$ acts on $\K_n$ properly discontinuously and co-compactly by simplicial automorphisms (\cite{Piggott}), but $\K_n$ has no \emph{a priori} metric on it. To be an accurate \emph{geometric} model, we will need to endow $\K_n$ with a metric to turn it into a geodesic metric space such that the action of $\Out(W_n)$ is by isometries. Then $\K_n$ will be quasi-isometric to $\Out(W_n)$ (and also its finite index subgroup, $\Out^0(W_n)$), and they will have the same large-scale geometry. There are many ways to do this, such as assigning the piecewise Euclidean metric with equilateral triangles to $\K_n$.

However, this metric does not turn $\K_n$ into a $\cat(0)$ space. If we wish to use this space to show that $\Out(W_n)$ is a $\cat(0)$ group, then we will need to pick a different metric. The metric will need to be $\cat(0)$ as well as equivariant with respect to the $\Out(W_n)$ or $\Out^0(W_n)$ action on $\K_n$. As we show in Section \ref{sec:metmm4}, no such (piecewise $M_{\kappa}$) metric turns out to exist. 
\end{remark}


Now, let $[\alpha, \Theta] \in K_n$ and suppose that $[\beta, \Theta']$ is another point where $\Theta$ and $\Theta'$ are isomorphic as unlabeled hypertrees. Since $\Theta'$ differs from $\Theta$ only in its labeling, there is a permutation $\sigma \in \Sigma_n$ such that $\sigma \cdot \Theta = \Theta'$ \cite{Piggott}. Since $\Out^0(W_n) \trianglelefteq \Out(W_n)$, $\sigma \alpha^{-1} \sigma^{-1} \in \Out^0(W_n)$, and thus $\phi = \beta \sigma \alpha^{-1} \sigma^{-1} \in \Out^0(W_n)$. Then we have that 

\begin{align*}
\phi \sigma \cdot \left[\alpha, \Theta \right] &= \left[\phi (\sigma \alpha \sigma^{-1}), \sigma \Theta \right] \\ 
&= \left[\beta \sigma \alpha^{-1} \sigma^{-1} (\sigma \alpha \sigma^{-1}), \Theta' \right] \\
&= \left[\beta, \Theta' \right].
\end{align*}

Thus, $\Out(W_n)$ acts transitively on the subsets of $\Kc_n$ where the $\Out^0(W_n)$ labels can be anything and the unlabeled hypertree isomorphism classes are preserved. Since the action of $\Sigma_n$ on $\HTc_n$ only permutes labels, it preserves unlabeled isomorphisms classes, and so the full action of $\Out(W_n)$ on $\Kc_n$ must as well. Thus, the quotient of $\Kc_n$ by $\Out(W_n)$ consists of one simplex for each unlabeled isomorphism class in $\HTc_n$, glued along common edges.   

$\Out^0(W_n)$ acts transitively on the labels of $\Kc_n$ but doesn't change the hypertree. Thus, the quotient of $\Kc_n$ by $\Out^0(W_n)$ is the full hypertree complex $\HTc_n$. 

As noted in Definition \ref{def:hypclass}, the unlabeled isomorphism classes in $\HTc_4$ are precisely $\left\{\Theta_4^0\right\}, \mathcal{S}_4,\ \mathcal{L}_4,\ \mathcal{M}_4^1$ \cite{Piggott}. When we are only concerned with $n=4$, we will drop the subscripts and use a more descriptive notation.

\begin{notation}\label{not:ht4}
We will denote the hypertrees in $\HTc_4$ as follows:
\begin{enumerate}
\item The hypertree with one hyperedge will be denoted $\Theta^0$.
\item The star tree in $\mathcal{S}_4$ with central vertex $i$ (generally called $\mathrm{S}_4^i$) will be denoted $S^i$. 
\item The line tree in $\mathcal{L}_4$ with hyperedges $\{j,i\},\ \{i,k\},\ \{k,l\}$ will be denoted $L^{i,j}_{k,l}$.
\item The hypertree in $\mathcal{M}_4^1$ which contains the hyperedges $\{i,j\},\ [n]\setminus \{j\}$ will be denoted $\Omega^{i,j}$. 
\end{enumerate}
\end{notation}

\begin{remark}
The following describes the poset structure on the 29 elements of $\HTc_4$ as well as the carrying relation. See also  Figure \ref{fig:htc4}. (Note that each listed partial conjugation might need to replace its domain with its complement to pick the representative not containing the minimal index.) 
\begin{enumerate}
\item $\Theta^0$ is a $\leq$-minimal element that only carries the identity. 
\item $\Omega^{i,j}$ carries only the identity and $x_{i,\{j\}}$. It folds into $\Theta^0$. 
\item $S^i$ carries the Klein 4-group of $\left\{\textrm{id}, x_{i,\{j\}},\ x_{i,\{k\}},\ x_{i,\{j,k\}}\right\}$, where $j$ and $k$ are the non-minimal elements of $[4]\setminus \{i\}$ (and $l$ is the minimal one). It folds into $\Omega^{i,j}$, $\Omega^{i,k}$, $\Omega^{i,l}$, and $\Theta^0$.  
\item $L^{i,j}_{k,l}$ carries the Klein 4-group of  $\left\{\textrm{id}, x_{i,\{j\}},\ x_{k,\{l\}},\ x_{i,\{j\}} x_{k,\{l\}}\right\}$. It folds to $\Omega^{i,j}$, $\Omega^{k,l}$, and $\Theta^0$. 
\end{enumerate}  
\end{remark}

Examining the maximal chains in $\HTc_4$, we see that every simplex in $\HT_4$ has a vertex $\Theta^0$, a vertex of the form $\Omega^{i,j}$, and a vertex of the form either $L^{i,j}_{k,l}$ or $S^{i}$. See Figure \ref{fig:ht4}. Thus, every simplex in $\K_4$ has a vertex $[\alpha, \Theta^0]$, a vertex of the form $[\alpha, \Omega^{i,j}]$, and a vertex of the form either $[\alpha, L^{i,j}_{k,l}]$ or $[\alpha, S^{i}]$ for some $\alpha \in \Out^0(W_4)$. Since the action of $\Out(W_4)$ is transitive on these classes, a fundamental domain for the $\Out(W_4)$ action on $\K_4$ is given by the union of the simplices spanned by $\left\{ \left[\textrm{id},\Theta^0\right],\ \left[\textrm{id},\Omega^{1,3}\right],\ \left[\textrm{id},L^{1,3}_{2,4}\right] \right\}$ (called an L-simplex) and $\left\{ \left[\textrm{id},\Theta^0\right],\ \left[\textrm{id},\Omega^{1,3}\right],\ \left[\textrm{id},S^{1}\right] \right\}$ (called an S-simplex). See Figure \ref{fig:fundom}. 

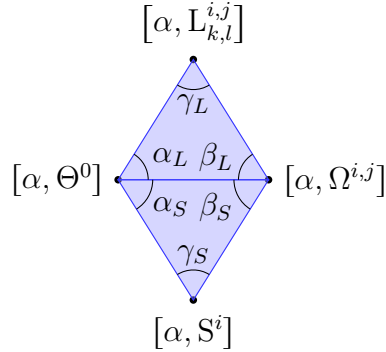
\begin{figure}
\begin{center}
\begin{tikzpicture}[scale=2]

\node (T) [circle,fill,inner sep=1pt,label=180:{$\left[\alpha, \Theta^0\right]$}] at (0, 0) {};
\node (L) [circle,fill,inner sep=1pt,label=90:{$\left[\alpha,\mathrm{L}^{i,j}_{k,l}\right]$}] at (0.5, 0.8) {};
\node (O) [circle,fill,inner sep=1pt,label=0:{$\left[\alpha,\Omega^{i,j}\right]$}] at (1, 0) {};
\node (S) [circle,fill,inner sep=1pt,label=270:{$\left[\alpha,\mathrm{S}^{i}\right]$}] at (0.5, -0.8) {};

\draw [fill,opacity = 0.4, color=blue!40!white] ($(T)$) -- ($(O)$) -- ($(L)$) -- ($(T)$) -- cycle;
\draw [color=blue!80!white] ($(T)$) -- ($(O)$) -- ($(L)$) -- ($(T)$) -- cycle;

\draw [fill,opacity = 0.4, color=blue!40!white] ($(T)$) -- ($(O)$) -- ($(S)$) -- ($(T)$) -- cycle;
\draw [color=blue!80!white] ($(T)$) -- ($(O)$) -- ($(S)$) -- ($(T)$) -- cycle;

\draw ($(T)+(0.2,0)$) arc (0:58:0.2);
\node at ($(T)+(0.35,0.15)$) {$\alpha_L$};
\draw ($(O)+(-0.2,0)$) arc (180:122:0.2);
\node at ($(O)+(-0.35,0.15)$) {$\beta_L$};
\draw ($(L)+(-0.106,-0.1696)$) arc (-122:-58:0.2);
\node at ($(L)+(0,-0.3)$) {$\gamma_L$};
\draw ($(T)+(0.23,0)$) arc (0:-58:0.23);
\node at ($(T)+(0.35,-0.18)$) {$\alpha_S$};
\draw ($(O)+(-0.23,0)$) arc (180:238:0.23);
\node at ($(O)+(-0.35,-0.18)$) {$\beta_S$};
\draw ($(S)+(-0.106,0.1696)$) arc (122:58:0.2);
\node at ($(S)+(0,0.3)$) {$\gamma_S$};

\end{tikzpicture}
\end{center}
\caption[The Fundamental Domain for the $\Out(W_4)$ Action on $\K_4$]{The fundamental domain for the $\Out(W_4)$ action on $\K_4$, with associated angles after metrizing.}
\label{fig:fundom}
\end{figure}

This description of $\K_4$ and the action of $\Out(W_n)$ will be useful in Section \ref{sec:metmm4}.

\section{Algebra of Out($W_n$)}
A presentation for $\Aut^0(W_\Gamma)$ (what M{\"u}hlherr calls $\Spe(W)$) is given in \cite{Muhlherr} as a semidirect product $\Inn(W_\Gamma) \rtimes \Out^0(W_\Gamma)$ and so a finite presentation can be extracted for $\Out^0(W_\Gamma)$ (after a few elementary Tietze transformations). 

Recall that a generating set for $\Out^0(W_n)$ is given by the set of partial conjugations $\mathcal{P}^0 = \{x_{i,D}\}$, where $i \in [n]$, $j$ is the minimal index in $[n] \setminus \{i\}$,  and $D$ is a non-empty subset of $[n] \setminus \{i,j\}$ (\cite{GPR}) . Also remember that $\widetilde{D} = D \cup \{i\}$ and $\widetilde{D^c} = D^c \cup \{i\}$. There are also some obvious classes of relations in $\Out^0(W_n)$: 

\begin{enumerate}
\item (R1) $x_{i,D} x_{i,D} = \textrm{id}$
\item (R2) $x_{i,D} x_{i,D'} = x_{i,(D \cup D') \setminus (D \cap D')}$
\item (R3) $[x_{i,D_i}, x_{j,D_j}] = 1$ if $\widetilde{D_i} \cap \widetilde{D_j} = \varnothing$, $\widetilde{D_i^c} \cap \widetilde{D_j} = \varnothing$, or $\widetilde{D_i} \cap \widetilde{D_j^c} = \varnothing$. (See Lemma \ref{lem:compc}). 
\end{enumerate}

Some elementary Tietze transformations on the main Theorem in \cite{Muhlherr} give the following. 

\begin{theorem}[M{\"u}hlherr \cite{Muhlherr}]
\label{thm:pres}
 A finite presentation for $\Out^0(W_n)$ is given by the generators $\mathcal{P}^0$ and the set of relations given by the union of the classes (R1), (R2), and (R3). 
\end{theorem}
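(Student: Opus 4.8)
The plan is to import M{\"u}hlherr's presentation of $\Aut^0(W_n)$ (his $\Spe(W_n)$) and massage it, through a short chain of elementary Tietze transformations, into a presentation of the quotient $\Out^0(W_n) = \Aut^0(W_n)/\Inn(W_n)$ carried by the generating set $\mathcal{P}^0$ with relator classes (R1)--(R3). So in effect the ``proof'' is the explicit bookkeeping of those Tietze moves.

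First I would restate M{\"u}hlherr's presentation in a form adapted to the universal case. For the empty defining graph $\St(a_i) = \{a_i\}$, so $\Gamma \setminus \St(a_i)$ is the empty graph on $[n]\setminus\{i\}$ and \emph{every} subset $D \subseteq [n]\setminus\{i\}$ is a legal domain; hence the partial conjugations are exactly the $x_{i,D}$ with $D \subseteq [n]\setminus\{i\}$, and the elementary ones are the $x_{i,\{k\}}$. Passing between these two generating sets is itself the Tietze move $x_{i,D} = \prod_{k\in D} x_{i,\{k\}}$, justified by (R2). M{\"u}hlherr's presentation is then (Tietze-)equivalent to the one on generators $\{x_{i,D}\}$ with relations: the involution relations $x_{i,D}^2 = 1$; the merging relations $x_{i,D}\, x_{i,D'} = x_{i,(D\cup D')\setminus(D\cap D')}$ for a fixed acting letter; and the commutation relations $[x_{i,D},x_{j,D'}] = 1$ for $i\neq j$ in the ``nested or disjoint'' domain configurations. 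Checking that M{\"u}hlherr's literal relation list is Tietze-equivalent to this one is routine but has to be done with care.

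Next I would pass to the quotient by inner automorphisms. Conjugation of $W_n$ by the generator $a_i$ is exactly the total conjugation $x_{i,[n]\setminus\{i\}}$, and $\Inn(W_n)\cong W_n$ is the subgroup generated by these $n$ elements, normal in $\Aut^0(W_n)$ with quotient $\Out^0(W_n)$ by the Gutierrez--Piggott--Ruane splitting. A presentation of $\Out^0(W_n)$ is therefore obtained by adjoining the relations $x_{i,[n]\setminus\{i\}} = 1$, $i \in [n]$; together with the merging relation these give, for all $i,D$, $x_{i,D} = x_{i,D}\, x_{i,[n]\setminus\{i\}} = x_{i,([n]\setminus\{i\})\setminus D} = x_{i,D^c}$, so the new relations are equivalent to the identifications $x_{i,D} = x_{i,D^c}$ (hence $x_{i,\emptyset} = 1$, and (R1) becomes a consequence of (R2)); these are exactly the representative choices of the Remark after the $\Out(W_n)$-decomposition corollary. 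Using $x_{i,D} = x_{i,D^c}$ I would then pick, for each nonempty proper domain, the representative omitting the minimal index of $[n]\setminus\{i\}$ and delete the trivial generators, arriving at $\mathcal{P}^0$. Rewriting the surviving relations in these representatives (reading $x_{i,E}$ as its canonical $\mathcal{P}^0$-representative, $=1$ when $E\in\{\emptyset,[n]\setminus\{i\}\}$), the involution and merging relations become (R1) and (R2) and the commutation relations become precisely (R3) --- the last point being exactly the content of Lemma \ref{lem:compc}, whose cases (2)--(4) are the three configurations in (R3) and whose case (1), $i_1 = i_2$, is absorbed into (R2).

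The hard part will be this last rewriting: verifying that M{\"u}hlherr's commutation relations, once their domains are complemented to land in $\mathcal{P}^0$, describe \emph{exactly} the class (R3), neither omitting a relation nor producing a spurious one. Complementing a domain can switch which of the cases (2)--(4) of Lemma \ref{lem:compc} applies, so the representative choice has to be tracked carefully; equivalently, one must check that ``$x_{i,D}$ commutes with $x_{j,D'}$'' is stable under the identifications $x_{i,D} = x_{i,D^c}$, which is precisely what makes (R3) a well-posed set of relations among the $\mathcal{P}^0$ generators. Everything else is formal. (A longer, more self-contained alternative would avoid M{\"u}hlherr entirely and read a presentation off the cocompact $\Out^0(W_n)$-action on the contractible complex $\K_n$ by the standard ``generators from edge-stabilizers, relators from $2$-cells'' machinery: the vertex groups $\mathbb{Z}_2^h$ of pairwise-commuting partial conjugations from Remark \ref{rem:carry} supply the generators, and the squares and commutations above reappear as the $2$-cell relators.)
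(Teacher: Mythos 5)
Your proposal matches the paper's approach: the paper offers no explicit argument for Theorem~\ref{thm:pres} beyond the single sentence that it follows from M{\"u}hlherr's presentation of $\Aut^0(W_\Gamma) \cong \Inn(W_\Gamma)\rtimes\Out^0(W_\Gamma)$ ``after a few elementary Tietze transformations,'' and your proposal is precisely a fleshed-out account of those Tietze moves (kill the total conjugations $x_{i,[n]\setminus\{i\}}$, use the merge relation to get $x_{i,D}=x_{i,D^c}$, select the $\mathcal{P}^0$ representatives, and identify the surviving commutation relations with (R3) via Lemma~\ref{lem:compc}). The one small cosmetic difference is that the paper's phrasing suggests reading $\Out^0$ off as the retract factor of the semidirect product rather than as a quotient, but for a split extension these give the same presentation and the same Tietze chain; you also flag the real delicacy, namely that the identification $x_{i,D}=x_{i,D^c}$ permutes the cases (2)--(4) of Lemma~\ref{lem:compc}, which is exactly the bookkeeping the paper elides.
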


\begin{remark}
\label{rem:presout3}
Following the presentation of $\Out^0(W_3)$ from Theorem \ref{thm:pres} (and using generators $y_{i,D}$ instead of $x_{i,D}$ to distinguish the $n=3$ and $n=4$ cases), we find that $\mathcal{P}^0 = \left\{y_{1,\{3\}}, y_{2,\{3\}}, y_{3,\{2\}}\right\}$. For each of these automorphisms, $\widetilde{D}$ and $\widetilde{D^c}$ contain at least two indices each, but since there are only three indices total in $[3]$, these extended domains can never be disjoint. Thus, there are no relations of the form (R3). Also, there are no partial conjugations in $\mathcal{P}^0$ that have the same acting letter, and so there are no relations of the form (R2) either. Thus, the full presentation for $\Out^0(W_3)$ is given by:
\[
\Out^0(W_3) = \left\langle y_{1,\{3\}}, y_{2,\{3\}}, y_{3,\{2\}} \mid y_{1,\{3\}}^2 = y_{2,\{3\}}^2 = y_{3,\{2\}}^2 = \textrm{id} \right\rangle \cong W_3.
\]
Thus, $\Out^0(W_3) \cong W_3$ and so is a right-angled Coxeter group. However, the higher rank $\Out^0(W_n)$ are \emph{not} themselves right-angled Coxeter groups \cite{mythesis}. 
\end{remark}

\section{Metrizing McCullough-Miller 4-Space}\label{sec:metmm4}

In this section, we show that $\K_n$ admits no $G$-equivariant $\cat(\kappa)$ $M_{\kappa}$-polyhedral structure for $G \cong \Out(W_n)$ or $G \cong \Out^0(W_n)$, $n \geq 4$, and $\kappa \leq 0$.

This is analogous to a result in Bridson's thesis \cite{Bridson} for $\Out(F_n)$ (for $n \geq 3$). 

We shall need the following foundational theorem on curvature in polyhedral complexes. Gromov stated it without proof in \cite{Gromov}, and Bridson proved it in full generality in \cite{Bridson}. 

\begin{theorem}[Gromov's Link Condition \cite{Gromov,Bridson,BH}]\label{thm:link}
For $\kappa \leq 0$, a 2-dimensional $M_{\kappa}$-complex with finitely many isometry classes of polyhedrons is $\cat(\kappa)$ if and only if it is simply connected and the link of each vertex is globally $\cat(1)$ if and only if it is simply connected and for each vertex $v$, every injective loop in the link of $v$, $\Lk(v)$, has length at least $2 \pi$. 
\end{theorem}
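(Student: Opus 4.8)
The plan is to obtain this from the standard local‑to‑global machinery for non‑positively curved spaces, the only paper‑specific work being the verification that the polyhedral hypotheses make that machinery applicable. First I would record the two input facts one may take off the shelf: (a) \emph{Berestovskii's theorem}, that for $\kappa\le 0$ the $\kappa$‑cone $C_\kappa(Y)$ over a metric space $Y$ is $\cat(\kappa)$ if and only if $Y$ is $\cat(1)$; and (b) the \emph{Cartan--Hadamard theorem for $\kappa\le 0$}, that a complete, connected, simply connected, locally $\cat(\kappa)$ geodesic space is globally $\cat(\kappa)$. The hypothesis $\kappa\le 0$ enters only through (b) (for $\kappa>0$ one would have to bound the size of $X$), which is exactly why the theorem is stated for $\kappa\le 0$. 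I would also use Bridson's structural result \cite{Bridson} that an $M_\kappa$‑complex with finitely many isometry classes of cells is a complete geodesic space; this is the one place the finiteness hypothesis is consumed.

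Next I would handle the link conditions. For any point $x$ of a $2$‑dimensional $M_\kappa$‑complex $X$, a sufficiently small ball about $x$ is isometric to a ball about the apex of $C_\kappa(\Lk(x))$, so by (a) $X$ is locally $\cat(\kappa)$ at $x$ precisely when $\Lk(x)$ is $\cat(1)$. I would then observe that only \emph{vertex} links can fail this: if $x$ is interior to a $2$‑cell then $\Lk(x)$ is a circle of circumference $2\pi$; if $x$ is interior to an edge $e$, then $\Lk(x)$ is the metric graph on two vertices (the directions along $e$) with one arc of length $\pi$ for each $2$‑cell containing $e$, and every embedded cycle there has length exactly $2\pi$; in both cases $\Lk(x)$ is $\cat(1)$. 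For a vertex $v$, $\Lk(v)$ is a $1$‑dimensional $M_1$‑complex, i.e.\ a metric graph with all edges of length $\le\pi$, and for such graphs $\cat(1)$ is equivalent to containing no isometrically embedded circle of length $<2\pi$, i.e.\ to every injective locally geodesic loop having length $\ge 2\pi$. One direction is immediate: a short embedded closed geodesic $\gamma$ would yield two distinct geodesics between two points of $\gamma$ at distance $<\pi$, impossible in a $\cat(1)$ space. The converse is the standard development/flattening argument for metric graphs — a geodesic triangle of perimeter $<2\pi$ is either degenerate, where the comparison inequality is trivial, or else traces out an embedded short cycle, contradicting the hypothesis — which I would carry out or cite from \cite{BH}. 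This is exactly the equivalence of the second and third descriptions in the statement.

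Finally I would assemble the pieces. If $X$ is $\cat(\kappa)$ with $\kappa\le 0$, then $X$ is contractible, hence simply connected, and is locally $\cat(\kappa)$, so every $\Lk(x)$ — in particular every $\Lk(v)$ — is $\cat(1)$; conversely, if $X$ is simply connected and every vertex link is $\cat(1)$, then by the reduction above every point‑link is $\cat(1)$, so by (a) $X$ is locally $\cat(\kappa)$, and then Bridson's completeness together with the Cartan--Hadamard theorem (b) promote this to $X$ being globally $\cat(\kappa)$. The steps I expect to require the most care — and what \cite{Bridson} supplies beyond Gromov's original assertion — are the rigorous polyhedral versions of ``small balls are isometric to cones over links'' and of ``finitely many cell‑shapes implies a complete geodesic space,'' along with the development lemma for metric graphs used above; granting those, the remainder of the argument is bookkeeping.
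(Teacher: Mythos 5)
The paper does not prove Theorem~\ref{thm:link}; it is quoted as a black box from \cite{Gromov,Bridson,BH}, so there is no in-paper argument to compare against. Your outline is a faithful reconstruction of the standard proof as it appears in \cite{BH} and Bridson's thesis: you correctly route the link condition through Berestovskii's cone theorem to get local $\cat(\kappa)$, correctly identify the finiteness-of-shapes hypothesis as what yields a complete geodesic space, correctly identify the Cartan--Hadamard theorem as the consumer of both simple connectivity and the restriction $\kappa \le 0$, and correctly reduce to vertex links and then to the metric-graph characterization of $\cat(1)$ to obtain the third formulation. I see no gaps.
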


For 2-dimensional complexes, this condition reduces to the following. 

\begin{theorem}[Gromov, Bridson \cite{Gromov, Bridson,BH}]\label{thm:brid}
If $X$ is a 2-dimensional $\cat(\kappa)$ simplicial $M_{\kappa}$-complex for $\kappa \leq 0$, and $\alpha_i \in (0,\pi]$ are the angles at each corner of a simplex in the complex, then $\Sigma_T \alpha_i \leq \pi$, where $T$ are the interior angles of a simplex, and $\Sigma_{\gamma} \alpha_i \geq 2 \pi$, where $\gamma$ are the angles around an injective loop in a link of a vertex. 
\end{theorem}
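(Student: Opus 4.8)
\textbf{Proof strategy for Theorem~\ref{thm:brid} (the 2-dimensional angle inequalities).}
The plan is to \emph{deduce} both inequalities from Theorem~\ref{thm:link}, Gromov's Link Condition, which has already been stated and may be freely invoked. The link condition tells us that an $M_\kappa$-complex $X$ of dimension $2$ is $\cat(\kappa)$ if and only if it is simply connected and every injective loop in the link of every vertex has length at least $2\pi$. So the entire task reduces to (i) understanding precisely what ``link of a vertex'' and ``length of a loop in the link'' mean for a $2$-dimensional $M_\kappa$-simplicial complex, and (ii) reading off the two stated numerical consequences.

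First I would recall the combinatorial description of links in a $2$-complex. If $v$ is a vertex of $X$, then $\Lk(v)$ is a graph (a $1$-dimensional simplicial complex): its vertices correspond to the edges of $X$ emanating from $v$, and its edges correspond to the $2$-simplices of $X$ containing $v$. Crucially, $\Lk(v)$ carries a natural \emph{length metric}: the edge of $\Lk(v)$ coming from a $2$-simplex $T$ containing $v$ is assigned length equal to the \emph{angle} of $T$ at the corner $v$ — that is, the vertex angle of the corresponding $M_\kappa$-triangle at $v$ (which lies in $(0,\pi]$ by the hypothesis on the $\alpha_i$, and is well-defined because there are only finitely many isometry types of simplices). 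With this metric, ``the length of an injective loop $\gamma$ in $\Lk(v)$'' is exactly $\sum_\gamma \alpha_i$, the sum of the corner angles at $v$ of the $2$-simplices traversed by $\gamma$. Gromov's Link Condition then says: $X$ is $\cat(\kappa)$ (for $\kappa\le 0$, $X$ simply connected) $\iff$ $\Lk(v)$ is $\cat(1)$ for every $v$ $\iff$ every injective loop in every $\Lk(v)$ has length $\ge 2\pi$. This last equivalence, for a metric graph, is standard: a metric graph is $\cat(1)$ precisely when it has no closed geodesic of length $<2\pi$, equivalently no injective (embedded) cycle of length $<2\pi$, since in a graph every locally geodesic loop is supported on an embedded cycle. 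So assuming $X$ is $\cat(\kappa)$, we immediately get $\sum_\gamma \alpha_i \ge 2\pi$ for every injective loop $\gamma$ in a link, which is the second assertion.

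For the first assertion — that $\sum_T \alpha_i \le \pi$ for the three interior corner angles of a single $2$-simplex — I would argue directly from the geometry of $M_\kappa$-triangles rather than from the link condition. Fix a $2$-simplex $T$ of $X$; by definition of an $M_\kappa$-polyhedral complex, $T$ is isometric to a geodesic triangle in the model space $M_\kappa$ (the Euclidean plane when $\kappa=0$, a rescaled hyperbolic plane when $\kappa<0$), and its three corner angles are the $\alpha_i$. The classical Gauss–Bonnet comparison for $M_\kappa$, $\kappa \le 0$, gives that the angle sum of any geodesic triangle in $M_\kappa$ is at most $\pi$ (exactly $\pi$ in the Euclidean case, strictly less in the hyperbolic case, the deficit being $-\kappa$ times the area). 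Hence $\sum_T \alpha_i \le \pi$. Note this inequality needs no $\cat(\kappa)$ hypothesis on $X$ at all — it is intrinsic to each simplex — whereas the link inequality genuinely uses the global curvature bound via Theorem~\ref{thm:link}.

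\textbf{Main obstacle.} The one point that needs care, and where I expect the bulk of the work to lie, is the identification ``length of an injective loop in $\Lk(v)$ $=\sum_\gamma\alpha_i$'' together with the equivalence ``$\Lk(v)$ is $\cat(1)$ $\iff$ every injective loop has length $\ge 2\pi$''. The first part is a bookkeeping matter — one must check that the piecewise-spherical (here just piecewise-interval, since links of vertices in a $2$-complex are graphs) metric on $\Lk(v)$ prescribed in the definition of an $M_\kappa$-complex assigns to each edge exactly the relevant corner angle, and that consecutive edges of an injective loop correspond to consecutive $2$-simplices around $v$ so that their angles add. The second part — that a metric graph is $\cat(1)$ exactly when its \emph{systole} (shortest embedded cycle) is $\ge 2\pi$ — is a known fact about $1$-dimensional complexes: it follows because a graph contains no nonconstant geodesic bigon, so the only obstruction to the $\cat(1)$ inequality is a short closed geodesic, and in a graph a shortest closed geodesic is realized by an embedded cycle. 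I would state this as a lemma (or cite \cite{BH}, where it is the analysis of the link condition in low dimensions) and then the theorem follows by combining it with Theorem~\ref{thm:link} for the loop inequality and Gauss–Bonnet for the simplex inequality.
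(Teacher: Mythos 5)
Your argument is correct and matches the paper's intent: the paper states Theorem~\ref{thm:brid} as a cited specialization of Theorem~\ref{thm:link} to dimension $2$ without giving a proof, and your derivation — the loop inequality from the link condition via the angular metric on $\Lk(v)$ (where the injective-loop formulation is already built into the statement of Theorem~\ref{thm:link}, so your lemma on $\cat(1)$ metric graphs is careful but not strictly needed), and the simplex inequality from the angle-sum comparison for geodesic triangles in $M_\kappa$ with $\kappa \le 0$ — is exactly the standard argument in \cite{BH}. No gaps.
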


In particular, if the system of inequalities in the $\alpha_i$ given by Theorem \ref{thm:brid} is unsatisfiable, then $X$ admits no $M_{\kappa}$-polyhedral structure of non-positive curvature.

\subsection{The Out($W_4$) Case}
\label{subsec:out}

We would now like to use Theorem \ref{thm:brid} to show that no appropriate $\cat(\kappa)$ metric can be assigned to $\K_4$. So suppose that $\K_4$ \emph{has} been given an $\Out(W_4)$-equivariant metric that makes $\K_4$ a $\cat(\kappa)$ $M_{\kappa}$-simplicial complex. 

\begin{definition}
\label{def:ang}
Since the metric is $\Out(W_4)$-equivariant, it suffices to assign an angle to each corner of each simplex in the fundamental domain of the action in order to specify an angle in every corner of every simplex of $\K_4$. So let the angles be defined as follows:
\begin{enumerate}
\item In any L-simplex, let $\alpha_L$ be the vertex angle of $\left[\alpha,\Theta^0 \right]$, let $\beta_L$ be the vertex angle of $\left[\alpha,\Omega^{i,j}\right]$, and let $\gamma_L$ be the vertex angle of $\left[\alpha,L^{i,j}_{k,l}\right]$.
\item In any S-simplex, let $\alpha_S$ be the vertex angle of $\left[\alpha,\Theta^0\right]$, let $\beta_S$ be the vertex angle of $\left[\alpha,\Omega^{i,j}\right]$, and let $\gamma_S$ be the vertex angle of $\left[\alpha,S^{i}\right]$.
\end{enumerate}
\end{definition}

By Theorem \ref{thm:brid}, we know that the angles must satisfy the following inequalities:
\begin{align}\label{eq:TL}
\alpha_L + \beta_L + \gamma_L \leq \pi
\end{align}
\begin{align}\label{eq:TS}
\alpha_S + \beta_S + \gamma_S \leq \pi
\end{align}

To determine the other inequalities, we need to understand what the links of the vertices in $\K_4$ look like. It suffices to consider the links of the vertices in a fundamental domain. 

\begin{example}
We start with the link of $\left[\textrm{id},\Omega^{1,3}\right]$.

In $\K_4$, $\left[\textrm{id},\Omega^{1,3}\right]$ is adjacent to $\left[\alpha, \Lambda \right]$ whenever either $\Omega^{1,3} \leq \Lambda$ and $\textrm{id}^{-1} \alpha = \alpha$ is carried by $\Lambda$, or else $\Theta \leq \Omega^{1,3}$ and $\textrm{id}^{-1} \alpha = \alpha$ is carried by $\Omega^{1,3}$. In the former case, since $\left[\textrm{id},\Lambda\right] = \left[\alpha,\Lambda\right]$ for any $\alpha$ carried by $\Lambda$, it suffices to consider the representatives $\left[\textrm{id},\Lambda\right]$. In the latter case, $\alpha$ might not be carried by $\Theta$, so the different $\left[\alpha, \Theta\right]$ will result in different vertices. 

Since $\Omega^{1,3}$ carries the identity and $x_{1,\{3\}}$, $\left[\textrm{id},\Omega^{1,3}\right] = \left[x_{1,\{3\}},\Omega^{1,3}\right]$, and so $\left[\textrm{id},\Omega^{1,3}\right]$ is adjacent to $\left[\textrm{id},\Theta^0\right]$ and $\left[x_{1,\{3\}},\Theta^0\right]$, which are different vertices. 

On the other hand, the hypertrees greater than $\Omega^{1,3}$ in $\HTc_4$ are the ones that it can unfold into, namely, $L^{1,3}_{2,4}$, $L^{1,3}_{4,2}$, and $S^{1}$. So $\left[\textrm{id},\Omega^{1,3}\right]$ is also adjacent to $\left[\textrm{id},L^{1,3}_{2,4}\right]$, $\left[\textrm{id},L^{1,3}_{4,2}\right]$, and $\left[\textrm{id},S^{1}\right]$. These 5 vertices are the only ones adjacent to $\Omega^{1,3}$ in $\K_4$. 

In the link, vertices are connected by an edge if they share a simplex in $\K_4$ and the length of that edge is given by the angle with vertex $\left[\textrm{id},\Omega^{1,3}\right]$ in that simplex. So the line trees and star tree are never connected to each other, but the nuclear vertex is connected to each whenever the label matches up. The link is shown in Figure \ref{fig:lkomeg}. 

Reading off the injective loops that go around the large square as well as one of the smaller squares, we use Theorem \ref{thm:brid} to get the inequalities:

\begin{align}
\label{eq:bl}
\beta_L + \beta_L + \beta_L + \beta_L &\geq 2 \pi \\ 
\textrm{i.e., }\beta_L &\geq \frac{\pi}{2} \nonumber
\end{align}

\begin{align}
\label{eq:blbs}
\beta_L + \beta_L + \beta_S + \beta_S &\geq 2 \pi
\\ 
\textrm{i.e., }\beta_L + \beta_S &\geq \pi \nonumber
\end{align}

\end{example}

\begin{figure}

\hspace*{-2cm}
\begin{tikzpicture}[scale=1.5]
\centering

\node (S1) [star,fill,inner sep=2pt,label=0:{$\left[\mathrm{id},\mathrm{S}^{1}\right]$}] at (0, 0) {};
\node (S1L) at ($(S1)+(0.75,-0.4)$) {$= \left[x_{1,\{3\}},\mathrm{S}^{1}\right]$};
\node (1324) [circle,fill,inner sep=1.5pt,label=0:{$\left[\mathrm{id},\mathrm{L}^{1,3}_{2,4}\right] = \left[x_{1,\{3\}},\mathrm{L}^{1,2}_{3,4}\right]$}] at (2, 0) {};
\node (1342) [circle,fill,inner sep=1.5pt,label=180:{$\left[\mathrm{id},\mathrm{L}^{1,3}_{4,2}\right] = \left[x_{1,\{3\}},\mathrm{L}^{1,3}_{4,2}\right]$}] at (-2, 0) {};
\node (T1) [circle,fill,inner sep=1.5pt,label=-90:{$\left[\mathrm{id},\Theta^0\right]$}] at (0, -2) {};
\node (T2) [circle,fill,inner sep=1.5pt,label=90:{$\left[x_{1,\{3\}},\Theta^0\right]$}] at (0, 2) {};

\draw [color=black] ($(T1)$) -- ($(S1)$);
\draw [color=black] ($(T1)$) -- ($(1324)$);
\draw [color=black] ($(T1)$) -- ($(1342)$);
\draw [color=black] ($(T2)$) -- ($(S1)$);
\draw [color=black] ($(T2)$) -- ($(1324)$);
\draw [color=black] ($(T2)$) -- ($(1342)$);

\node [diamond,fill,color=ForestGreen,inner sep=2.4pt] at ($(1324)$) {};
\node [diamond,fill,color=ForestGreen,inner sep=2.4pt] at ($(1342)$) {};

\node [circle,fill,color=Plum,inner sep=2.6pt] at ($(T1)$) {};
\node [circle,fill,color=Plum,inner sep=2.6pt] at ($(T2)$) {};

\node [star,fill,color=blue!80!white,inner sep=3pt] at ($(S1)$) {};

\end{tikzpicture}

\caption[The Link of $\Omega^{1,3}$]{The link of $\left[\textrm{id},\Omega^{1,3}\right] = \left[x_{1,\{3\}},\Omega^{1,3}\right]$ in $\K_4$. The blue stars are star trees, the green diamonds are line trees, and the purple circles are nuclear vertices.}
\label{fig:lkomeg}
\end{figure}
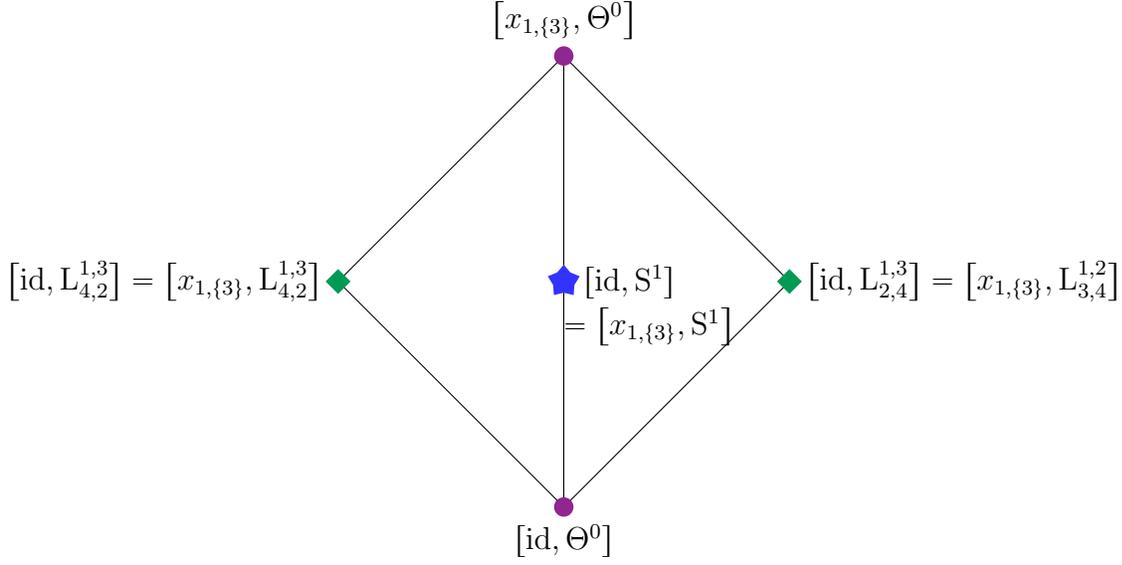

\begin{example}
Next, we examine the link of $\left[\textrm{id},L^{1,3}_{2,4}\right]$.

Since $L^{1,3}_{2,4}$ carries $\left\{\textrm{id},\  x_{1,\{3\}},\  x_{2,\{4\}},\  x_{1,\{3\}} x_{2,\{4\}}\right\}$, $\left[\textrm{id},L^{1,3}_{2,4}\right] = \left[x_{1,\{3\}},L^{1,3}_{2,4}\right] = \left[x_{2,\{4\}},L^{1,3}_{2,4}\right] = \left[x_{1,\{3\}} x_{2,\{4\}},L^{1,3}_{2,4}\right]$, and so $\left[\textrm{id},L^{1,3}_{2,4}\right]$ is adjacent to $\left[\textrm{id},\Theta^0\right]$, $\left[x_{1,\{3\}},\Theta^0\right]$, $\left[x_{2,\{4\}},\Theta^0\right]$, and $\left[x_{1,\{3\}}x_{2,\{4\}},\Theta^0\right]$ which are different vertices. 

$\left[\textrm{id},L^{1,3}_{2,4}\right]$ is also adjacent to the vertices with these same four labels and with hypertree $\Omega^{1,3}$ or $\Omega^{2,4}$, but since each of these vertices has two representatives (e.g., $\left[\textrm{id},\Omega^{1,3}\right] = \left[x_{1,\{3\}},\Omega^{1,3}\right]$), this results in only four new adjacent vertices in $\K_4$. 

In total, there are 8 adjacent vertices. In the L-simplices in $\K_4$, the nuclear vertices are connected to both $\Omega^{i,j}$ vertices, and each of those vertices carry one non-identity automorphism, and so are connected to two nuclear vertices. Calculating all of these adjacencies, we see that the link graph is a single cycle of length 8, as shown in Figure \ref{fig:lkline}. 

Reading off the single injective loops in the cycle, we use Theorem \ref{thm:brid} to get the inequality:

\begin{align}
\label{eq:gl}
8 \gamma_L &\geq 2 \pi \\ 
\textrm{i.e., }\gamma_L &\geq \frac{\pi}{4} \nonumber
\end{align}

\end{example}

\begin{figure}
\hspace*{-2cm}
\begin{tikzpicture}[scale=1.5]
\centering

\node (13I) [circle,fill,inner sep=1.5pt,label=0:{$\left[\mathrm{id},\Omega^{1,3}\right] = \left[x_{1,\{3\}},\Omega^{1,3}\right]$}] at (1.848, 0.765) {};
\node (TI) [circle,fill,inner sep=1.5pt,label=45:{$\left[\mathrm{id},\Theta^0 \right]$}] at (0.765, 1.848) {};
\node (24I) [circle,fill,inner sep=1.5pt,label=135:{$\left[\mathrm{id},\Omega^{2,4}\right] = \left[x_{2,\{4\}},\Omega^{2,4}\right]$}] at (-0.765, 1.848) {};
\node (T24) [circle,fill,inner sep=1.5pt,label=180:{$\left[x_{2,\{4\}},\Theta^0\right]$}] at (-1.848, 0.765) {};
\node (1324) [circle,fill,inner sep=1.5pt,label=180:{$\left[x_{2,\{4\}},\Omega^{1,3}\right] = \left[x_{1,\{3\}}x_{2,\{4\}},\Omega^{1,3}\right]$}] at (-1.848, -0.765) {};
\node (T1324) [circle,fill,inner sep=1.5pt,label=225:{$\left[x_{1,\{3\}}x_{2,\{4\}},\Theta^0\right]$}] at (-0.765, -1.848) {};
\node (2413) [circle,fill,inner sep=1.5pt,label=-45:{$\left[x_{1,\{3\}}x_{2,\{4\}},\Omega^{2,4}\right] = \left[x_{1,\{3\}},\Omega^{2,4}\right]$}] at (0.765, -1.848) {};
\node (T13) [circle,fill,inner sep=1.5pt,label=0:{$\left[x_{1,\{3\}},\Theta^0\right]$}] at (1.848, -0.765) {};

\draw [color=black] ($(13I)$) -- ($(TI)$);
\draw [color=black] ($(24I)$) -- ($(TI)$);
\draw [color=black] ($(24I)$) -- ($(T24)$);
\draw [color=black] ($(1324)$) -- ($(T24)$);
\draw [color=black] ($(1324)$) -- ($(T1324)$);
\draw [color=black] ($(2413)$) -- ($(T1324)$);
\draw [color=black] ($(2413)$) -- ($(T13)$);
\draw [color=black] ($(13I)$) -- ($(T13)$);

\node [regular polygon,regular polygon sides=3,fill,color=Red,inner sep=2pt] at ($(13I)$) {};
\node [regular polygon,regular polygon sides=3,fill,color=Red,inner sep=2pt] at ($(24I)$) {};
\node [regular polygon,regular polygon sides=3,fill,color=Red,inner sep=2pt] at ($(1324)$) {};
\node [regular polygon,regular polygon sides=3,fill,color=Red,inner sep=2pt] at ($(2413)$) {};

\node [circle,fill,color=Plum,inner sep=2.6pt] at ($(TI)$) {};
\node [circle,fill,color=Plum,inner sep=2.6pt] at ($(T24)$) {};
\node [circle,fill,color=Plum,inner sep=2.6pt] at ($(T1324)$) {};
\node [circle,fill,color=Plum,inner sep=2.6pt] at ($(T13)$) {};

\end{tikzpicture}

\caption[The Link of $\mathrm{L}^{1,3}_{2,4}$]{The link of $\left[\textrm{id},\mathrm{L}^{1,3}_{2,4}\right] = \left[x_{1,\{3\}},\mathrm{L}^{1,3}_{2,4}\right]  = \left[x_{2,\{4\}},\mathrm{L}^{1,3}_{2,4}\right] = \left[x_{1,\{3\}}x_{2,\{4\}},\mathrm{L}^{1,3}_{2,4}\right]$ in $\K_4$. The purple circles are nuclear vertices, and the red triangles are elements of $\mathcal{M}_4^1$.}
\label{fig:lkline}
\end{figure}

\begin{example}
Now, we construct the link of $\left[\textrm{id},S^{1}\right]$.

Since $S^{1}$ carries $\left\{\textrm{id},\  x_{1,\{3\}},\  x_{1,\{4\}},\  x_{1,\{3\}} x_{1,\{4\}}\right\}$, $\left[\textrm{id},S^{1}\right] = \left[x_{1,\{3\}},S^{1}\right] = \left[x_{1,\{4\}},S^{1}\right] = \left[x_{1,\{3\}} x_{1,\{4\}},S^{1}\right]$, and so $\left[\textrm{id},S^{1}\right]$ is adjacent to $\left[\textrm{id},\Theta^0\right]$, $\left[x_{1,\{3\}},\Theta^0\right]$, $\left[x_{1,\{4\}},\Theta^0\right]$, and $\left[x_{1,\{3\}}x_{1,\{4\}},\Theta^0\right]$ which are different vertices. 

$\left[\textrm{id},S^{1}\right]$ is also adjacent to the vertices with these same four labels and with hypertree $\Omega^{1,2}$, $\Omega^{1,3}$, or  $\Omega^{1,4}$, but since each of these vertices has two representatives (e.g., $\left[\textrm{id},\Omega^{1,3}\right] = \left[x_{1,\{3\}},\Omega^{1,3}\right]$), this results in only six new adjacent vertices in $\K_4$. 

In total, there are 10 adjacent vertices. In the S-simplices in $\K_4$, the nuclear vertices are connected to all three $\Omega^{i,j}$ vertices, and each of those vertices carry one non-identity automorphism, and so are connected to two nuclear vertices. Calculating all of these adjacencies, we see that the link graph is three cycles of length 6, each glued to each other along paths of length 2, as shown in Figure \ref{fig:lkstar}. 

Since all of the edges in the link have length $\gamma_S$, finding the smallest injective loop will give us an inequality that will imply all of the others. So, reading off the smallest injective loop in the link, which is a cycle of length 6, we use Theorem \ref{thm:brid} to get the inequality:

\begin{align}
\label{eq:gs}
6 \gamma_S &\geq 2 \pi \\ 
\textrm{i.e., }\gamma_S &\geq \frac{\pi}{3} \nonumber
\end{align}

\end{example}

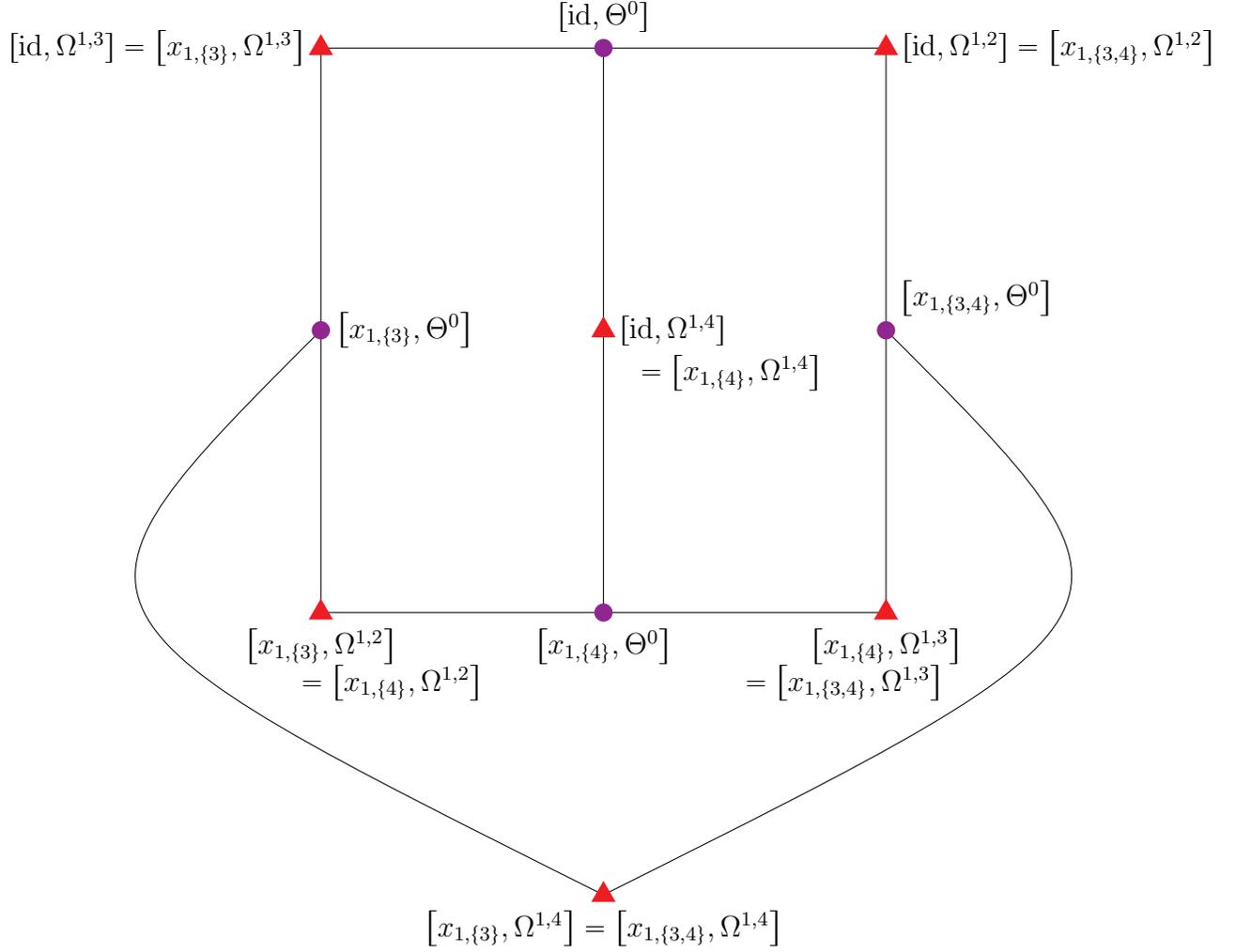
\begin{figure}
\hspace*{-2cm}
\begin{tikzpicture}[scale=2]
\centering

\node (14I) [circle,fill,inner sep=1.5pt,label=0:{$\left[\mathrm{id},\Omega^{1,4}\right]$}] at (0,0) {};

\node (14IL) at ($(14I)+(0.9,-0.3)$) {$= \left[x_{1,\{4\}},\Omega^{1,4}\right]$};

\node (TI) [circle,fill,inner sep=1.5pt,label=90:{$\left[\mathrm{id},\Theta^0 \right]$}] at (0, 2) {};
\node (13I) [circle,fill,inner sep=1.5pt,label=180:{$\left[\mathrm{id},\Omega^{1,3}\right] = \left[x_{1,\{3\}},\Omega^{1,3}\right]$}] at (-2,2) {};
\node (T13) [circle,fill,inner sep=1.5pt,label=0:{$\left[x_{1,\{3\}},\Theta^0 \right]$}] at (-2, 0) {};
\node (1213) [circle,fill,inner sep=1.5pt,label=-90:{$\left[x_{1,\{3\}},\Omega^{1,2}\right]$}] at (-2,-2) {};

\node (1213L) at ($(1213)+(0.5,-0.5)$) {$= \left[x_{1,\{4\}},\Omega^{1,2}\right]$};

\node (T14) [circle,fill,inner sep=1.5pt,label=-90:{$\left[x_{1,\{4\}},\Theta^0 \right]$}] at (0, -2) {};
\node (1314) [circle,fill,inner sep=1.5pt,label=-90:{$\left[x_{1,\{4\}},\Omega^{1,3}\right]$}] at (2,-2) {};

\node (1314L) at ($(1314)+(-0.3,-0.5)$) {$= \left[x_{1,\{3,4\}},\Omega^{1,3}\right]$};

\node (T134) [circle,fill,inner sep=1.5pt,label=45:{$\left[x_{1,\{3,4\}},\Theta^0 \right]$}] at (2, 0) {};
\node (12I) [circle,fill,inner sep=1.5pt,label=0:{$\left[\textrm{id},\Omega^{1,2}\right] = \left[x_{1,\{3,4\}},\Omega^{1,2}\right]$}] at (2,2) {};
\node (1413) [circle,fill,inner sep=1.5pt,label=-90:{$\left[x_{1,\{3\}},\Omega^{1,4}\right] = \left[x_{1,\{3,4\}},\Omega^{1,4}\right]$}] at (0,-4) {};

\draw [color=black] ($(14I)$) -- ($(TI)$);
\draw [color=black] ($(13I)$) -- ($(TI)$);
\draw [color=black] ($(13I)$) -- ($(T13)$);
\draw [color=black] ($(1213)$) -- ($(T13)$);
\draw [color=black] ($(1213)$) -- ($(T14)$);
\draw [color=black] ($(1314)$) -- ($(T14)$);
\draw [color=black] ($(1314)$) -- ($(T134)$);
\draw [color=black] ($(12I)$) -- ($(T134)$);
\draw [color=black] ($(12I)$) -- ($(TI)$);
\draw [color=black] ($(14I)$) -- ($(T14)$);

\draw [color=black] ($(T13)$) .. controls (-4,-2) .. ($(1413)$);
\draw [color=black] ($(T134)$) .. controls (4,-2) .. ($(1413)$);

\node [regular polygon,regular polygon sides=3,fill,color=Red,inner sep=2pt] at ($(14I)$) {};
\node [regular polygon,regular polygon sides=3,fill,color=Red,inner sep=2pt] at ($(13I)$) {};
\node [regular polygon,regular polygon sides=3,fill,color=Red,inner sep=2pt] at ($(1213)$) {};
\node [regular polygon,regular polygon sides=3,fill,color=Red,inner sep=2pt] at ($(1314)$) {};
\node [regular polygon,regular polygon sides=3,fill,color=Red,inner sep=2pt] at ($(12I)$) {};
\node [regular polygon,regular polygon sides=3,fill,color=Red,inner sep=2pt] at ($(1413)$) {};

\node [circle,fill,color=Plum,inner sep=2.6pt] at ($(TI)$) {};
\node [circle,fill,color=Plum,inner sep=2.6pt] at ($(T13)$) {};
\node [circle,fill,color=Plum,inner sep=2.6pt] at ($(T134)$) {};
\node [circle,fill,color=Plum,inner sep=2.6pt] at ($(T14)$) {};

\end{tikzpicture}

\caption[The Link of $\textrm{S}^{1}$]{The link of $\left[\textrm{id},\textrm{S}^{1}\right] = \left[x_{1,\{3\}},\textrm{S}^{1}\right]  = \left[x_{1,\{4\}},\textrm{S}^{1}\right] = \left[x_{1,\{3,4\}},\textrm{S}^{1}\right]$ in $\K_4$. It consists of three hexagons glued together. The purple circles are nuclear vertices, and the red triangles are elements of $\mathcal{M}_4^1$.}
\label{fig:lkstar}
\end{figure}

\begin{example}
Finally, we construct the link of $\left[\textrm{id},\Theta^0\right]$.

Since $\Theta^0$ only carries the identity but is in every simplex in $\HT_4$, $\left[\textrm{id},\Theta^0\right]$ is adjacent only to vertices with the same label but \emph{any} hypertree, i.e., the vertices $\left\{\left[\textrm{id},\Lambda\right]\mid \Lambda \in \HTc_4 \right\}$ in $\K_4$. So its link in $\K_4$ is identical to its link in $\HT_4$, which is given in Figure 5 in \cite{Piggott} and reproduced below in Figure \ref{fig:lknuc}. 

It has 4 star vertices, 12 omega vertices, and 12 line vertices, for a total of 28. The star vertices are each connected to three omega vertices, the line vertices are each connected to two omega vertices, and the omega vertices are each connected to one star and two line vertices. The link is made up of glued octagons, and we only need the smallest injective loops which wrap around each octagon. There are two types, so we once again use Theorem \ref{thm:brid} to get the inequalities:

\begin{align}
\label{eq:al}
8 \alpha_L &\geq 2 \pi \\ 
\textrm{i.e., }\alpha_L &\geq \frac{\pi}{4} \nonumber
\end{align}

\begin{align}
\label{eq:alas}
4 \alpha_L + 4 \alpha_S &\geq 2 \pi \\ 
\textrm{i.e., }\alpha_L + \alpha_S &\geq \frac{\pi}{2} \nonumber
\end{align}

\end{example}

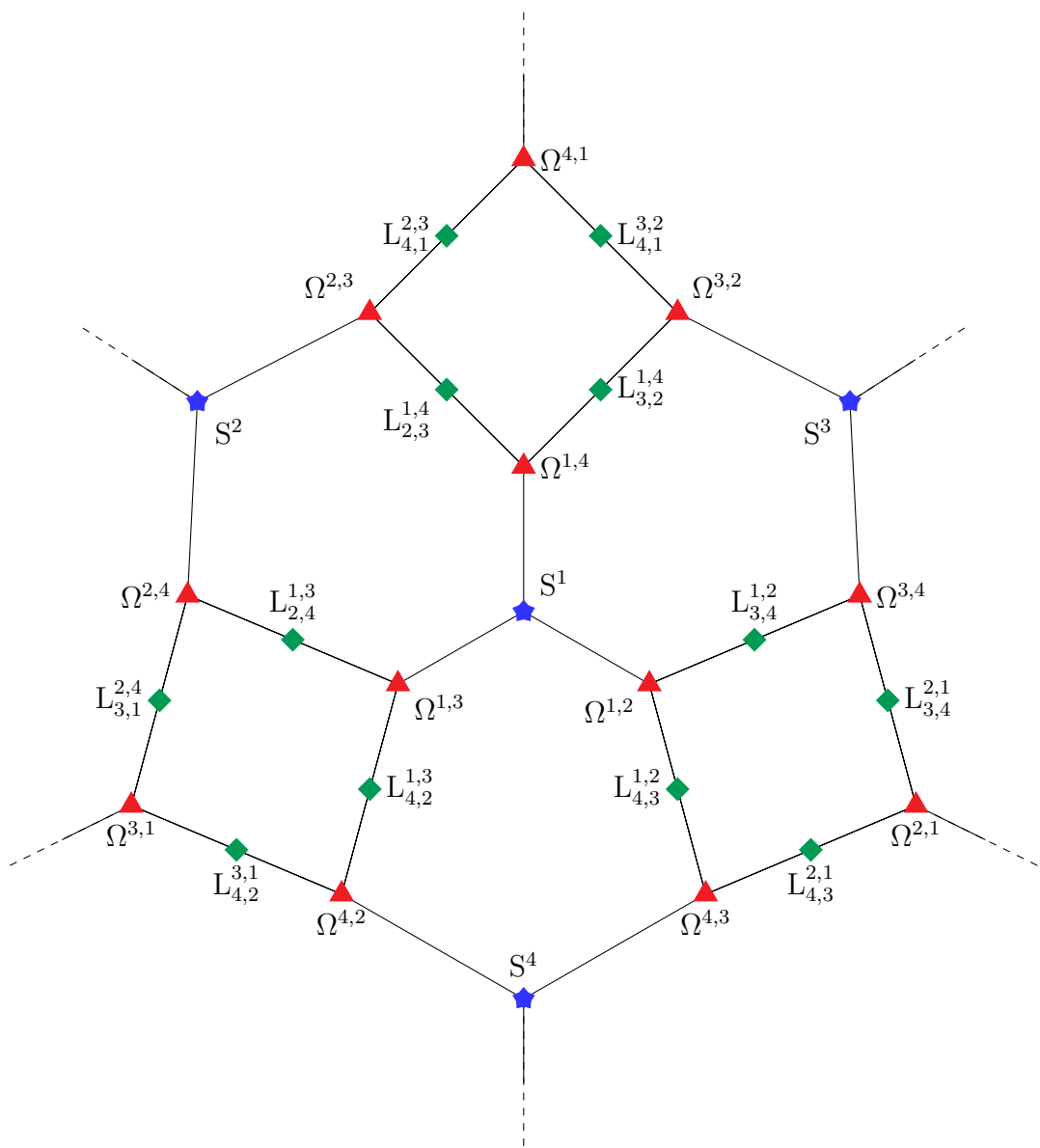
\begin{figure}
\hspace*{-3.5cm}%
\begin{tikzpicture}[scale=2]
\centering

\node (14) [circle,fill,inner sep=1.5pt,label=0:{$\Omega^{1,4}$}] at (0, 1) {};
\node (13) [circle,fill,inner sep=1.5pt,label=-5:{$\Omega^{1,3}$}] at (-0.866, -0.5) {};
\node (134) [circle,fill,inner sep=1.5pt,label=225:{$\Omega^{1,2}$}] at (0.866, -0.5) {};
\node (S1) [star,fill,inner sep=2pt,label=45:{$\mathrm{S}^{1}$}] at (0, 0) {};
\draw [color=black] ($(14)$) -- ($(S1)$);
\draw [color=black] ($(13)$) -- ($(S1)$);
\draw [color=black] ($(134)$) -- ($(S1)$);

\node (32) [circle,fill,inner sep=1.5pt,label=45:{$\Omega^{3,2}$}] at (1.0607, 2.0607) {};
\node (1432) [circle,fill,inner sep=1.5pt,label=0:{$\mathrm{L}^{1,4}_{3,2}$}] at ($0.5*(14) + 0.5*(32)$) {};
\draw [color=black] ($(14)$) -- ($(32)$) -- ($(1432)$) -- ($(14)$) -- cycle;

\node (23) [circle,fill,inner sep=1.5pt,label=135:{$\Omega^{2,3}$}] at (-1.0607, 2.0607) {};
\node (1423) [circle,fill,inner sep=1.5pt,label=200:{$\mathrm{L}^{1,4}_{2,3}$}] at ($0.5*(14) + 0.5*(23)$) {};
\draw [color=black] ($(14)$) -- ($(23)$) -- ($(1423)$) -- ($(14)$) -- cycle;

\node (423) [circle,fill,inner sep=1.5pt,label=0:{$\Omega^{4,1}$}] at (0, 3.1214) {};
\node (23423) [circle,fill,inner sep=1.5pt,label=180:{$\mathrm{L}^{2,3}_{4,1}$}] at ($0.5*(23) + 0.5*(423)$) {};
\node (32423) [circle,fill,inner sep=1.5pt,label=0:{$\mathrm{L}^{3,2}_{4,1}$}] at ($0.5*(32) + 0.5*(423)$) {};
\draw [color=black] ($(423)$) -- ($(23)$) -- ($(23423)$) -- ($(423)$) -- cycle;
\draw [color=black] ($(423)$) -- ($(32)$) -- ($(32423)$) -- ($(423)$) -- cycle;

\node (24) [circle,fill,inner sep=1.5pt,label=180:{$\Omega^{2,4}$}] at (-2.315, 0.1118) {};
\node (1324) [circle,fill,inner sep=1.5pt,label=90:{$\mathrm{L}^{1,3}_{2,4}$}] at ($0.5*(13) + 0.5*(24)$) {};
\draw [color=black] ($(13)$) -- ($(24)$) -- ($(1324)$) -- ($(13)$) -- cycle;

\node (42) [circle,fill,inner sep=1.5pt,label=270:{$\Omega^{4,2}$}] at (-1.2542, -1.94889) {};
\node (1342) [circle,fill,inner sep=1.5pt,label=0:{$\mathrm{L}^{1,3}_{4,2}$}] at ($0.5*(13) + 0.5*(42)$) {};
\draw [color=black] ($(13)$) -- ($(42)$) -- ($(1342)$) -- ($(13)$) -- cycle;

\node (324) [circle,fill,inner sep=1.5pt,label=270:{$\Omega^{3,1}$}] at (-2.7032, -1.3371) {};
\node (24324) [circle,fill,inner sep=1.5pt,label=180:{$\mathrm{L}^{2,4}_{3,1}$}] at ($0.5*(24) + 0.5*(324)$) {};
\node (32442) [circle,fill,inner sep=1.5pt,label=270:{$\mathrm{L}^{3,1}_{4,2}$}] at ($0.5*(324) + 0.5*(42)$) {};
\draw [color=black] ($(324)$) -- ($(24)$) -- ($(24324)$) -- ($(324)$) -- cycle;
\draw [color=black] ($(324)$) -- ($(42)$) -- ($(32442)$) -- ($(324)$) -- cycle;

\node (34) [circle,fill,inner sep=1.5pt,label=0:{$\Omega^{3,4}$}] at (2.315, 0.1118) {};
\node (13434) [circle,fill,inner sep=1.5pt,label=90:{$\mathrm{L}^{1,2}_{3,4}$}] at ($0.5*(134) + 0.5*(34)$) {};
\draw [color=black] ($(134)$) -- ($(34)$) -- ($(13434)$) -- ($(134)$) -- cycle;

\node (43) [circle,fill,inner sep=1.5pt,label=270:{$\Omega^{4,3}$}] at (1.2542, -1.94889) {};
\node (13443) [circle,fill,inner sep=1.5pt,label=180:{$\mathrm{L}^{1,2}_{4,3}$}] at ($0.5*(134) + 0.5*(43)$) {};
\draw [color=black] ($(134)$) -- ($(43)$) -- ($(13443)$) -- ($(134)$) -- cycle;

\node (234) [circle,fill,inner sep=1.5pt,label=270:{$\Omega^{2,1}$}] at (2.7032, -1.3371) {};
\node (23434) [circle,fill,inner sep=1.5pt,label=0:{$\mathrm{L}^{2,1}_{3,4}$}] at ($0.5*(234) + 0.5*(34)$) {};
\node (23443) [circle,fill,inner sep=1.5pt,label=270:{$\mathrm{L}^{2,1}_{4,3}$}] at ($0.5*(234) + 0.5*(43)$) {};
\draw [color=black] ($(234)$) -- ($(34)$) -- ($(23434)$) -- ($(234)$) -- cycle;
\draw [color=black] ($(234)$) -- ($(43)$) -- ($(23443)$) -- ($(234)$) -- cycle;

\node (V234) at (-3.3757,2.173) {};
\node (V23) at (5.0161,-1.18829) {};
\node (V24) at (3.9885,-3.2657) {};

\node (S2) [star,fill,inner sep=2pt,label=-45:{$\mathrm{S}^{2}$}] at ($0.333*(V234) + 0.333*(23) + 0.333*(24)$) {};
\node (VS2) at ($0.333*(234) + 0.333*(V23) + 0.333*(V24)$) {};
\draw [color=black] ($(24)$) -- ($(S2)$);
\draw [color=black] ($(23)$) -- ($(S2)$);
\draw [color=black] ($0.4*(V234) + 0.6*(S2)$) -- ($(S2)$);
\draw [dashed,color=black] ($0.7*(V234) + 0.3*(S2)$) -- ($(S2)$);
\draw [color=black] ($0.6*(234) + 0.4*(VS2)$) -- ($(234)$);
\draw [dashed,color=black] ($0.3*(234) + 0.7*(VS2)$) -- ($(234)$);

\node (V324) at (3.3757,2.173) {};
\node (V32) at (-5.0161,-1.18829) {};
\node (V34) at (-3.9885,-3.2657) {};

\node (S3) [star,fill,inner sep=2pt, label=225:{$\mathrm{S}^{3}$}] at ($0.333*(V324) + 0.333*(32) + 0.333*(34)$) {};
\node (VS3) at ($0.333*(324) + 0.333*(V32) + 0.333*(V34)$) {};
\draw [color=black] ($(34)$) -- ($(S3)$);
\draw [color=black] ($(32)$) -- ($(S3)$);
\draw [color=black] ($0.4*(V324) + 0.6*(S3)$) -- ($(S3)$);
\draw [dashed,color=black] ($0.7*(V324) + 0.3*(S3)$) -- ($(S3)$);
\draw [color=black] ($0.6*(324) + 0.4*(VS3)$) -- ($(324)$);
\draw [dashed,color=black] ($0.3*(324) + 0.7*(VS3)$) -- ($(324)$);

\node (V423) at (0,-4.1212) {};
\node (V42) at (-1.2542,5.29374) {};
\node (V43) at (1.2542,5.29374) {};

\node (S4) [star,fill,inner sep=2pt,label=90:{$\mathrm{S}^{4}$}] at ($0.333*(V423) + 0.333*(43) + 0.333*(42)$) {};
\node (VS4) at ($0.333*(423) + 0.333*(V43) + 0.333*(V42)$) {};
\draw [color=black] ($(42)$) -- ($(S4)$);
\draw [color=black] ($(43)$) -- ($(S4)$);
\draw [color=black] ($0.4*(V423) + 0.6*(S4)$) -- ($(S4)$);
\draw [dashed,color=black] ($0.7*(V423) + 0.3*(S4)$) -- ($(S4)$);
\draw [color=black] ($0.6*(423) + 0.4*(VS4)$) -- ($(423)$);
\draw [dashed,color=black] ($0.3*(423) + 0.7*(VS4)$) -- ($(423)$);

\node [regular polygon,regular polygon sides=3,fill,color=Red,inner sep=2pt] at ($(14)$) {};
\node [regular polygon,regular polygon sides=3,fill,color=Red,inner sep=2pt] at ($(13)$) {};
\node [regular polygon,regular polygon sides=3,fill,color=Red,inner sep=2pt] at ($(134)$) {};
\node [regular polygon,regular polygon sides=3,fill,color=Red,inner sep=2pt] at ($(23)$) {};
\node [regular polygon,regular polygon sides=3,fill,color=Red,inner sep=2pt] at ($(24)$) {};
\node [regular polygon,regular polygon sides=3,fill,color=Red,inner sep=2pt] at ($(234)$) {};
\node [regular polygon,regular polygon sides=3,fill,color=Red,inner sep=2pt] at ($(32)$) {};
\node [regular polygon,regular polygon sides=3,fill,color=Red,inner sep=2pt] at ($(34)$) {};
\node [regular polygon,regular polygon sides=3,fill,color=Red,inner sep=2pt] at ($(324)$) {};
\node [regular polygon,regular polygon sides=3,fill,color=Red,inner sep=2pt] at ($(42)$) {};
\node [regular polygon,regular polygon sides=3,fill,color=Red,inner sep=2pt] at ($(43)$) {};
\node [regular polygon,regular polygon sides=3,fill,color=Red,inner sep=2pt] at ($(423)$) {};

\node [diamond,fill,color=ForestGreen,inner sep=2.4pt] at ($(1423)$) {};
\node [diamond,fill,color=ForestGreen,inner sep=2.4pt] at ($(1432)$) {};
\node [diamond,fill,color=ForestGreen,inner sep=2.4pt] at ($(32423)$) {};
\node [diamond,fill,color=ForestGreen,inner sep=2.4pt] at ($(23423)$) {};
\node [diamond,fill,color=ForestGreen,inner sep=2.4pt] at ($(1324)$) {};
\node [diamond,fill,color=ForestGreen,inner sep=2.4pt] at ($(24324)$) {};
\node [diamond,fill,color=ForestGreen,inner sep=2.4pt] at ($(32442)$) {};
\node [diamond,fill,color=ForestGreen,inner sep=2.4pt] at ($(1342)$) {};
\node [diamond,fill,color=ForestGreen,inner sep=2.4pt] at ($(13443)$) {};
\node [diamond,fill,color=ForestGreen,inner sep=2.4pt] at ($(23443)$) {};
\node [diamond,fill,color=ForestGreen,inner sep=2.4pt] at ($(23434)$) {};
\node [diamond,fill,color=ForestGreen,inner sep=2.4pt] at ($(13434)$) {};

\node [star,fill,color=blue!80!white,inner sep=2.2pt] at ($(S1)$) {};
\node [star,fill,color=blue!80!white,inner sep=2.2pt] at ($(S2)$) {};
\node [star,fill,color=blue!80!white,inner sep=2.2pt] at ($(S3)$) {};
\node [star,fill,color=blue!80!white,inner sep=2.2pt] at ($(S4)$) {};

\end{tikzpicture}

\caption[The Link of $\Theta^0$]{The link of $\left[\textrm{id},\Theta^0\right]$ in $\K_4$. All of the adjacent vertices are labeled with $\mathrm{id} \in \Out^0(W_4)$, so this is the same as the link of $\Theta^0$ in $\HT_4$. The vertices are labeled with their corresponding hypertree. The blue stars are star trees, the green diamonds are line trees, and the red triangles are hypertrees in $\mathcal{M}_4^1$. Dashed lines connect to the other side of the link. (See Piggott \cite{Piggott} for another picture.)}
\label{fig:lknuc}
\end{figure}

This is enough information to show that no angle solutions are possible.

\begin{theorem}
\label{thm:noout}
There does not exist an $\Out(W_4)$-equivariant piecewise Euclidean (or piecewise hyperbolic) $\cat(0)$ $\LP\cat(-1)\RP$ metric on $\K_4$. 
\end{theorem}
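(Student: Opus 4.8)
The plan is simply to observe that the eight inequalities \eqref{eq:TL}--\eqref{eq:alas}, assembled from Theorem \ref{thm:brid} together with the four link computations above, are already mutually inconsistent. Since Theorem \ref{thm:link} and Theorem \ref{thm:brid} hold uniformly for every $M_\kappa$-complex with $\kappa\le 0$, this single contradiction rules out both the piecewise Euclidean $\cat(0)$ case and the piecewise hyperbolic $\cat(-1)$ case at once. (Note that $\K_4$ is contractible, hence simply connected, so the simple-connectivity hypothesis in Theorem \ref{thm:link} is automatic and the whole obstruction lives in the link conditions.)

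First I would suppose, for contradiction, that such a metric exists, so that the angles $\alpha_L,\beta_L,\gamma_L,\alpha_S,\beta_S,\gamma_S$ of Definition \ref{def:ang} are defined and satisfy all of \eqref{eq:TL}--\eqref{eq:alas}. Adding the two triangle inequalities \eqref{eq:TL} and \eqref{eq:TS} gives
\[
(\alpha_L+\alpha_S) + (\beta_L+\beta_S) + (\gamma_L+\gamma_S) \;\le\; 2\pi .
\]
On the other hand, the link inequalities supply a lower bound for each of these three sums: \eqref{eq:alas} gives $\alpha_L+\alpha_S\ge \tfrac{\pi}{2}$, \eqref{eq:blbs} gives $\beta_L+\beta_S\ge \pi$, and \eqref{eq:gl} together with \eqref{eq:gs} gives $\gamma_L+\gamma_S\ge \tfrac{\pi}{4}+\tfrac{\pi}{3}=\tfrac{7\pi}{12}$. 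Combining,
\[
2\pi \;\ge\; (\alpha_L+\alpha_S)+(\beta_L+\beta_S)+(\gamma_L+\gamma_S) \;\ge\; \frac{\pi}{2}+\pi+\frac{7\pi}{12} \;=\; \frac{25\pi}{12} \;>\; 2\pi ,
\]
which is absurd; hence no such metric exists. (One could equally run the argument through \eqref{eq:al} and \eqref{eq:bl}: those force $\alpha_L=\gamma_L=\tfrac{\pi}{4}$ and $\beta_L=\tfrac{\pi}{2}$ to make \eqref{eq:TL} an equality, after which \eqref{eq:blbs}, \eqref{eq:alas}, \eqref{eq:gs} push $\alpha_S+\beta_S+\gamma_S\ge \tfrac{13\pi}{12}>\pi$ against \eqref{eq:TS}. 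Either route works, and the slack of $\tfrac{\pi}{12}$ shows the conclusion is fairly robust.)

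The genuinely laborious part of this argument has already been carried out in the four examples above, namely computing the link of a representative of each $\Out(W_4)$-orbit of vertices of $\K_4$ (the nuclear vertices $[\alpha,\Theta^0]$, the omega vertices $[\alpha,\Omega^{i,j}]$, the line vertices $[\alpha,L^{i,j}_{k,l}]$, and the star vertices $[\alpha,S^i]$) and reading off their girths. The only thing I would double-check before committing to the proof is the completeness and correctness of that list: that these four orbits really do exhaust the vertices of $\K_4$ (which follows from the poset description of $\HTc_4$ and the transitivity of the $\Out(W_4)$-action established in Section \ref{sec:mmspace}), and that the cycles singled out in each link are genuinely the shortest injective loops, so that \eqref{eq:bl}, \eqref{eq:blbs}, \eqref{eq:gl}, \eqref{eq:gs}, \eqref{eq:al}, \eqref{eq:alas} are exactly what the link geometry forces. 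I expect no real obstacle here: once the link combinatorics is in hand, the obstruction is a finite and entirely elementary system of linear inequalities in six unknowns.
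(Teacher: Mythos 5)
Your proof is correct and follows the same overall strategy as the paper: derive the link-condition inequalities \eqref{eq:TL}--\eqref{eq:alas} from the four orbit-representative links (done in the preceding examples), then exhibit the inconsistency of that linear system. The one genuine difference is in the final algebra. The paper first pins down $\alpha_L = \gamma_L = \pi/4$ and $\beta_L = \pi/2$ exactly using \eqref{eq:TL}, \eqref{eq:al}, \eqref{eq:bl}, \eqref{eq:gl}, then pushes those values through \eqref{eq:alas}, \eqref{eq:blbs}, \eqref{eq:TS} to contradict \eqref{eq:gs}. You instead add \eqref{eq:TL} and \eqref{eq:TS} to obtain the single bound $(\alpha_L+\alpha_S)+(\beta_L+\beta_S)+(\gamma_L+\gamma_S)\le 2\pi$, then bound each paired sum from below by \eqref{eq:alas}, \eqref{eq:blbs}, and \eqref{eq:gl}$+$\eqref{eq:gs}, yielding $25\pi/12 > 2\pi$ at once. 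This is cleaner: it bypasses \eqref{eq:al} and \eqref{eq:bl} entirely (six of the eight inequalities suffice), produces the contradiction in one line, and makes the slack of $\pi/12$ visible, which tells you the obstruction is not a knife-edge phenomenon. One minor note: your aside about contractibility and simple connectedness is harmless but not needed here, since the argument only uses the forward implication of Theorem \ref{thm:brid} ($\cat(\kappa)$ complex $\Rightarrow$ angle inequalities), which does not require verifying simple connectedness.
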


\begin{proof}
If there did exist such a metric, then by Theorem  \ref{thm:brid}, there would exist angles $\alpha_L,\alpha_S,\beta_L,\beta_S,\gamma_L, \gamma_S \in (0,\pi]$ that satisfy Inequalities \eqref{eq:TL} - \eqref{eq:alas} above. Let us show that these are inconsistent. 

\begin{align*}
\pi \geq \alpha_L + \beta_L + \gamma_L &\geq \frac{\pi}{4} + \frac{\pi}{2} + \frac{\pi}{4} = \pi &\textrm{by \eqref{eq:TL}, \eqref{eq:al}, \eqref{eq:bl}, \eqref{eq:gl}} \\
\implies \alpha_L + \beta_L + \gamma_L &= \pi \numberthis \label{eq:alblgl} \\
\implies \alpha_L = \pi - \beta_L - \gamma_L &\leq \pi - \frac{\pi}{2} - \frac{\pi}{4} = \frac{\pi}{4} \leq \alpha_L &\textrm{by \eqref{eq:bl}, \eqref{eq:gl}, \eqref{eq:al}} \\
\implies \alpha_L &= \frac{\pi}{4} \numberthis \label{eq:alf} \\ \\
\end{align*}
\begin{align*}
\beta_L = \pi - \alpha_L - \gamma_L &= \frac{3\pi}{4} - \gamma_L \leq \frac{3\pi}{4} - \frac{\pi}{4} = \frac{\pi}{2} \leq \beta_L &\textrm{by \eqref{eq:alblgl}, \eqref{eq:alf}, \eqref{eq:gl}, \eqref{eq:bl}} \\
\implies \beta_L &= \frac{\pi}{2}  \numberthis \label{eq:blf} \\ \\
\end{align*}
\begin{align*}
\gamma_L = \pi - \alpha_L - \beta_L &= \pi - \frac{\pi}{4} - \frac{\pi}{2} = \frac{\pi}{4} &\textrm{by \eqref{eq:alblgl}, \eqref{eq:alf}, \eqref{eq:blf}} \\ 
\implies \gamma_L &= \frac{\pi}{4} \numberthis \label{eq:glf} \\ \\
\end{align*}
\begin{align*}
\alpha_S \geq \frac{\pi}{2} - \alpha_L &= \frac{\pi}{2} - \frac{\pi}{4} = \frac{\pi}{4} &\textrm{by \eqref{eq:alas}, \eqref{eq:alf}} \\
\implies \alpha_S &\geq \frac{\pi}{4} \numberthis \label{eq:asf} \\ \\
\end{align*}
\begin{align*}
\beta_S \geq \pi - \beta_L &= \pi - \frac{\pi}{2} = \frac{\pi}{2} &\textrm{by \eqref{eq:blbs}, \eqref{eq:blf}} \\
\implies \beta_S &\geq \frac{\pi}{2} \numberthis \label{eq:bsf} \\ \\
\end{align*}
\begin{align*}
\gamma_S \leq \pi - \alpha_S - \beta_S &\leq \pi - \frac{\pi}{4} - \frac{\pi}{2} = \frac{\pi}{4} &\textrm{by \eqref{eq:TS}, \eqref{eq:asf}, \eqref{eq:bsf}} \\
\implies \gamma_S &\leq \frac{\pi}{4}  \numberthis \label{eq:gsf} \\ \\
\end{align*}
\begin{align*}
\therefore \ \  \frac{\pi}{3} \leq \gamma_S &\leq \frac{\pi}{4} &\textrm{by \eqref{eq:gs}, \eqref{eq:gsf}} 
\end{align*}

This is a contradiction, and so we are done. 

\end{proof}

\subsection{The $\textrm{Out}^0(W_4)$ Case}
\label{subsec:out0}

Since being a $\cat(\kappa)$ group is not a property that is in general preserved under finite extension, it is possible that $\Out^0(W_4)$ is a $\cat(\kappa)$ group, but $\Out(W_4)$ is not. So while $\K_4$ could not be made into a $\cat(\kappa)$ $M_{\kappa}$-simplicial complex that was equivariant with respect to the full $\Out(W_4)$ action, it is \emph{a priori} possible that we could relax the requirement and obtain a metric only equivariant with respect to the induced $\Out^0(W_4)$ action. It turns out that this is still impossible.

In Subsection \ref{subsec:out}, the quotient of $\K_4$ by $\Out(W_4)$ consisted of two simplices, and so only eight angle variables were necessary to consider. On the other hand, the quotient of $\K_4$ by $\Out(W_4)$ is a full copy of $\HT_4$, which consists of 24 L-simplices and 12 S-simplices, for a total of 36 simplices and so 108 angles. Our number of inequalities will rise as well. For instance, there will be 24 of type \eqref{eq:TL}, 12 of type \eqref{eq:TS}, and so on. There will even be additional forms of inequalities such as $\beta_{L^{i,j}_{k,l}} + \beta_{L^{i,j}_{k,l}} + \beta_{L^{i,j}_{l,k}} + \beta_{L^{i,j}_{l,k}} \geq 2 \pi$, since in the link of $\left[\textrm{id},\Omega^{i,j}\right]$, the vertex angles connecting to the different line graphs could now be different. So our direct approach in Theorem \ref{thm:noout} is too cumbersome to try again identically. Instead, we'll use the additional $\Sigma_4$ symmetry in the quotient $\HT_4$ to simplify the calculations and prove the following theorem. 

\begin{theorem}
\label{thm:noout0}
There does not exist an $\Out^0(W_4)$-equivariant piecewise Euclidean (or piecewise hyperbolic) $\cat(0)$ $\LP\cat(-1)\RP$ metric on $\K_4$. 
\end{theorem}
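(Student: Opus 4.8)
The plan is to reduce the $\Out^0(W_4)$-equivariant problem to the $\Out(W_4)$-equivariant one already settled in Theorem~\ref{thm:noout}, by exploiting the fact that $\Out^0(W_4)\trianglelefteq\Out(W_4)$ with finite quotient $\Sigma_4$ and averaging over it. Concretely, suppose for contradiction that $\K_4$ admits an $\Out^0(W_4)$-equivariant piecewise Euclidean (or piecewise hyperbolic) $\cat(\kappa)$ $M_\kappa$-simplicial structure for some $\kappa\le 0$, and let $a$ be the resulting \emph{angle function}: to each corner $(v,\sigma)$ of $\K_4$ (a vertex $v$ together with a simplex $\sigma$ containing it) it assigns the vertex angle of $\sigma$ at $v$. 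Equivariance means $a$ is constant on $\Out^0(W_4)$-orbits of corners, hence determined by the finitely many corners in a fundamental domain; equivalently, $a$ descends to an angle function on the quotient $\HTc_4\cong\K_4/\Out^0(W_4)$. By Theorem~\ref{thm:brid}, $a$ satisfies the linear constraints
\[
\textstyle\sum_{\text{corners of }\sigma} a \le \pi \quad(\text{all }2\text{-simplices }\sigma),\qquad \sum_{\text{corners along }\ell} a \ge 2\pi \quad(\text{all injective loops }\ell\subseteq\Lk(v)),
\]
with all values in $(0,\pi]$.

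The key step is averaging. For $g\in\Out(W_4)$ the pushforward $g_{*}a := a\circ g^{-1}$ is again an angle function on $\K_4$; since $g$ normalizes $\Out^0(W_4)$ and $a$ is $\Out^0(W_4)$-invariant, $g_{*}a$ is $\Out^0(W_4)$-invariant and depends only on the coset $g\Out^0(W_4)$, and since $g$ acts on $\K_4$ by a simplicial automorphism, $g_{*}a$ still satisfies both constraint families above. Form the average $\bar a = \frac{1}{24}\sum_{g\Out^0(W_4)\in\Out(W_4)/\Out^0(W_4)} g_{*}a$ over the $24$ cosets. Convexity of the constraints — sums $\le\pi$ and sums $\ge 2\pi$ are preserved by averaging, and $(0,\pi]$ is convex — shows $\bar a$ still satisfies every constraint and still takes values in $(0,\pi]$. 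But $\bar a$ is now $\Out(W_4)$-invariant, so, using the two-simplex fundamental domain of Figure~\ref{fig:fundom} together with the transitivity statements of Section~\ref{sec:mmspace}, it is determined by the six numbers $\alpha_L,\beta_L,\gamma_L,\alpha_S,\beta_S,\gamma_S\in(0,\pi]$ of Definition~\ref{def:ang}.

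It then remains to observe that these six numbers satisfy Inequalities~\eqref{eq:TL}--\eqref{eq:alas}, after which Theorem~\ref{thm:noout} closes the argument. This is precisely what the link computations of Subsection~\ref{subsec:out} establish: applying the simplex constraint to one $L$-simplex and one $S$-simplex gives \eqref{eq:TL} and \eqref{eq:TS}; the large and small square loops in $\Lk([\mathrm{id},\Omega^{1,3}])$ give \eqref{eq:bl} and \eqref{eq:blbs}; the $8$-cycle in $\Lk([\mathrm{id},L^{1,3}_{2,4}])$ gives \eqref{eq:gl}; the $6$-cycle in $\Lk([\mathrm{id},S^1])$ gives \eqref{eq:gs}; and the two octagons in $\Lk([\mathrm{id},\Theta^0])$ give \eqref{eq:al} and \eqref{eq:alas}. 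In each of these loops every corner encountered lies in a single $\Out(W_4)$-orbit of corners (all $\Omega$-corners of $L$-simplices, all $L$-corners of $L$-simplices, and so on), so the $\Out(W_4)$-invariant function $\bar a$ genuinely satisfies these eight inequalities with the constant values above. The proof of Theorem~\ref{thm:noout} shows that no tuple in $(0,\pi]^6$ satisfies all of \eqref{eq:TL}--\eqref{eq:alas}; this contradicts the existence of $\bar a$, hence of the original metric.

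I expect the only real work to be bookkeeping: checking that the full collection of simplex and link inequalities on $\K_4$ is $\Sigma_4$-invariant (equivalently, descends to $\HTc_4\cong\K_4/\Out^0(W_4)$ with its residual $\Sigma_4$-action), and identifying the $\Sigma_4$-orbits of corners with the six corners of Figure~\ref{fig:fundom}, so that the averaged solution lands exactly on the system already proved inconsistent. No curvature input beyond Theorems~\ref{thm:brid} and~\ref{thm:noout} should be needed; the entire argument is to symmetrize and then quote the $\Out(W_4)$ case.
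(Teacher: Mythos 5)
Your proposal is correct and follows essentially the same averaging strategy as the paper: both exploit the $\Sigma_4\cong\Out(W_4)/\Out^0(W_4)$ symmetry to reduce the $\Out^0(W_4)$-equivariant case to the already-settled $\Out(W_4)$-equivariant one. The paper's explicit ``average angles'' of Definition~\ref{def:ang0d}, obtained by averaging each of the $108$ angles over the $\Sigma_4$ action on labels and then averaging each class of inequalities \eqref{eq:TL0}--\eqref{eq:alas0}, are exactly your $\bar a$ restricted to the fundamental domain; your pushforward formulation is slightly more abstract but logically the same.
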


First, we need to find a convenient way to name these 108 variables and describe their inequalities.

\begin{definition}
\label{def:ang0}
Suppose $K_4$ has been given an $\Out^0(W_4)$-equivariant metric to turn it into a $M_{\kappa}$-polyhedral complex. Since the metric is $\Out^0(W_4)$-equivariant, it suffices to assign an angle to each corner of each simplex in the fundamental domain of the action in order to specify an angle in every corner of every simplex of $\K_4$. The fundamental domain is isometric to the quotient $\HT_4$. So let the angles be defined as follows:
\begin{enumerate}
\item In any L-simplex, there are vertices of the form $\left[\alpha,\Theta^0 \right]$, $\left[\alpha,L^{i,j}_{k,l} \right]$, and either $\left[\alpha,\Omega^{i,j} \right]$ or $\left[\alpha,\Omega^{k,l} \right]$. Since $L^{i,j}_{k,l}$ is the same labeled hypertree as $L^{k,l}_{i,j}$, we usually restrict the indexing to $i < k$, i.e., the smaller of the two is in the superscript. However, in the L-simplex, we also want to keep track of which $\Omega$ vertex is present. So we will subscript the angles in this simplex with $L^{i,j}_{k,l}$ where the $\{i,j\}$ superscript indicates which $\Omega^{i,j}$ is present. So for 
instance, $\alpha_{L^{1,3}_{2,4}}$ will be the vertex angle of $\left[\alpha,\Theta^0 \right]$, $\beta_{L^{1,3}_{2,4}}$ will be the vertex angle of $\left[\alpha,\Omega^{1,3}\right]$, and 
$\gamma_{L^{1,3}_{2,4}}$ will be the vertex angle of 
$\left[\alpha,L^{1,3}_{2,4}\right]$. On the other hand, 
$\alpha_{L^{2,4}_{1,3}}$ will be the vertex angle of 
$\left[\alpha,\Theta^0 \right]$, $\beta_{L^{2,4}_{1,3}}$ will be the vertex angle of $\left[\alpha,\Omega^{2,4}\right]$, and 
$\gamma_{L^{2,4}_{1,3}}$ 
will be the vertex angle of $\left[\alpha,L^{2,4}_{1,3}\right] = \left[\alpha,L^{1,3}_{2,4}\right]$.
\item In any S-simplex, there are vertices of the form $\left[\alpha,\Theta^0 \right]$,  $\left[\alpha,\Omega^{i,j} \right]$, and $\left[\alpha,S^{i} \right]$. The indexing is much easier here, since adding the $\{i,j\}$ superscript uniquely specifies the star tree. So for instance, $\alpha_{S^{1,3}}$ will be the vertex angle of $\left[\alpha,\Theta^0\right]$, $\beta_{S^{1,3}}$ will be the vertex angle of $\left[\alpha,\Omega^{1,3}\right]$, and $\gamma_{S^{1,3}}$ will be the vertex angle of $\left[\alpha,S^{1}\right]$.
\end{enumerate}
\end{definition}

\begin{notation*}
Throughout the rest of this section, we adopt the convention that when indexes $i$, $j$, $k$, and $l$ appear in subscripts and superscripts of the hypertree or angle notation, it is assumed that the indexes are drawn from $[4]$, are distinct, and that the listed inequalities hold for all such choices of the indices. 
\end{notation*}

By Theorem \ref{thm:brid}, we get these inequalities for each simplex in $\HT_4$:

\begin{align}\label{eq:TL0}
\alpha_{L^{i,j}_{k,l}} + \beta_{L^{i,j}_{k,l}} + \gamma_{L^{i,j}_{k,l}} \leq \pi
\end{align}

\begin{align}\label{eq:TS0}
\alpha_{S^{i,j}} + \beta_{S^{i,j}} + \gamma_{S^{i,j}} \leq \pi
\end{align}

Now we need to re-examine injective loops in the links of vertices in $\K_4$ to find appropriate inequalities. All of the links of vertices look identical to the links is Subsection \ref{subsec:out} except that the angle labels now have (possibly different) indices. These indices are determined by the indices of the adjacent hypertrees but not the $\Out^0(W_4)$ label. See Figure \ref{fig:lkomeg0}.

\begin{figure}
\hspace*{-1.5cm}
\begin{tikzpicture}[scale=1.5]
\centering

\node (S1) [star,fill,inner sep=2pt,label=0:{$\left[\mathrm{id},\mathrm{S}^{1}\right]$}] at (0, 0) {};
\node (S1L) at ($(S1)+(0.75,-0.4)$) {$= \left[x_{1,\{3\}},\mathrm{S}^{1}\right]$};
\node (1324) [circle,fill,inner sep=1.5pt,label=0:{$\left[\mathrm{id},\mathrm{L}^{1,3}_{2,4}\right] = \left[x_{1,\{3\}},\mathrm{L}^{1,2}_{3,4}\right]$}] at (2, 0) {};
\node (1342) [circle,fill,inner sep=1.5pt,label=180:{$\left[\mathrm{id},\mathrm{L}^{1,3}_{4,2}\right] = \left[x_{1,\{3\}},\mathrm{L}^{1,3}_{4,2}\right]$}] at (-2, 0) {};
\node (T1) [circle,fill,inner sep=1.5pt,label=-90:{$\left[\mathrm{id},\Theta^0\right]$}] at (0, -2) {};
\node (T2) [circle,fill,inner sep=1.5pt,label=90:{$\left[x_{1,\{3\}},\Theta^0\right]$}] at (0, 2) {};

\draw [color=black] ($(T1)$) -- ($(S1)$);
\draw [color=black] ($(T1)$) -- ($(1324)$);
\draw [color=black] ($(T1)$) -- ($(1342)$);
\draw [color=black] ($(T2)$) -- ($(S1)$);
\draw [color=black] ($(T2)$) -- ($(1324)$);
\draw [color=black] ($(T2)$) -- ($(1342)$);

\node [diamond,fill,color=ForestGreen,inner sep=2.4pt] at ($(1324)$) {};
\node [diamond,fill,color=ForestGreen,inner sep=2.4pt] at ($(1342)$) {};

\node [circle,fill,color=Plum,inner sep=2.6pt] at ($(T1)$) {};
\node [circle,fill,color=Plum,inner sep=2.6pt] at ($(T2)$) {};

\node [star,fill,color=blue!80!white,inner sep=3pt] at ($(S1)$) {};

\node at ($0.5*(T1)+ 0.5*(1342) + (-0.2,-0.2)$) {$\beta_{\textrm{L}^{1,3}_{4,2}}$};
\node at ($0.5*(T2)+ 0.5*(1342) + (-0.2,0.2)$) {$\beta_{\textrm{L}^{1,3}_{4,2}}$};
\node at ($0.5*(T2)+ 0.5*(1324) + (0.2,0.2)$) {$\beta_{\textrm{L}^{1,3}_{2,4}}$};
\node at ($0.5*(T1)+ 0.5*(1324) + (0.2,-0.3)$) {$\beta_{\textrm{L}^{1,3}_{2,4}}$};
\node at ($0.5*(T1)+ 0.5*(S1) + (-0.2,0)$) {$\beta_{\textrm{S}^{1}}$};
\node at ($0.5*(T2)+ 0.5*(S1) + (-0.2,0)$) {$\beta_{\textrm{S}^{1}}$};

\end{tikzpicture}

\caption[The Link of $\Omega^{1,3}$ with $\Out^0(W_4)$-Equivariant Angles]{Another picture of the link of $\left[\textrm{id},\Omega^{1,3}\right]$ in $\K_4$ with angle variables eqivariant with respect to the action of $\Out^0(W_4)$. For the $\Out(W_4)$ case, the picture is the same but we can ignore the indexing on the angles.}
\label{fig:lkomeg0}
\end{figure}
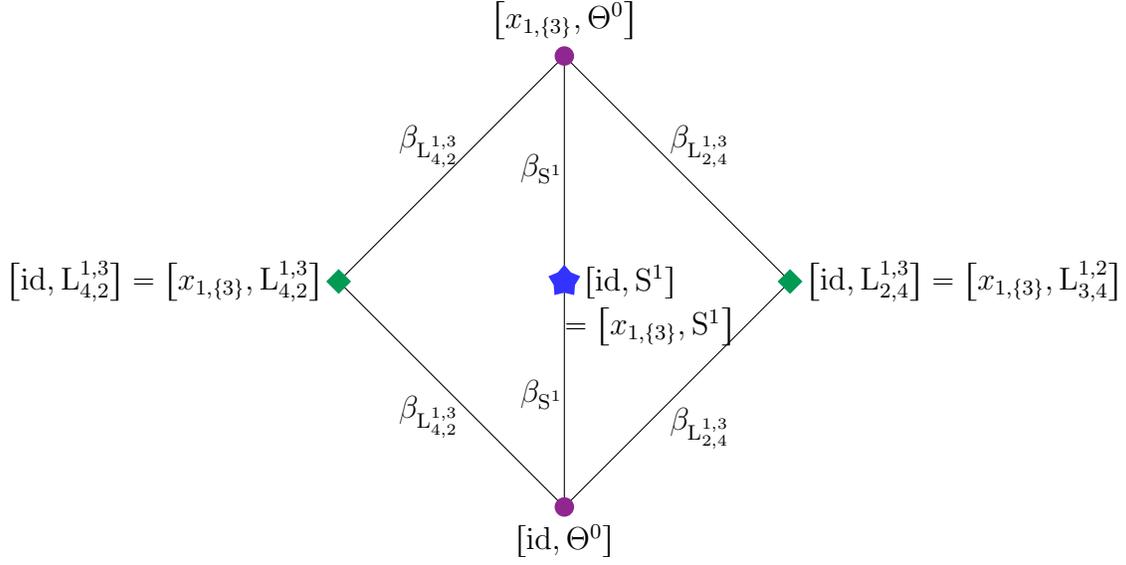

\begin{align}
\label{eq:bl0}
\beta_{L^{i,j}_{k,l}} + \beta_{L^{i,j}_{k,l}} + \beta_{L^{i,j}_{l,k}} + \beta_{L^{i,j}_{l,k}} &\geq 2 \pi \\ 
\textrm{i.e., }\beta_{L^{i,j}_{k,l}} + \beta_{L^{i,j}_{l,k}} &\geq \pi \nonumber
\end{align}

\begin{align}
\label{eq:blbs0}
\beta_{L^{i,j}_{k,l}} + \beta_{L^{i,j}_{k,l}} + \beta_{S^{i,j}} + \beta_{S^{i,j}} &\geq 2 \pi
\\ 
\textrm{i.e., }\beta_{L^{i,j}_{k,l}} + \beta_{S^{i,j}} &\geq \pi \nonumber
\end{align}

Notice that $\beta_{L^{i,j}_{l,k}} + \beta_{S^{i,j}} \geq \pi$, which is also an inequality derivable from that link, is included in Inequality \eqref{eq:blbs0} since our notation implicitly quantifies over the different possibilities for $k$ and $l$. 

We continue to examine injective loops in the links of vertices. 

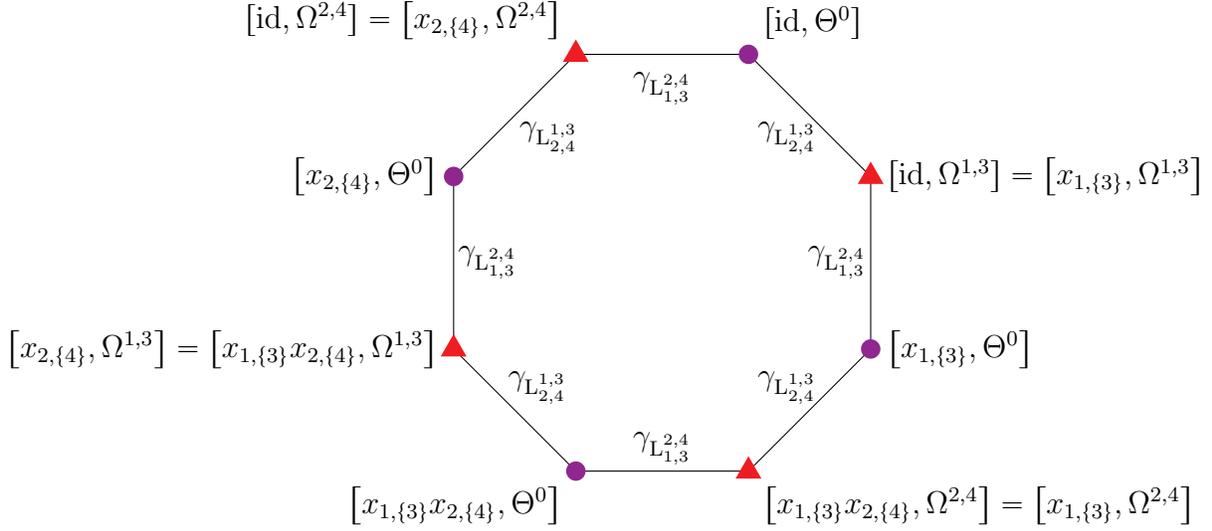
\begin{figure}
\hspace*{-2cm}
\begin{tikzpicture}[scale=1.5]
\centering

\node (13I) [circle,fill,inner sep=1.5pt,label=0:{$\left[\mathrm{id},\Omega^{1,3}\right] = \left[x_{1,\{3\}},\Omega^{1,3}\right]$}] at (1.848, 0.765) {};
\node (TI) [circle,fill,inner sep=1.5pt,label=45:{$\left[\mathrm{id},\Theta^0 \right]$}] at (0.765, 1.848) {};
\node (24I) [circle,fill,inner sep=1.5pt,label=135:{$\left[\mathrm{id},\Omega^{2,4}\right] = \left[x_{2,\{4\}},\Omega^{2,4}\right]$}] at (-0.765, 1.848) {};
\node (T24) [circle,fill,inner sep=1.5pt,label=180:{$\left[x_{2,\{4\}},\Theta^0\right]$}] at (-1.848, 0.765) {};
\node (1324) [circle,fill,inner sep=1.5pt,label=180:{$\left[x_{2,\{4\}},\Omega^{1,3}\right] = \left[x_{1,\{3\}}x_{2,\{4\}},\Omega^{1,3}\right]$}] at (-1.848, -0.765) {};
\node (T1324) [circle,fill,inner sep=1.5pt,label=225:{$\left[x_{1,\{3\}}x_{2,\{4\}},\Theta^0\right]$}] at (-0.765, -1.848) {};
\node (2413) [circle,fill,inner sep=1.5pt,label=-45:{$\left[x_{1,\{3\}}x_{2,\{4\}},\Omega^{2,4}\right] = \left[x_{1,\{3\}},\Omega^{2,4}\right]$}] at (0.765, -1.848) {};
\node (T13) [circle,fill,inner sep=1.5pt,label=0:{$\left[x_{1,\{3\}},\Theta^0\right]$}] at (1.848, -0.765) {};

\draw [color=black] ($(13I)$) -- ($(TI)$);
\draw [color=black] ($(24I)$) -- ($(TI)$);
\draw [color=black] ($(24I)$) -- ($(T24)$);
\draw [color=black] ($(1324)$) -- ($(T24)$);
\draw [color=black] ($(1324)$) -- ($(T1324)$);
\draw [color=black] ($(2413)$) -- ($(T1324)$);
\draw [color=black] ($(2413)$) -- ($(T13)$);
\draw [color=black] ($(13I)$) -- ($(T13)$);

\node [regular polygon,regular polygon sides=3,fill,color=Red,inner sep=2pt] at ($(13I)$) {};
\node [regular polygon,regular polygon sides=3,fill,color=Red,inner sep=2pt] at ($(24I)$) {};
\node [regular polygon,regular polygon sides=3,fill,color=Red,inner sep=2pt] at ($(1324)$) {};
\node [regular polygon,regular polygon sides=3,fill,color=Red,inner sep=2pt] at ($(2413)$) {};

\node [circle,fill,color=Plum,inner sep=2.6pt] at ($(TI)$) {};
\node [circle,fill,color=Plum,inner sep=2.6pt] at ($(T24)$) {};
\node [circle,fill,color=Plum,inner sep=2.6pt] at ($(T1324)$) {};
\node [circle,fill,color=Plum,inner sep=2.6pt] at ($(T13)$) {};

\node at ($0.5*(13I)+ 0.5*(TI) + (-0.2,-0.2)$) {$\gamma_{\textrm{L}^{1,3}_{2,4}}$};
\node at ($0.5*(24I)+ 0.5*(TI) + (0,-0.3)$) {$\gamma_{\textrm{L}^{2,4}_{1,3}}$};
\node at ($0.5*(24I)+ 0.5*(T24) + (0.3,-0.2)$) {$\gamma_{\textrm{L}^{1,3}_{2,4}}$};
\node at ($0.5*(1324)+ 0.5*(T24) + (0.3,0)$) {$\gamma_{\textrm{L}^{2,4}_{1,3}}$};
\node at ($0.5*(1324)+ 0.5*(T1324) + (0.2,0.2)$) {$\gamma_{\textrm{L}^{1,3}_{2,4}}$};
\node at ($0.5*(2413)+ 0.5*(T1324) + (0,0.2)$) {$\gamma_{\textrm{L}^{2,4}_{1,3}}$};
\node at ($0.5*(2413)+ 0.5*(T13) + (-0.2,0.2)$) {$\gamma_{\textrm{L}^{1,3}_{2,4}}$};
\node at ($0.5*(13I)+ 0.5*(T13) + (-0.3,0)$) {$\gamma_{\textrm{L}^{2,4}_{1,3}}$};

\end{tikzpicture}
\caption[The Link of $\textrm{L}^{1,3}_{2,4}$ with $\Out^0(W_4)$-Equivariant Angles]{Another picture of the link of $\left[\textrm{id},\textrm{L}^{1,3}_{2,4}\right]$ in $\K_4$ with angle variables eqivariant with respect to the action of $\Out^0(W_4)$. For the $\Out(W_4)$ case, the picture is the same but we can ignore the indexing on the angles.}
\label{fig:lkline0}
\end{figure}

See Figure \ref{fig:lkline0}. 

\begin{align}
\label{eq:gl0}
4 \gamma_{L^{i,j}_{k,l}} + 4 \gamma_{L^{k,l}_{i,j}}  &\geq 2 \pi \\ 
\textrm{i.e., }\gamma_{L^{i,j}_{k,l}} + \gamma_{L^{k,l}_{i,j}} &\geq \frac{\pi}{2} \nonumber
\end{align}

\begin{figure}
\hspace*{-2cm}
\begin{tikzpicture}[scale=2]
\centering

\node (14I) [circle,fill,inner sep=1.5pt,label=0:{$\left[\mathrm{id},\Omega^{1,4}\right]$}] at (0,0) {};

\node (14IL) at ($(14I)+(0.9,-0.3)$) {$= \left[x_{1,\{4\}},\Omega^{1,4}\right]$};

\node (TI) [circle,fill,inner sep=1.5pt,label=90:{$\left[\mathrm{id},\Theta^0 \right]$}] at (0, 2) {};
\node (13I) [circle,fill,inner sep=1.5pt,label=180:{$\left[\mathrm{id},\Omega^{1,3}\right] = \left[x_{1,\{3\}},\Omega^{1,3}\right]$}] at (-2,2) {};
\node (T13) [circle,fill,inner sep=1.5pt,label=0:{$\left[x_{1,\{3\}},\Theta^0 \right]$}] at (-2, 0) {};
\node (1213) [circle,fill,inner sep=1.5pt,label=-90:{$\left[x_{1,\{3\}},\Omega^{1,2}\right]$}] at (-2,-2) {};

\node (1213L) at ($(1213)+(0.5,-0.5)$) {$= \left[x_{1,\{4\}},\Omega^{1,2}\right]$};

\node (T14) [circle,fill,inner sep=1.5pt,label=-90:{$\left[x_{1,\{4\}},\Theta^0 \right]$}] at (0, -2) {};
\node (1314) [circle,fill,inner sep=1.5pt,label=-90:{$\left[x_{1,\{4\}},\Omega^{1,3}\right]$}] at (2,-2) {};

\node (1314L) at ($(1314)+(-0.3,-0.5)$) {$= \left[x_{1,\{3,4\}},\Omega^{1,3}\right]$};

\node (T134) [circle,fill,inner sep=1.5pt,label=45:{$\left[x_{1,\{3,4\}},\Theta^0 \right]$}] at (2, 0) {};
\node (12I) [circle,fill,inner sep=1.5pt,label=0:{$\left[\textrm{id},\Omega^{1,2}\right] = \left[x_{1,\{3,4\}},\Omega^{1,2}\right]$}] at (2,2) {};
\node (1413) [circle,fill,inner sep=1.5pt,label=-90:{$\left[x_{1,\{3\}},\Omega^{1,4}\right] = \left[x_{1,\{3,4\}},\Omega^{1,4}\right]$}] at (0,-4) {};

\draw [color=black] ($(14I)$) -- ($(TI)$);
\draw [color=black] ($(13I)$) -- ($(TI)$);
\draw [color=black] ($(13I)$) -- ($(T13)$);
\draw [color=black] ($(1213)$) -- ($(T13)$);
\draw [color=black] ($(1213)$) -- ($(T14)$);
\draw [color=black] ($(1314)$) -- ($(T14)$);
\draw [color=black] ($(1314)$) -- ($(T134)$);
\draw [color=black] ($(12I)$) -- ($(T134)$);
\draw [color=black] ($(12I)$) -- ($(TI)$);
\draw [color=black] ($(14I)$) -- ($(T14)$);

\draw [color=black] ($(T13)$) .. controls (-4,-2) .. ($(1413)$);
\draw [color=black] ($(T134)$) .. controls (4,-2) .. ($(1413)$);

\node [regular polygon,regular polygon sides=3,fill,color=Red,inner sep=2pt] at ($(14I)$) {};
\node [regular polygon,regular polygon sides=3,fill,color=Red,inner sep=2pt] at ($(13I)$) {};
\node [regular polygon,regular polygon sides=3,fill,color=Red,inner sep=2pt] at ($(1213)$) {};
\node [regular polygon,regular polygon sides=3,fill,color=Red,inner sep=2pt] at ($(1314)$) {};
\node [regular polygon,regular polygon sides=3,fill,color=Red,inner sep=2pt] at ($(12I)$) {};
\node [regular polygon,regular polygon sides=3,fill,color=Red,inner sep=2pt] at ($(1413)$) {};

\node [circle,fill,color=Plum,inner sep=2.6pt] at ($(TI)$) {};
\node [circle,fill,color=Plum,inner sep=2.6pt] at ($(T13)$) {};
\node [circle,fill,color=Plum,inner sep=2.6pt] at ($(T134)$) {};
\node [circle,fill,color=Plum,inner sep=2.6pt] at ($(T14)$) {};

\node at ($0.5*(14I)+ 0.5*(TI) + (0.3,0)$) {$\gamma_{\textrm{S}^{1,4}}$};
\node at ($0.5*(13I)+ 0.5*(TI) + (0,0.2)$) {$\gamma_{\textrm{S}^{1,3}}$};
\node at ($0.5*(13I)+ 0.5*(T13) + (-0.3,0)$) {$\gamma_{\textrm{S}^{1,3}}$};
\node at ($0.5*(1213)+ 0.5*(T13) + (-0.3,0)$) {$\gamma_{\textrm{S}^{1,2}}$};
\node at ($0.5*(1213)+ 0.5*(T14) + (0,0.2)$) {$\gamma_{\textrm{S}^{1,2}}$};
\node at ($0.5*(1314)+ 0.5*(T14) + (0,0.2)$) {$\gamma_{\textrm{S}^{1,3}}$};
\node at ($0.5*(1314)+ 0.5*(T134) + (0.3,0)$) {$\gamma_{\textrm{S}^{1,3}}$};
\node at ($0.5*(12I)+ 0.5*(T134) + (0.3,0)$) {$\gamma_{\textrm{S}^{1,2}}$};
\node at ($0.5*(12I)+ 0.5*(TI) + (0,0.2)$) {$\gamma_{\textrm{S}^{1,2}}$};
\node at ($0.5*(14I)+ 0.5*(T14) + (0.3,0)$) {$\gamma_{\textrm{S}^{1,4}}$};

\node at ($(-2.3,-3)$) {$\gamma_{\textrm{S}^{1,4}}$};
\node at ($(2.3,-3)$) {$\gamma_{\textrm{S}^{1,4}}$};

\end{tikzpicture}

\caption[The Link of $\textrm{S}^{1}$ with $\Out^0(W_4)$-Equivariant Angles]{Another picture of the link of $\left[\textrm{id},\textrm{S}^{1}\right]$ in $\K_4$ with angle variables eqivariant with respect to the action of $\Out^0(W_4)$. For the $\Out(W_4)$ case, the picture is the same but we can ignore the indexing on the angles.}
\label{fig:lkstar0}
\end{figure}

See Figure \ref{fig:lkstar0}.

\begin{align}
\label{eq:gs0}
2 \gamma_{S^{i,j}} + 2 \gamma_{S^{i,k}} + 2 \gamma_{S^{i,l}} &\geq 2 \pi \\ 
\textrm{i.e., }\gamma_{S^{i,j}} + \gamma_{S^{i,k}} + \gamma_{S^{i,l}} &\geq \pi \nonumber
\end{align}

\begin{figure}
\hspace*{-3.5cm}%
\begin{tikzpicture}[scale=2]
\centering

\node (14) [circle,fill,inner sep=1.5pt,label=0:{$\Omega^{1,4}$}] at (0, 1) {};
\node (13) [circle,fill,inner sep=1.5pt,label=-5:{$\Omega^{1,3}$}] at (-0.866, -0.5) {};
\node (134) [circle,fill,inner sep=1.5pt,label=225:{$\Omega^{1,2}$}] at (0.866, -0.5) {};
\node (S1) [star,fill,inner sep=2pt,label=45:{$\mathrm{S}^{1}$}] at (0, 0) {};
\draw [color=black] ($(14)$) -- ($(S1)$);
\draw [color=black] ($(13)$) -- ($(S1)$);
\draw [color=black] ($(134)$) -- ($(S1)$);

\node (32) [circle,fill,inner sep=1.5pt,label=45:{$\Omega^{3,2}$}] at (1.0607, 2.0607) {};
\node (1432) [circle,fill,inner sep=1.5pt,label=0:{$\mathrm{L}^{1,4}_{3,2}$}] at ($0.5*(14) + 0.5*(32)$) {};
\draw [color=black] ($(14)$) -- ($(32)$) -- ($(1432)$) -- ($(14)$) -- cycle;

\node (23) [circle,fill,inner sep=1.5pt,label=135:{$\Omega^{2,3}$}] at (-1.0607, 2.0607) {};
\node (1423) [circle,fill,inner sep=1.5pt,label=200:{$\mathrm{L}^{1,4}_{2,3}$}] at ($0.5*(14) + 0.5*(23)$) {};
\draw [color=black] ($(14)$) -- ($(23)$) -- ($(1423)$) -- ($(14)$) -- cycle;

\node (423) [circle,fill,inner sep=1.5pt,label=0:{$\Omega^{4,1}$}] at (0, 3.1214) {};
\node (23423) [circle,fill,inner sep=1.5pt,label=180:{$\mathrm{L}^{2,3}_{4,1}$}] at ($0.5*(23) + 0.5*(423)$) {};
\node (32423) [circle,fill,inner sep=1.5pt,label=0:{$\mathrm{L}^{3,2}_{4,1}$}] at ($0.5*(32) + 0.5*(423)$) {};
\draw [color=black] ($(423)$) -- ($(23)$) -- ($(23423)$) -- ($(423)$) -- cycle;
\draw [color=black] ($(423)$) -- ($(32)$) -- ($(32423)$) -- ($(423)$) -- cycle;

\node (24) [circle,fill,inner sep=1.5pt,label=180:{$\Omega^{2,4}$}] at (-2.315, 0.1118) {};
\node (1324) [circle,fill,inner sep=1.5pt,label=90:{$\mathrm{L}^{1,3}_{2,4}$}] at ($0.5*(13) + 0.5*(24)$) {};
\draw [color=black] ($(13)$) -- ($(24)$) -- ($(1324)$) -- ($(13)$) -- cycle;

\node (42) [circle,fill,inner sep=1.5pt,label=270:{$\Omega^{4,2}$}] at (-1.2542, -1.94889) {};
\node (1342) [circle,fill,inner sep=1.5pt,label=0:{$\mathrm{L}^{1,3}_{4,2}$}] at ($0.5*(13) + 0.5*(42)$) {};
\draw [color=black] ($(13)$) -- ($(42)$) -- ($(1342)$) -- ($(13)$) -- cycle;

\node (324) [circle,fill,inner sep=1.5pt,label=270:{$\Omega^{3,1}$}] at (-2.7032, -1.3371) {};
\node (24324) [circle,fill,inner sep=1.5pt,label=180:{$\mathrm{L}^{2,4}_{3,1}$}] at ($0.5*(24) + 0.5*(324)$) {};
\node (32442) [circle,fill,inner sep=1.5pt,label=270:{$\mathrm{L}^{3,1}_{4,2}$}] at ($0.5*(324) + 0.5*(42)$) {};
\draw [color=black] ($(324)$) -- ($(24)$) -- ($(24324)$) -- ($(324)$) -- cycle;
\draw [color=black] ($(324)$) -- ($(42)$) -- ($(32442)$) -- ($(324)$) -- cycle;

\node (34) [circle,fill,inner sep=1.5pt,label=0:{$\Omega^{3,4}$}] at (2.315, 0.1118) {};
\node (13434) [circle,fill,inner sep=1.5pt,label=90:{$\mathrm{L}^{1,2}_{3,4}$}] at ($0.5*(134) + 0.5*(34)$) {};
\draw [color=black] ($(134)$) -- ($(34)$) -- ($(13434)$) -- ($(134)$) -- cycle;

\node (43) [circle,fill,inner sep=1.5pt,label=270:{$\Omega^{4,3}$}] at (1.2542, -1.94889) {};
\node (13443) [circle,fill,inner sep=1.5pt,label=180:{$\mathrm{L}^{1,2}_{4,3}$}] at ($0.5*(134) + 0.5*(43)$) {};
\draw [color=black] ($(134)$) -- ($(43)$) -- ($(13443)$) -- ($(134)$) -- cycle;

\node (234) [circle,fill,inner sep=1.5pt,label=270:{$\Omega^{2,1}$}] at (2.7032, -1.3371) {};
\node (23434) [circle,fill,inner sep=1.5pt,label=0:{$\mathrm{L}^{2,1}_{3,4}$}] at ($0.5*(234) + 0.5*(34)$) {};
\node (23443) [circle,fill,inner sep=1.5pt,label=270:{$\mathrm{L}^{2,1}_{4,3}$}] at ($0.5*(234) + 0.5*(43)$) {};
\draw [color=black] ($(234)$) -- ($(34)$) -- ($(23434)$) -- ($(234)$) -- cycle;
\draw [color=black] ($(234)$) -- ($(43)$) -- ($(23443)$) -- ($(234)$) -- cycle;

\node (V234) at (-3.3757,2.173) {};
\node (V23) at (5.0161,-1.18829) {};
\node (V24) at (3.9885,-3.2657) {};

\node (S2) [star,fill,inner sep=2pt,label=-45:{$\mathrm{S}^{2}$}] at ($0.333*(V234) + 0.333*(23) + 0.333*(24)$) {};
\node (VS2) at ($0.333*(234) + 0.333*(V23) + 0.333*(V24)$) {};
\draw [color=black] ($(24)$) -- ($(S2)$);
\draw [color=black] ($(23)$) -- ($(S2)$);
\draw [color=black] ($0.4*(V234) + 0.6*(S2)$) -- ($(S2)$);
\draw [dashed,color=black] ($0.7*(V234) + 0.3*(S2)$) -- ($(S2)$);
\draw [color=black] ($0.6*(234) + 0.4*(VS2)$) -- ($(234)$);
\draw [dashed,color=black] ($0.3*(234) + 0.7*(VS2)$) -- ($(234)$);

\node (V324) at (3.3757,2.173) {};
\node (V32) at (-5.0161,-1.18829) {};
\node (V34) at (-3.9885,-3.2657) {};

\node (S3) [star,fill,inner sep=2pt, label=225:{$\mathrm{S}^{3}$}] at ($0.333*(V324) + 0.333*(32) + 0.333*(34)$) {};
\node (VS3) at ($0.333*(324) + 0.333*(V32) + 0.333*(V34)$) {};
\draw [color=black] ($(34)$) -- ($(S3)$);
\draw [color=black] ($(32)$) -- ($(S3)$);
\draw [color=black] ($0.4*(V324) + 0.6*(S3)$) -- ($(S3)$);
\draw [dashed,color=black] ($0.7*(V324) + 0.3*(S3)$) -- ($(S3)$);
\draw [color=black] ($0.6*(324) + 0.4*(VS3)$) -- ($(324)$);
\draw [dashed,color=black] ($0.3*(324) + 0.7*(VS3)$) -- ($(324)$);

\node (V423) at (0,-4.1212) {};
\node (V42) at (-1.2542,5.29374) {};
\node (V43) at (1.2542,5.29374) {};

\node (S4) [star,fill,inner sep=2pt,label=90:{$\mathrm{S}^{4}$}] at ($0.333*(V423) + 0.333*(43) + 0.333*(42)$) {};
\node (VS4) at ($0.333*(423) + 0.333*(V43) + 0.333*(V42)$) {};
\draw [color=black] ($(42)$) -- ($(S4)$);
\draw [color=black] ($(43)$) -- ($(S4)$);
\draw [color=black] ($0.4*(V423) + 0.6*(S4)$) -- ($(S4)$);
\draw [dashed,color=black] ($0.7*(V423) + 0.3*(S4)$) -- ($(S4)$);
\draw [color=black] ($0.6*(423) + 0.4*(VS4)$) -- ($(423)$);
\draw [dashed,color=black] ($0.3*(423) + 0.7*(VS4)$) -- ($(423)$);

\node [regular polygon,regular polygon sides=3,fill,color=Red,inner sep=2pt] at ($(14)$) {};
\node [regular polygon,regular polygon sides=3,fill,color=Red,inner sep=2pt] at ($(13)$) {};
\node [regular polygon,regular polygon sides=3,fill,color=Red,inner sep=2pt] at ($(134)$) {};
\node [regular polygon,regular polygon sides=3,fill,color=Red,inner sep=2pt] at ($(23)$) {};
\node [regular polygon,regular polygon sides=3,fill,color=Red,inner sep=2pt] at ($(24)$) {};
\node [regular polygon,regular polygon sides=3,fill,color=Red,inner sep=2pt] at ($(234)$) {};
\node [regular polygon,regular polygon sides=3,fill,color=Red,inner sep=2pt] at ($(32)$) {};
\node [regular polygon,regular polygon sides=3,fill,color=Red,inner sep=2pt] at ($(34)$) {};
\node [regular polygon,regular polygon sides=3,fill,color=Red,inner sep=2pt] at ($(324)$) {};
\node [regular polygon,regular polygon sides=3,fill,color=Red,inner sep=2pt] at ($(42)$) {};
\node [regular polygon,regular polygon sides=3,fill,color=Red,inner sep=2pt] at ($(43)$) {};
\node [regular polygon,regular polygon sides=3,fill,color=Red,inner sep=2pt] at ($(423)$) {};

\node [diamond,fill,color=ForestGreen,inner sep=2.4pt] at ($(1423)$) {};
\node [diamond,fill,color=ForestGreen,inner sep=2.4pt] at ($(1432)$) {};
\node [diamond,fill,color=ForestGreen,inner sep=2.4pt] at ($(32423)$) {};
\node [diamond,fill,color=ForestGreen,inner sep=2.4pt] at ($(23423)$) {};
\node [diamond,fill,color=ForestGreen,inner sep=2.4pt] at ($(1324)$) {};
\node [diamond,fill,color=ForestGreen,inner sep=2.4pt] at ($(24324)$) {};
\node [diamond,fill,color=ForestGreen,inner sep=2.4pt] at ($(32442)$) {};
\node [diamond,fill,color=ForestGreen,inner sep=2.4pt] at ($(1342)$) {};
\node [diamond,fill,color=ForestGreen,inner sep=2.4pt] at ($(13443)$) {};
\node [diamond,fill,color=ForestGreen,inner sep=2.4pt] at ($(23443)$) {};
\node [diamond,fill,color=ForestGreen,inner sep=2.4pt] at ($(23434)$) {};
\node [diamond,fill,color=ForestGreen,inner sep=2.4pt] at ($(13434)$) {};

\node [star,fill,color=blue!80!white,inner sep=2.2pt] at ($(S1)$) {};
\node [star,fill,color=blue!80!white,inner sep=2.2pt] at ($(S2)$) {};
\node [star,fill,color=blue!80!white,inner sep=2.2pt] at ($(S3)$) {};
\node [star,fill,color=blue!80!white,inner sep=2.2pt] at ($(S4)$) {};

\node at ($0.5*(S1)+ 0.5*(13) + (0.1,-0.1)$) {$\alpha_{S^1}$};
\node at ($0.5*(S1)+ 0.5*(14) + (0.2,0)$) {$\alpha_{S^1}$};
\node at ($0.5*(S1)+ 0.5*(134) + (-0.1,-0.1)$) {$\alpha_{S^1}$};

\node at ($0.5*(S2)+ 0.5*(23) + (-0.1,0.1)$) {$\alpha_{S^2}$};
\node at ($0.5*(S2)+ 0.5*(24) + (-0.2,0)$) {$\alpha_{S^2}$};
\node at ($0.5*(S2)+ 0.5*(V234) + (-0.1,-0.1)$) {$\alpha_{S^2}$};
\node at ($0.5*(VS2)+ 0.5*(234) + (-0.1,-0.1)$) {$\alpha_{S^2}$};

\node at ($0.5*(S3)+ 0.5*(32) + (0.1,0.1)$) {$\alpha_{S^3}$};
\node at ($0.5*(S3)+ 0.5*(34) + (0.2,0)$) {$\alpha_{S^3}$};
\node at ($0.5*(S3)+ 0.5*(V324) + (-0.1,0.1)$) {$\alpha_{S^3}$};
\node at ($0.5*(VS3)+ 0.5*(324) + (-0.1,0.1)$) {$\alpha_{S^3}$};

\node at ($0.5*(S4)+ 0.5*(42) + (0.1,0.1)$) {$\alpha_{S^4}$};
\node at ($0.5*(S4)+ 0.5*(43) + (-0.1,0.1)$) {$\alpha_{S^4}$};
\node at ($0.5*(S4)+ 0.5*(V423) + (-0.2,0)$) {$\alpha_{S^4}$};
\node at ($0.5*(VS4)+ 0.5*(423) + (-0.2,0)$) {$\alpha_{S^4}$};

\node at ($0.5*(14)+ 0.5*(1423) + (0.1,0.1)$) {$\alpha_{L^{1,4}_{2,3}}$};
\node at ($0.5*(23)+ 0.5*(1423) + (0.1,0.1)$) {$\alpha_{L^{1,4}_{2,3}}$};
\node at ($0.5*(14)+ 0.5*(1432) + (0.3,-0.1)$) {$\alpha_{L^{1,4}_{3,2}}$};
\node at ($0.5*(32)+ 0.5*(1432) + (0.3,-0.1)$) {$\alpha_{L^{1,4}_{3,2}}$};
\node at ($0.5*(23)+ 0.5*(23423) + (0.25,-0.1)$) {$\alpha_{L^{2,3}_{4,1}}$};
\node at ($0.5*(423)+ 0.5*(23423) + (0.2,-0.15)$) {$\alpha_{L^{2,3}_{4,1}}$};
\node at ($0.5*(423)+ 0.5*(32423) + (0.2,0)$) {$\alpha_{L^{3,2}_{4,1}}$};
\node at ($0.5*(32)+ 0.5*(32423) + (-0.2,-0.15)$) {$\alpha_{L^{3,2}_{4,1}}$};

\node at ($0.5*(24)+ 0.5*(1324) + (-0.05,0.15)$) {$\alpha_{L^{1,3}_{2,4}}$};
\node at ($0.5*(13)+ 0.5*(1324) + (0.1,0.1)$) {$\alpha_{L^{1,3}_{2,4}}$};
\node at ($0.5*(13)+ 0.5*(1342) + (-0.25,0)$) {$\alpha_{L^{1,3}_{4,2}}$};
\node at ($0.5*(42)+ 0.5*(1342) + (0.25,0)$) {$\alpha_{L^{1,3}_{4,2}}$};
\node at ($0.5*(42)+ 0.5*(32442) + (0.1,0.15)$) {$\alpha_{L^{3,1}_{4,2}}$};
\node at ($0.5*(324)+ 0.5*(32442) + (0.1,0.15)$) {$\alpha_{L^{3,1}_{4,2}}$};
\node at ($0.5*(324)+ 0.5*(24324) + (-0.25,-0.1)$) {$\alpha_{L^{2,4}_{3,1}}$};
\node at ($0.5*(24)+ 0.5*(24324) + (0.25,-0.1)$) {$\alpha_{L^{2,4}_{3,1}}$};

\node at ($0.5*(34)+ 0.5*(13434) + (0.05,0.2)$) {$\alpha_{L^{1,2}_{3,4}}$};
\node at ($0.5*(134)+ 0.5*(13434) + (-0.1,0.15)$) {$\alpha_{L^{1,2}_{3,4}}$};
\node at ($0.5*(134)+ 0.5*(13443) + (0.25,0)$) {$\alpha_{L^{1,2}_{4,3}}$};
\node at ($0.5*(43)+ 0.5*(13443) + (-0.25,-0.1)$) {$\alpha_{L^{1,2}_{4,3}}$};
\node at ($0.5*(43)+ 0.5*(23443) + (-0.1,0.15)$) {$\alpha_{L^{2,1}_{4,3}}$};
\node at ($0.5*(234)+ 0.5*(23443) + (-0.1,0.15)$) {$\alpha_{L^{2,1}_{4,3}}$};
\node at ($0.5*(234)+ 0.5*(23434) + (0.25,-0.1)$) {$\alpha_{L^{2,1}_{3,4}}$};
\node at ($0.5*(34)+ 0.5*(23434) + (-0.2,-0.1)$) {$\alpha_{L^{2,1}_{3,4}}$};

\end{tikzpicture}

\caption[The Link of $\Theta^0$ with $\Out^0(W_4)$-Equivariant Angles]{Another picture of the link of $\left[\textrm{id},\Theta^0\right]$ in $\K_4$ with angle variables eqivariant with respect to the action of $\Out^0(W_4)$. For the $\Out(W_4)$ case, the picture is the same but we can ignore the indexing on the angles.}
\label{fig:lknuc0}
\end{figure}

See Figure \ref{fig:lknuc0}. 

\begin{align}
\label{eq:al0}
&\alpha_{L^{i,j}_{k,l}} + \alpha_{L^{k,l}_{i,j}} + \alpha_{L^{k,l}_{j,i}} + \alpha_{L^{j,i}_{k,l}} \\ &\quad + \alpha_{L^{j,i}_{l,k}} + \alpha_{L^{l,k}_{j,i}} + \alpha_{L^{l,k}_{i,j}} + \alpha_{L^{i,j}_{l,k}} \geq 2 \pi \nonumber
\end{align}

\begin{align}
\label{eq:alas0}
&\alpha_{L^{i,j}_{k,l}} + \alpha_{L^{k,l}_{i,j}} + \alpha_{S^{k,l}} + \alpha_{S^{k,j}} \\ &\quad + \alpha_{L^{k,j}_{i,l}} + \alpha_{L^{i,l}_{k,j}} + \alpha_{S^{i,l}} + \alpha_{S^{i,j}} \geq 2 \pi \nonumber
\end{align}

On its own, the system of inequalities \eqref{eq:TL0} - \eqref{eq:alas0} is too complicated to try to solve by hand. However, we can exploit an additional unused symmetry, not of the metric space $K_4$, but of the inequalities themselves, namely that the system is invariant under the action of $\Sigma_4$ that permutes the labels in the subscripts. In fact, we have been implicitly using this symmetry to avoid the explicit quantification over $i,j,k,l \in [4]$ in the different classes of inequalities. 

So now we explicitly note that $\Sigma_4$ acts on the set of 108 angles given in Definition \ref{def:ang0} as follows. For any $\sigma \in \Sigma_4$:
\begin{align*}
\sigma \cdot \alpha_{L^{i,j}_{k,l}} = \alpha_{L^{\sigma(i),\sigma(j)}_{\sigma(k),\sigma(l)}} \quad 
\sigma \cdot \beta_{L^{i,j}_{k,l}} = \beta_{L^{\sigma(i),\sigma(j)}_{\sigma(k),\sigma(l)}} \quad
\sigma \cdot \gamma_{L^{i,j}_{k,l}} = \gamma_{L^{\sigma(i),\sigma(j)}_{\sigma(k),\sigma(l)}} \\
\sigma \cdot \alpha_{S^{i,j}} = \alpha_{S^{\sigma(i),\sigma(j)}} \quad
\sigma \cdot \beta_{S^{i,j}} = \beta_{S^{\sigma(i),\sigma(j)}} \quad 
\sigma \cdot \gamma_{S^{i,j}} = \gamma_{S^{\sigma(i),\sigma(j)}} 
\end{align*}

\begin{definition}
\label{def:ang0d}
Given the angles defined in Definition \ref{def:ang0}, we define the following six \emph{average angles}.
\begin{align*}
\alpha_L := \tfrac{1}{24} \sum_{\sigma \in \Sigma_4} \sigma \cdot \alpha_{L^{1,2}_{3,4}} \quad \alpha_S := \tfrac{1}{24}  \sum_{\sigma \in \Sigma_4} \sigma \cdot \alpha_{S^{1,2}} \\ 
\beta_L := \tfrac{1}{24}  \sum_{\sigma \in \Sigma_4} \sigma \cdot \beta_{L^{1,2}_{3,4}} \quad \beta_S := \tfrac{1}{24} \sum_{\sigma \in \Sigma_4} \sigma \cdot \beta_{S^{1,2}} \\ 
\gamma_L := \tfrac{1}{24} \sum_{\sigma \in \Sigma_4} \sigma \cdot \gamma_{L^{1,2}_{3,4}} \quad \gamma_S := \tfrac{1}{24} \sum_{\sigma \in \Sigma_4} \sigma \cdot \gamma_{S^{1,2}}
\end{align*}
\end{definition}

Note that some angles might appear more than once in these sums as $\Sigma_4$ does not act freely on the set of angles. Also, notice that since Theorem \ref{thm:brid} implies that each angle from Definition \ref{def:ang0} is in ${(0,\pi]}$, then the new average angles in Definition \ref{def:ang0d} are also in ${(0,\pi]}$, since $|\Sigma_4| = 24$. 

We can now prove Theorem \ref{thm:noout0}:

\begin{proof}[Proof of Theorem \ref{thm:noout0}]
For each class of Inequalities \eqref{eq:TL0} - \eqref{eq:alas0}, we take one instance of the inequality for each of the 24 possible assignments of distinct $i,j,k,l$ from $[4] = \{1,2,3,4\}$, and then add the instances together. 

For instance, consider a particular instance of Inequality \eqref{eq:TL0}:
\begin{align*}
\alpha_{L^{1,2}_{3,4}} + \beta_{L^{1,2}_{3,4}} + \gamma_{L^{1,2}_{3,4}} \leq \pi
\end{align*}
Each of the other 23 instances of this inequality is obtained by permuting the labels, i.e., by acting on each variable in the inequality by $\sigma \in \Sigma_4$. When we add the 24 instances of this inequality together, we get
\begin{align*}
\sum_{\sigma \in \Sigma_4} \sigma \cdot \left( \alpha_{L^{1,2}_{3,4}} +  \beta_{L^{1,2}_{3,4}} +  \gamma_{L^{1,2}_{3,4}} \right) &\leq \sum_{\sigma \in \Sigma_4} \pi \\
\implies \sum_{\sigma \in \Sigma_4} \sigma \cdot \alpha_{L^{1,2}_{3,4}} + \sum_{\sigma \in \Sigma_4} \sigma \cdot \beta_{L^{1,2}_{3,4}} + \sum_{\sigma \in \Sigma_4} \sigma \cdot \gamma_{L^{1,2}_{3,4}} &\leq \sum_{\sigma \in \Sigma_4} \pi \\
\implies 24 \alpha_{L} + 24 \beta_{L} + 24 \gamma_{L} &\leq 24 \pi \\
\implies \alpha_{L} + \beta_{L} + \gamma_{L} &\leq \pi,
\end{align*}
i.e., we recover Inequality \eqref{eq:TL}. 

In fact, this is general. For each class of inequalities \eqref{eq:TL0} - \eqref{eq:alas0}, adding together all 24 instances of them indexed by the action of $\Sigma_4$ and then dividing by 24 implies the Inequalities \eqref{eq:TL} - \eqref{eq:alas} in the six average angles variables $\{\alpha_L,\beta_L,\gamma_L,\alpha_S,\beta_S,\gamma_S \}$. 

So assuming the existence of an $\Out^0(W_4)$-equivariant metric on the $M_{\kappa}$-simplicial complex $K_4$ (for $\kappa \leq 0$) allowed us to derive six real numbers $\alpha_L,\beta_L,\gamma_L,\alpha_S,\beta_S,\gamma_S \in {(0,\pi]} $ that simultaneously satisfy Inequalities \eqref{eq:TL} - \eqref{eq:alas}. But the proof of Theorem \ref{thm:noout} shows that no such six numbers exist. This completes the proof. 

\end{proof}

Note that these results do not immediately extend to $\K_n$ for $n \geq 5$ since there is no analogue of Theorem \ref{thm:brid} for higher dimensional $M_{\kappa}$-polyhedral complexes. So the analogous theorem to Theorem \ref{thm:noout} for $n \geq 5$ needs a different approach.

\subsection{The $\Out^0(W_n)$ and $\Out(W_n)$ Case}

To extend the results of this section to a general $n \geq 5$, we first notice that $\K_n$ has $\K_4$ as a full subcomplex which is left invariant by $\Out^0(W_4)$ sitting as a subgroup in $\Out^0(W_n)$.  We then wish to prove the following theorem. 

\begin{theorem}
\label{thm:noout0n}
There does not exist an $\Out^0(W_n)$-equivariant (or $\Out(W_n)$-equivariant) piecewise Euclidean (or piecewise hyperbolic) $\cat(0)$ $\LP\cat(-1)\RP$ metric on $\K_n$ for $n \geq 4$. 
\end{theorem}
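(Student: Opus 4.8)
The plan is to reduce the general statement to the case $n=4$, which is already settled: Theorems \ref{thm:noout} and \ref{thm:noout0} rule out an $\Out(W_4)$- or $\Out^0(W_4)$-equivariant piecewise Euclidean $\cat(0)$ (or piecewise hyperbolic $\cat(-1)$) metric on $\K_4$. For $n\geq 5$ I would work with the full subcomplex $\K_4\hookrightarrow\K_n$ noted above: realize $\Out^0(W_4)\leq\Out^0(W_n)$ (and $\Out(W_4)\leq\Out(W_n)$) as the automorphisms supported on the free factor $\langle a_1,\dots,a_4\rangle$, i.e. generated by the $x_{i,D}$ with $i\in[4]$, $D\subseteq[4]\setminus\{i\}$, extended to fix $a_5,\dots,a_n$; and on the hypertree side send a hypertree $\Theta$ on $[4]$ to the hypertree on $[n]$ obtained by attaching $5,\dots,n$ as pendants to a fixed hyperedge slot of $\Theta$. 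This produces an injective, $\Out^0(W_4)$-equivariant (indeed $\Out(W_4)$-equivariant) simplicial embedding of $\K_4$ onto a full subcomplex of $\K_n$, which I continue to call $\K_4$.

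Now suppose $\K_n$ carries an $\Out^0(W_n)$-equivariant (resp. $\Out(W_n)$-equivariant) piecewise Euclidean $\cat(0)$ — or piecewise hyperbolic $\cat(-1)$ — $M_\kappa$ metric with $\kappa\leq 0$. Restricting the $M_\kappa$-polyhedral structure to $\K_4$ gives an $\Out^0(W_4)$-equivariant (resp. $\Out(W_4)$-equivariant) $M_\kappa$-polyhedral structure on $\K_4$, and I claim this structure satisfies all of Inequalities \eqref{eq:TL0}--\eqref{eq:alas0} (and their $\Out(W_n)$-counterparts \eqref{eq:TL}--\eqref{eq:alas}). The triangle inequalities \eqref{eq:TL0} and \eqref{eq:TS0} are immediate, since every $2$-simplex of $\K_4$ is an $M_\kappa$-triangle. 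For the link inequalities, note that for each vertex $v$ of $\K_4$ the loops used in Subsection \ref{subsec:out0} are embedded circuits in the graph $\Lk(v,\K_4)$, a full subcomplex of the $M_1$-complex $\Lk(v,\K_n)$; since $\K_n$ is $\cat(\kappa)$ with $\kappa\le 0$, the link $\Lk(v,\K_n)$ is $\cat(1)$ (a general property of $\cat(\kappa)$ spaces; see \cite{BH}). It then suffices to check that each such circuit is a local geodesic in $\Lk(v,\K_n)$, for then it is a closed geodesic and hence has length at least $2\pi$ in the $\cat(1)$ space $\Lk(v,\K_n)$. Being locally geodesic at a vertex $w$ of the circuit means the two consecutive $\K_4$-triangles through the edge $\overline{vw}$ subtend angular distance at least $\pi$ in the edge-link $\Lk(\overline{vw},\K_n)$, which holds whenever those two triangles are not connected by a chain of higher simplices of $\K_n$ containing $\overline{vw}$ — a metric-independent condition. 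Granting this, the six average-angle variables formed exactly as in the proof of Theorem \ref{thm:noout0} must satisfy Inequalities \eqref{eq:TL}--\eqref{eq:alas}, which by the proof of Theorem \ref{thm:noout} is impossible; the piecewise-hyperbolic case is identical with $M_{-1}$-triangles in place of $M_0$-triangles.

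The main obstacle is precisely the local-convexity check of the previous paragraph: verifying that the $1$-dimensional subcomplex $\Lk(v,\K_4)$ is locally convex in the higher-dimensional $\cat(1)$ complex $\Lk(v,\K_n)$ for every vertex $v$ of $\K_4$. This reduces to the combinatorial assertion that the pairs of hypertrees on $[4]$ meeting consecutively along these loops (for instance $L^{i,j}_{k,l}$ and $S^{i}$, or $\Omega^{i,j}$ and $\Omega^{k,l}$, or $L^{i,j}_{k,l}$ and $L^{i,j}_{l,k}$) remain pairwise incomparable with no common upper bound in $\HTc_n$ after the pendants $5,\dots,n$ are attached, so that no simplex of $\K_n$ has both triangles as faces. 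A cleaner but less elementary alternative for the $\Out(W_n)$-equivariant case would be to identify $\K_4$ with (a component of) the fixed-point set in $\K_n$ of the finite subgroup $\Sigma_{\{5,\dots,n\}}\leq\Sigma_n\leq\Out(W_n)$: fixed-point sets of groups of isometries of a $\cat(0)$ space are convex, hence $\cat(\kappa)$ in the induced metric, and $\Out(W_4)$ centralizes $\Sigma_{\{5,\dots,n\}}$ and so preserves this set, after which Theorem \ref{thm:noout} applies directly. Either route yields the same contradiction, and combining it with Theorems \ref{thm:noout} and \ref{thm:noout0} for $n=4$ gives the statement for all $n\geq 4$.
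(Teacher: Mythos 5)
Your overall reduction strategy (embed $\K_4$ in $\K_n$ and exploit $\cat(0)$ convexity) is the right idea and matches the paper, but both of the routes you sketch have genuine gaps, and the paper's choice of fixed-point group is the key ingredient that you are missing.

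Your main route asserts that the required link inequalities persist because the loops in $\Lk(v,\K_4)$ are closed local geodesics in $\Lk(v,\K_n)$, and you claim the local-geodesic condition at a vertex $w$ of the loop is a ``metric-independent condition'' reducing to whether the two $\K_4$-triangles through $\overline{vw}$ sit in a common higher simplex of $\K_n$. This is false once $n\geq 5$: then $\dim\K_n = n-2\geq 3$, so $\Lk(\overline{vw},\K_n)$ is a simplicial complex of positive dimension (not a discrete set of points), and the angular distance between the two vertices corresponding to your $\K_4$-triangles depends on the chosen $M_\kappa$-metric on $\K_n$ — even if they span no common higher simplex, a short path through \emph{other} triangles on $\overline{vw}$ could give a distance below $\pi$. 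So you have not established that $\Lk(v,\K_4)$ is locally convex in $\Lk(v,\K_n)$, and the inequalities \eqref{eq:TL0}--\eqref{eq:alas0} do not follow from what you wrote. Your alternative route via the fixed-point set of $\Sigma_{\{5,\dots,n\}}$ has three problems: (i) it is trivially empty content when $n=5$, since $\Sigma_{\{5\}}=\{1\}$ and the fixed-point set is all of $\K_5$; (ii) $\Sigma_{\{5,\dots,n\}}\not\leq\Out^0(W_n)$, so this says nothing about the $\Out^0(W_n)$-equivariant case, which the theorem also asserts; and (iii) your hedge ``(a component of)'' does not rescue it, because fixed-point sets of finite groups of isometries of a complete $\cat(0)$ space are \emph{convex}, hence connected, so if $\K_4$ were a proper component you would already have a contradiction, and if it were a proper subset of the connected fixed-point set then $\K_4$ itself would not inherit convexity.

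The paper instead takes $G = \langle x_{1,\{f\}}\mid f\in F\rangle\cong\mathbb{Z}_2^{n-4}$, a finite subgroup of $\Out^0(W_n)$ (not just $\Out(W_n)$), and in Theorem \ref{thm:fix} proves by a direct combinatorial argument that the fixed-point set of $G$ in $\K_n$ is \emph{exactly} the embedded copy of $\K_4$, invariant under $\Out^0(W_4)$. This choice simultaneously resolves all three of the difficulties above: $G$ is nontrivial for every $n\geq 5$, it lies in $\Out^0(W_n)$ so both equivariance cases are covered, and the fixed-point set is identified precisely rather than up to a component, so the Bruhat--Tits/Cartan fixed-point convexity (Corollary II.2.8 in \cite{BH}) applies directly to give $\K_4$ the induced $\cat(\kappa)$ metric, contradicting Theorem \ref{thm:noout0}. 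Your write-up identifies the $\cat(0)$-convexity mechanism but does not supply the group $G$ or the proof (Theorem \ref{thm:fix}) that its fixed-point set is $\K_4$, and that is where the real work of the $n\geq 5$ case lies.
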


Note that Theorem \ref{thm:noout0n} suffices for both the $\Out^0(W_n)$ as well as the $\Out(W_n)$ case, since if there were an $\Out(W_n)$-equivariant piecewise Euclidean (or piecewise hyperbolic) $\cat(0)$ $\LP\cat(-1)\RP$ metric on $\K_n$, then it would be $\Out^0(W_n)$-equivariant as well. 


\begin{definition}
\label{def:killn}
Consider the subset $F = \{5,6,\dotsc,n\} \subset [n]$. Denote the partial conjugation generators of $\Out^0(W_n)$ by the usual $x_{i,D}$, and let the partial conjugation generators of $\Out^0(W_4)$ now be denoted as $y_{i,D}$. 

Let $\varphi_{5^{+}} : \Out^0(W_n) \to \Out^0(W_4)$ be defined as 
\[
\varphi_{5^{+}}(x_{i,D}) := 
\begin{cases}
\textrm{id} &\mbox{if } i \geq 5 \textrm{, } D \subset F \textrm{, or } D^c \subset F \\
y_{i,D\setminus F} &\mbox{ otherwise.}
\end{cases}
\]
\end{definition}

\begin{remark}
By checking that each of the relation families (R1), (R2), and (R3) are preserved under the operations of either removing $F$ from $D$ or by sending certain generators to the identity, we can see that each map $\varphi_{5^{+}}$ is a surjective homomorphism onto $\Out^0(W_4)$. 

Furthermore, consider the map: $\psi_{5^{+}} : \Out^0(W_4) \to \Out^0(W_n)$ which is defined as 
$\psi_{5^{+}}(y_{i,D}) := x_{i,D}$. For each $y_{i,D}$, $x_{i,D} = \psi_{5^{+}}(y_{i,D})$ trivially satifies relation families (R1) and (R2), and since $F \subset D^c$ for all images of the map, the disjointness conditions in (R3) remain satisfied as well. (It's critically important here that none of the three disjointness conditions is $\widetilde{D_i^c} \cap \widetilde{D_j^c} = \varnothing$).  Thus, $\psi_{5^{+}}$ is a section of $\varphi_{5^{+}}$, and so $\Out^0(W_n)$ splits as a semidirect product. In particular, it contains $\Out^0(W_4)$ as a subgroup, which by abuse of notation we also denote by $\Out^0(W_4)$.
\end{remark}

Now, we embed $\HTc_4$ into $\HTc_n$. 

\begin{definition}
Let $\Theta \in \HTc_4$ be a hypertree. Then to $\Theta$, associate a hypertree $\widetilde{\Theta} \in \HTc_n$, which is defined to be the hypertree on $[n]$ with the same hyperedges as $\Theta$ as well as the additional hyperedges $\{\{1,f\} \mid f \in F\} = \{\{1,5\},\{1,6\},\dotsc,\{1,n\}\}\}$, i.e., put each remaining vertex in a hyperedge with the vertex $1$. Denote the subset of $\HTc_n$ given by all such $\widetilde{\Theta}$ as $\widetilde{\HTc_4}$.
\end{definition}

\begin{remark}
By adding or removing these hyperedges, we see that there is a bijection between $\HTc_4$ and $\widetilde{\HTc_4}$, this bijection respects folding, and so it is order-preserving from $(\HTc_4, \leq)$ to $(\HTc_n, \leq)$. Thus, it is also a simplicial automorphism from $\HT_4$ into $\HT_n$. 

In order to see how this subcomplex sits in $\K_n$, we need to see which partial conjugations are carried by each hypertree. For each $\widetilde{\Theta} \in \widetilde{\HTc_4}$, if $y_{i,D}$ is carried by $\Theta$, then $x_{i,D}$ is carried by $\widetilde{\Theta}$, since for $i \neq 1$, $1 \in D^c$, and for $i=1$, $F$ is its own union of  connected components of $\widetilde{\Theta} \setminus \{i\}$. This also shows that $\widetilde{\Theta}$ carries $x_{1,F'}$ for any $F' \subseteq F$, which thus commutes with all the other carried partial conjugations by Theorem \ref{thm:comhyp}. If $\Theta$ is at height $h$ and so has $2^h$ carried automorphisms, then $\widetilde{\Theta}$, with its $n-4$ additional hyperedges, is at height $h+n-4$, and so the $2^{h+n-4}$ automorphisms given by $\{x_{i,D} x_{1,F'} \mid x_{i,D} \in \Out^0(W_4) \textrm{ carried by } \Theta, F' \subseteq F\}$ exhaust all the automorphisms carried by $\widetilde{\Theta}$. 
\end{remark}

Next, we embed $\K_4$ into $\K_n$. Consider the subgroup $G = \langle x_{1,\{f\}} \mid f \in F \rangle \subset \Out^0(W_n)$, which is a product of $n-4$ commuting non-conjugate involutions, and so is isomorphic to $\mathbb{Z}_2^{n-4}$. $G$ is thus a finite group acting on $\K_n$ by simplicial automorphisms. 

\begin{theorem}
\label{thm:fix}
The fixed point set of $G$ in $\K_n$ is the set of simplices spanned by $[\alpha, \widetilde{\Theta}]$, where $\alpha \in \Out^0(W_4)$ and $\widetilde{\Theta} \in \widetilde{\HTc_4}$. This set is simplicially isomorphic with $\K_4$.   
\end{theorem}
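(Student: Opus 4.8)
The plan is to analyze the fixed point set of $G = \langle x_{1,\{f\}} \mid f \in F\rangle \cong \mathbb{Z}_2^{n-4}$ on $\K_n$ vertex-by-vertex, then simplex-by-simplex, using the description of $\K_n$ as partially glued copies of $\HT_n$ and the combinatorial characterization of the $\Out(W_n)$-action. Since $G$ acts by simplicial automorphisms on the (flag, or at least order-complex) simplicial complex $\K_n$, a simplex is fixed setwise iff each of its vertices is fixed; and because $G$ acts on the poset $\Kc_n$ by order automorphisms, the fixed set is itself the order complex of the fixed subposet. So the crux is to identify exactly which vertices $[\alpha,\Theta]$ of $\K_n$ are fixed by every generator $x_{1,\{f\}}$.

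First I would recall that $x_{1,\{f\}} \in \Out^0(W_n)$, so it acts on $\Kc_n$ by $x_{1,\{f\}} \cdot [\alpha,\Theta] = [x_{1,\{f\}}\,\alpha,\,\Theta]$ (the $\Sigma_n$-part is trivial, so $\Theta$ is unchanged). Thus $[\alpha,\Theta]$ is fixed by $x_{1,\{f\}}$ iff $[x_{1,\{f\}}\alpha,\Theta] = [\alpha,\Theta]$, i.e. iff $\alpha^{-1} x_{1,\{f\}}\alpha$ is carried by $\Theta$ — but since $\Out^0(W_n)$ is not abelian I should instead use the definition directly: $(\alpha,\Theta)\sim(x_{1,\{f\}}\alpha,\Theta)$ iff $\alpha^{-1}(x_{1,\{f\}}\alpha)$ is carried by $\Theta$. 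Here I'll want to be a little careful and I expect it's cleanest to first reduce to the case $\alpha \in \Out^0(W_4)$: if $[\alpha,\Theta]$ is fixed by all of $G$, then $[\alpha,\Theta] = [\alpha',\Theta]$ for some $\alpha'$, and I can adjust $\alpha$ within its $\sim$-class. The key computation is: $[\alpha,\Theta]$ is $G$-fixed iff every $x_{1,\{f\}}$, $f \in F$, is carried by $\Theta$ (after translating by $\alpha$), which by the carrying criterion forces $\{f\}$ (equivalently $F' \subseteq F$) to be unions of connected components of $\Theta \setminus \{1\}$ — precisely the structure that the hypertrees $\widetilde{\Theta}$ have: each $f \in F$ sits in its own hyperedge $\{1,f\}$, hence is its own component of $\widetilde{\Theta}\setminus\{1\}$. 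Conversely, a hypertree $\Theta$ on $[n]$ carrying all $x_{1,\{f\}}$, $f\in F$, must have each $f$ separated from the rest by $1$, and a short argument (using that $\Theta$ is a hypertree, so the simple walk from $f$ to anything not in $F$ is unique and must pass through $1$) shows $\{1,f\} \in E_\Theta$ for each $f$, with the remaining hyperedges forming a hypertree on $[4]$; that is $\Theta \in \widetilde{\HTc_4}$. Combined with the earlier remark that $\widetilde{\HTc_4}$ carries exactly $\{x_{i,D}x_{1,F'} \mid x_{i,D}\text{ carried by }\Theta,\ F'\subseteq F\}$, this pins down the fixed vertex set as $\{[\alpha,\widetilde{\Theta}] \mid \alpha\in\Out^0(W_4),\ \widetilde{\Theta}\in\widetilde{\HTc_4}\}$, where $\alpha$ ranges over coset representatives.

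Next I would check that this vertex set, with the induced partial order from $\Kc_n$, is poset-isomorphic to $\Kc_4$. The map should send $[\alpha,\widetilde{\Theta}]_{\K_n} \mapsto [\alpha,\Theta]_{\K_4}$ for $\alpha \in \Out^0(W_4)$. Well-definedness and injectivity follow from comparing the $\sim$-relations: $(\alpha,\widetilde\Theta)\sim(\beta,\widetilde\Theta)$ in $\Kc_n$ iff $\alpha^{-1}\beta$ is carried by $\widetilde\Theta$; by the previous paragraph's description of what $\widetilde\Theta$ carries, for $\alpha,\beta \in \Out^0(W_4) \le \Out^0(W_n)$ this happens iff $\alpha^{-1}\beta \in \Out^0(W_4)$ is carried by $\Theta$, i.e. iff $(\alpha,\Theta)\sim(\beta,\Theta)$ in $\Kc_4$ — here I use that the splitting $\psi_{5^+}$ embeds $\Out^0(W_4)$ so that $\alpha^{-1}\beta$ has domains disjoint from $F$, so no "$F'$" part can appear. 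Surjectivity onto vertices of $\K_4$ is clear. For the order relation: $[\alpha,\widetilde\Theta] \le [\beta,\widetilde\Lambda]$ in $\Kc_n$ iff $\widetilde\Theta \le \widetilde\Lambda$ and $\alpha^{-1}\beta$ carried by $\widetilde\Lambda$; since the bijection $\Theta\mapsto\widetilde\Theta$ respects folding (the earlier remark), and by the $\sim$-comparison above, this is equivalent to $\Theta\le\Lambda$ and $\alpha^{-1}\beta$ carried by $\Lambda$, i.e. $[\alpha,\Theta]\le[\beta,\Lambda]$ in $\Kc_4$. Taking order complexes, the fixed set is simplicially isomorphic to $\K_4$, and it is a full subcomplex since the order complex of a subposet closed under the relevant comparisons is full.

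**The main obstacle** I anticipate is the bookkeeping around which $\alpha$'s represent the fixed vertices and making the reduction "$[\alpha,\Theta]$ $G$-fixed $\Rightarrow$ can take $\alpha\in\Out^0(W_4)$ and $\Theta\in\widetilde{\HTc_4}$" fully rigorous — in particular showing that a $G$-fixed vertex forces $\Theta$ itself (not merely some $\Theta$ in its $\Sigma_n$-orbit) to lie in $\widetilde{\HTc_4}$, and handling the non-abelianity of $\Out^0(W_n)$ when manipulating $\alpha^{-1}x_{1,\{f\}}\alpha$. The cleanest route is probably: (1) show $x_{1,\{f\}}$ fixes $[\alpha,\Theta]$ iff $\alpha^{-1}x_{1,\{f\}}\alpha$ is carried by $\Theta$; (2) note carrying is a property of a conjugacy-invariant-ish nature only up to the subtleties already discussed, so instead argue that fixing by all of $G$ means the coset $\alpha\,\mathrm{Stab}(\Theta)$ is $G$-invariant, forcing $G \subseteq \alpha\,\mathrm{Stab}_{\Out^0(W_n)}(\Theta)\,\alpha^{-1}$ up to the carried subgroup; then (3) invoke Corollary~\ref{cor:comhyp} and Theorem~\ref{thm:comhyp} to conclude the carried subgroup of $\Theta$ contains a conjugate of $G$, which (since $G$ is generated by the specific $x_{1,\{f\}}$) forces $\Theta$, after translating $\alpha$ back into $\Out^0(W_4)$, to be in $\widetilde{\HTc_4}$. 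I'd want to write this carefully but expect no genuine difficulty beyond the careful tracking of representatives.
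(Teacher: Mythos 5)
Your overall outline matches the paper's (reduce to $G$-fixed vertices, pin down $\Theta$ and $\alpha$, then build the poset isomorphism), but there is a genuine gap at the step you yourself flag as ``the main obstacle,'' and your proposed way around it does not close it.

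You correctly reduce to: $[\alpha,\Theta]$ is $G$-fixed iff $\alpha^{-1}x_{1,\{f\}}\alpha$ is carried by $\Theta$ for every $f\in F$. You then slide to ``every $x_{1,\{f\}}$ is carried by $\Theta$ (after translating by $\alpha$),'' and immediately invoke the carrying criterion to force $\{1,f\}\in E_\Theta$. But the carrying criterion applies to $x_{1,\{f\}}$ itself, not to its conjugate. The step you are missing is to show that $\alpha^{-1}x_{1,\{f\}}\alpha = x_{1,\{f\}}$, i.e., that $\alpha$ centralizes $G$. The paper does this by observing that the carried subgroup of $\Theta$ is a product of pairwise commuting elementary partial conjugations, and that such products inject into the abelianization of $\Out^0(W_n)$; since $\alpha^{-1}x_{1,F'}\alpha$ and $x_{1,F'}$ have the same image in abelianization, and the former is carried, the structure of the abelianization forces equality. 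Your alternative route (2)--(3) --- ``the carried subgroup of $\Theta$ contains a conjugate of $G$, which forces $\Theta$ to be in $\widetilde{\HTc_4}$ after translating $\alpha$'' --- does not work as stated: knowing only that $\alpha^{-1}G\alpha \subseteq C(\Theta)$ tells you that $C(\Theta)$ contains \emph{some} $\mathbb{Z}_2^{n-4}$ generated by products of elementary partial conjugations carried by $\Theta$, but not that those are the specific $x_{1,\{f\}}$, and hence says nothing a priori about the hyperedges of $\Theta$. Corollary~\ref{cor:comhyp} concerns unfoldings, not containment of a conjugate subgroup, and does not rescue the step.

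Separately, once one knows $\alpha$ centralizes $G$, it still takes an argument (the paper does induction on word length using Lemma~\ref{lem:compc}) to conclude $\alpha\in\Out^0(W_4)\times G$; you defer this to ``translating $\alpha$ back into $\Out^0(W_4)$'' without explaining why such a representative exists. Your worry about $\Theta$ ``versus some $\Theta$ in its $\Sigma_n$-orbit'' is misplaced, since $G\le\Out^0(W_n)$ acts with trivial $\Sigma_n$-part and never moves the hypertree coordinate. The final part of your argument --- the poset isomorphism $[\alpha,\widetilde{\Theta}]\mapsto[\alpha,\Theta]$, well-definedness via comparing what $\widetilde{\Theta}$ carries, and using the splitting $\psi_{5^+}$ --- is essentially the paper's and is fine, once the vertex identification is actually established.
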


\begin{proof}
If a simplicial automorphism fixes a simplex pointwise, then it fixes each vertex in that simplex. Conversely, since $\K_n$ is a flag complex, any simplicial automorphism that fixes each of the vertices in a simplex will fix the simplex they span.

So suppose that $[\alpha,\Lambda]$ is a vertex of $\K_n$ that is fixed by every element of $G$, i.e., for each subset $F' \subset F$,
\[
x_{1,F'} \cdot [\alpha, \Lambda] = [x_{1,F'} \alpha, \Lambda].
\]
By the definition of $\K_n$, this happens precisely when $\alpha^{-1} x_{1,F'} \alpha$ is carried by $\Lambda$. But the automorphisms carried by a hypertree are products of pairwise commuting partial conjugations from $\mathcal{P}^0$ (by Theorem \ref{thm:comhyp}), and these commuting products all project injectively into the abelianization of $\Out^0(W_n)$. Thus, $\alpha^{-1} x_{1,F'} \alpha$ must be equal to $x_{1,F'}$, i.e., $\alpha$ commutes with every $x_{1,F'}$.

Additionally, this implies that $\Lambda$ carries $x_{1,\{f\}}$ for each $f \in F$. Thus, $\{f\}$ must be a connected component of $\Lambda \setminus \{1\}$, i.e., $\{1,f\}$ is a hyperedge of $\Lambda$ for each $f \in F$. Therefore, $\Lambda = \widetilde{\Theta}$ for some $\Theta \in \widetilde{\HTc_4}$. 

Now, since $\alpha$ commutes with every $x_{1,F'}$, we claim that $\alpha \in \Out^0(W_4) \times G$. We will induct on the word length of $\alpha$. 

If $\alpha = x_{i,D}$, then we know that $1 \not\in D$ (by our naming convention for $D$). If $i \in F$, then $x_{i,D}$ will not commute with $x_{1,\{i\}}$ by Lemma \ref{lem:compc}, which contradicts our assumption. So $i \not\in F$. If $i \neq 1$, then since $1 \not\in D$, $i \not\in F$, for $x_{i,D}$ to commute with $x_{1,F}$, Lemma \ref{lem:compc} forces $D \cap F = \varnothing$, and so $x_{i,D} \in \Out^0(W_4)$. If $i = 1$, then $x_{i,D} = x_{1,D} x_{1,F'}$, where $D' \cap F = \varnothing$ and $F' \subset F$ (either might be empty). In that case, $x_{i,D}$ is again in $\Out^0(W_4) \times G$.  

Now, inductively assume that $\alpha = \alpha' x_{i,D}$, where $\alpha' = \beta x_{1,F''} \in \Out^0(W_4) \times G$. Then $x_{i,D} = \alpha^{-1} \beta x_{1,F''}$ also commutes with every $x_{1,F'}$. But then by the base case, $x_{i,D} \in \Out^0(W_4) \times G$, and thus so is $\alpha$.

Thus, we now have that every vertex in the fixed point set of $G$ is of the form $[\beta x_{1,F'}, \widetilde{\Theta}]$ for $\beta \in \Out^0(W_4)$ and $F' \subset F$. But since $\beta^{-1} \beta x_{1,F'} = x_{1,F'}$ is carried by each $\widetilde{\Theta}$, we have that in $\K_n$, $[\beta x_{1,F'}, \widetilde{\Theta}] = [\beta, \widetilde{\Theta}]$, and so the fixed point set of $G$ is generated by $[\Out^0(W_4), \widetilde{\HTc_4}]$. Since the carrying partial order of $\widetilde{\HTc_4}$ is isomorphic to $\HTc_4$, we have that the fixed point set of $G$ is a simplicially isomorphic copy of $\K_4$ which admits the same action of $\Out^0(W_4)$. By abuse of notation, we call this subcomplex $\K_4$.  
\end{proof}

Now we can prove the main theorem of the section. 

\begin{proof}[Proof of Theorem \ref{thm:noout0n}]
Suppose that for $\kappa \leq 0$, there existed an $\Out^0(W_n)$-equivariant $\cat(\kappa)$ $M_{\kappa}$-simplicial metric on $K_n$. Since there are only finitely many shapes, the metric is complete (Theorem 7.50 in \cite{BH}). Then the action by $\Out^0(W_n)$ would be by isometries, and so $G$ is a finite group of isometries of the complete $\cat(0)$ space $\K_n$, and so the fixed point set of $G$, namely $\K_4 \subset \K_n$ by Theorem \ref{thm:fix}, is a convex subspace of $\K_n$ (by Corollary 2.8 in \cite{BH}), and so would inherit a $\cat(0)$ $M_{\kappa}$-simplicial metric. Since the metric on $\K_n$ is $\Out^0(W_n)$-equivariant, and since $\Out^0(W_4)$ leaves $\K_4$ invariant, the induced metric on $\K_4$ is $\Out^0(W_4)$-equivariant as well. But this contradicts Theorem \ref{thm:noout0}.
\end{proof}

\section{Future Directions}

\label{sec:futres}

From Section \ref{sec:metmm4}, we know that the natural combinatorial model $\K_n$  of $\Out^0(W_n)$ cannot prove the group to be $\cat(0)$. So now we are left with two options.

\begin{enumerate}
\item \label{fut1} If $\Out^0(W_n)$ is $\cat(0)$, then we will need to investigate a different geometric model space in order to prove it. 
\item \label{fut2} If $\Out^0(W_n)$ is \emph{not} $\cat(0)$, then perhaps that can be detected with known invariants of $\cat(0)$ geometry. 
\end{enumerate}


Both options are interesting areas for future research. In particular, all $\cat(0)$ groups and $\cat(0)$ metric spaces are known to satisfy an at most quadratic isoperimetric inequality \cite{BH}. Since isoperimetric inequality is a quasi-isometry invariant, we can study it either directly in the group $\Out^0(W_n)$ or in the model $\K_n$ by endowing it with any $\Out^0(W_n)$-equivariant metric, such as by declaring every edge to have length 1 and then taking the induced path metric. This turns all simplices into equilateral Euclidean simplices. This metric won't be $\cat(0)$ as Theorem \ref{thm:noout0n} promises, but it is still quasi-isometric to $\Out^0(W_n)$ via the action, and so will have the same optimal class of isoperimetric inequalities. Thus, we wish to in the future compute the isoperimetric inequality of either $\K_n$ or else $\Out^0(W_n)$ directly by more combinatorial and geometric methods. In particular, we will need to find a normal form for $\Out^0(W_n)$ and calculate its algorithmic and combinatorial group theoretic properties.

\newpage
\bibliography{Cunningham}{}

\providecommand{\bysame}{\leavevmode\hbox to3em{\hrulefill}\thinspace}
\providecommand{\MR}{\relax\ifhmode\unskip\space\fi MR }
\providecommand{\MRhref}[2]{%
  \href{http://www.ams.org/mathscinet-getitem?mr=#1}{#2}
}
\providecommand{\href}[2]{#2}
\begin{thebibliography}{CEPR16}

\bibitem[BH99]{BH}
Martin~R. Bridson and Andr{\'e} Haefliger, \emph{Metric spaces of non-positive
  curvature}, Grundlehren der Mathematischen Wissenschaften [Fundamental
  Principles of Mathematical Sciences], vol. 319, Springer-Verlag, Berlin,
  1999. \MR{1744486 (2000k:53038)}

\bibitem[Bou68]{Bourbaki}
N.~Bourbaki, \emph{\'{E}l\'ements de math\'ematique. {F}asc. {XXXIV}. {G}roupes
  et alg\`ebres de {L}ie. {C}hapitre {IV}: {G}roupes de {C}oxeter et syst\`emes
  de {T}its. {C}hapitre {V}: {G}roupes engendr\'es par des r\'eflexions.
  {C}hapitre {VI}: syst\`emes de racines}, Actualit\'es Scientifiques et
  Industrielles, No. 1337, Hermann, Paris, 1968. \MR{0240238 (39 \#1590)}

\bibitem[Bri91]{Bridson}
Martin~R. Bridson, \emph{Geodesics and curvature in metric simplicial
  complexes}, Group theory from a geometrical viewpoint ({T}rieste, 1990),
  World Sci. Publ., River Edge, NJ, 1991, pp.~373--463. \MR{1170372
  (94c:57040)}

\bibitem[BV95]{BV}
Martin~R. Bridson and Karen Vogtmann, \emph{On the geometry of the automorphism
  group of a free group}, Bull. London Math. Soc. \textbf{27} (1995), no.~6,
  544--552. \MR{1348708}

\bibitem[BV01]{BV2}
\bysame, \emph{The symmetries of outer space}, Duke Math. J. \textbf{106}
  (2001), no.~2, 391--409. \MR{1813435}

\bibitem[CEPR16]{CEPR}
Charles Cunningham, Andy Eisenberg, Adam Piggott, and Kim Ruane,
  \emph{Recognizing right-angled {C}oxeter groups using involutions}, Pacific
  J. Math. \textbf{284} (2016), no.~1, 41--77. \MR{3530862}

\bibitem[CRSV10]{CRSV}
Ruth Charney, Kim Ruane, Nathaniel Stambaugh, and Anna Vijayan, \emph{The
  automorphism group of a graph product with no {SIL}}, Illinois J. Math.
  \textbf{54} (2010), no.~1, 249--262. \MR{2776995 (2012f:20106)}

\bibitem[Cun15]{mythesis}
Charles Cunningham, \emph{On the automorphism groups of universal right-angled
  {C}oxeter groups}, ProQuest LLC, Ann Arbor, MI, 2015, Thesis (Ph.D.)--Tufts
  University. \MR{3389067}

\bibitem[Dav08]{Davis}
Michael~W. Davis, \emph{The geometry and topology of {C}oxeter groups}, London
  Mathematical Society Monographs Series, vol.~32, Princeton University Press,
  Princeton, NJ, 2008. \MR{2360474 (2008k:20091)}

\bibitem[Dro87]{Droms}
Carl Droms, \emph{Isomorphisms of graph groups}, Proc. Amer. Math. Soc.
  \textbf{100} (1987), no.~3, 407--408. \MR{891135 (88e:20033)}

\bibitem[Ger94]{Gersten}
S.~M. Gersten, \emph{The automorphism group of a free group is not a {${\rm
  CAT}(0)$} group}, Proc. Amer. Math. Soc. \textbf{121} (1994), no.~4,
  999--1002. \MR{1195719}

\bibitem[GPR12]{GPR}
Mauricio Gutierrez, Adam Piggott, and Kim Ruane, \emph{On the automorphisms of
  a graph product of abelian groups}, Groups Geom. Dyn. \textbf{6} (2012),
  no.~1, 125--153. \MR{2888948}

\bibitem[Gre90]{Green}
Elisabeth Green, \emph{Graph products of groups}, Ph.D. thesis, University of
  Leeds, 3 1990.

\bibitem[Gro87]{Gromov}
M.~Gromov, \emph{Hyperbolic groups}, Essays in group theory, Math. Sci. Res.
  Inst. Publ., vol.~8, Springer, New York, 1987, pp.~75--263. \MR{919829
  (89e:20070)}

\bibitem[KL98]{KL}
M.~Kapovich and B.~Leeb, \emph{{$3$}-manifold groups and nonpositive
  curvature}, Geom. Funct. Anal. \textbf{8} (1998), no.~5, 841--852.
  \MR{1650098}

\bibitem[Lau93]{Lauthesis}
Michael~R. Laurence, \emph{Automorphisms of graph products of groups}, Ph.D.
  thesis, Queen Mary College, University of London, 1993.

\bibitem[Lau95]{Laurence}
\bysame, \emph{A generating set for the automorphism group of a graph group},
  J. London Math. Soc. (2) \textbf{52} (1995), no.~2, 318--334. \MR{1356145
  (96k:20068)}

\bibitem[MM96]{MM}
Darryl McCullough and Andy Miller, \emph{Symmetric automorphisms of free
  products}, Mem. Amer. Math. Soc. \textbf{122} (1996), no.~582, viii+97.
  \MR{1329943 (96k:20069)}

\bibitem[MM04]{McCammond}
Jon McCammond and John Meier, \emph{The hypertree poset and the {$l^2$}-{B}etti
  numbers of the motion group of the trivial link}, Math. Ann. \textbf{328}
  (2004), no.~4, 633--652. \MR{2047644 (2005b:20104)}

\bibitem[Mou88]{Moussong}
Gabor Moussong, \emph{Hyperbolic {C}oxeter groups}, ProQuest LLC, Ann Arbor,
  MI, 1988, Thesis (Ph.D.)--The Ohio State University. \MR{2636665}

\bibitem[M{\"u}h98]{Muhlherr}
Bernhard M{\"u}hlherr, \emph{Automorphisms of graph-universal {C}oxeter
  groups}, J. Algebra \textbf{200} (1998), no.~2, 629--649. \MR{1610676
  (98m:20048)}

\bibitem[Pig12]{Piggott}
Adam Piggott, \emph{The symmetries of {M}c{C}ullough-{M}iller space}, Algebra
  Discrete Math. \textbf{14} (2012), no.~2, 239--266. \MR{3099973}

\bibitem[PRW10]{PRW}
Adam Piggott, Kim Ruane, and Genevieve~S. Walsh, \emph{The automorphism group
  of the free group of rank 2 is a {CAT}(0) group}, Michigan Math. J.
  \textbf{59} (2010), no.~2, 297--302. \MR{2677622 (2011f:20083)}

\bibitem[Rad03]{Radcliffe}
David~G. Radcliffe, \emph{Rigidity of graph products of groups}, Algebr. Geom.
  Topol. \textbf{3} (2003), 1079--1088. \MR{2012965 (2004h:20056)}

\end{thebibliography}
\bibliographystyle{amsalpha}

\end{document}